\newtheorem{theorem}{Theorem}[section]
\newtheorem{lemma}[theorem]{Lemma}
\newtheorem{corollary}[theorem]{Corollary}
\newtheorem{proposition}[theorem]{Proposition}
\theoremstyle{remark}
\newtheorem{remark}[theorem]{Remark}
\theoremstyle{definition}
\newtheorem{definition}[theorem]{Definition}
\DeclareMathOperator{\Bor}{Bor}
\DeclareMathOperator{\Iso}{Iso}
\DeclareMathOperator{\JIso}{J-Iso}
\DeclareMathOperator{\Spec}{Spec}
\DeclareMathOperator{\sgn}{sgn}
\DeclareMathOperator*{\esssup}{ess\,sup}
\newcommand{\dt} {\partial_t}
\newcommand{\grad} {\nabla_{\!z}}
\newcommand{\X} {{\mathbb{R}^d}}
\newcommand{\R} {\mathbb{R}}
\newcommand{\N} {\mathbb{N}}
\newcommand{\T} {[0,\infty)}
\newcommand{\FT}[1] {\mathcal{F}\big\{ #1 \big\}}
\renewcommand{\phi} {\varphi}
\renewcommand{\=} {\overset{d}{=}}
\newcommand{\viii} {|\kern-0.25ex|\kern-0.25ex|}
\newcommand{\K}[2] {\viii #1 \viii_{K,#2}}
\newcommand{\Lp} {{L^p(\Omega)}}
\newcommand{\Ld} {{L^2(\Omega)}}
\newcommand{\fLap} {(-\Delta)^s}
\begin{document}
\title[]{Convection-Diffusion Equations with Random Initial Conditions}
\author{Mi\l{}osz Krupski}
\address{Uniwersytet Wroc\l{}awski, Instytut Matematyczny\\ pl. Grunwaldzki 2/4, 50-384 Wroc\l{}aw}
\email{milosz.krupski@math.uni.wroc.pl}
 

  \begin{abstract}
We consider an evolution equation generalising the viscous Burgers equation
supplemented by an initial condition which is a homogeneous random field.
We develop a non-linear framework enabling us to show the existence
and regularity of solutions as well as their long time behaviour.
  \end{abstract}
\maketitle
  \begin{section}{Introduction}
   The main objective of this paper is to investigate solutions to a non-local
   analogue of the viscous Burgers equation with random initial conditions
      \begin{equation}\label{intro}
      \left\{
      \begin{aligned}
        &\dt u + \fLap u = \grad f(u)\quad&\text{on $\T\times\X$}, \\
        &u(0) \= u_0\quad&\text{on $\X$}.
      \end{aligned}
      \right.
    \end{equation}
   Here the operator $-\fLap$ denotes the (now) standard fractional Laplace operator
   with $s\in(\frac{1}{2},1]$
   and the initial condition $u_0$ is a real, homogeneous, isotropic random field (as defined in Section~\ref{preliminaries})
   whose finite-order moments are all bounded.
   By $\grad$ we denote the directional derivative. The function $f$ is a smooth function of polynomial growth.
   
   Such equations have been studied thoroughly in the deterministic case. In papers~\cite{MR1708995,MR1881259,MR1849690} the authors consider
   initial conditions which are integrable functions and describe certain properties of solutions that resonate
   with some of the results presented here. However, in the context of random initial data new
   methods had to be developed. Important results were obtained for bounded (deterministic) initial conditions~\cite{MR2019032}
   and we rely on them where possible (see Lemmas~\ref{classical-solutions}, \ref{classical-solutions-equiv}).
   Interesting developments were recently described in \cite{2017arXiv170302908I}.
   
   In general, partial differential equations with random initial data (homogeneous, stationary, isotropic, etc.) have been
   examined before, notably to describe certain physical phenomena, such
   as the Large Scale Structure of the Universe~\cite{MR1305783, MR1732301, MannJr.2001}.
   
   More specifically, there are numerous results available on the local Burgers equation (i.e. equation~\eqref{intro} with $s=1$ and $f(u)=u^2$) or very similar equations with random initial data
   \cite{MR1859007,MR1939652,MR1978661,MR1687092,MR1642664}. Equations of this type, however, have a curious property, exploited by the Hopf-Cole transformation, allowing them to be treated essentially as linear problems. Such a reduction is not known to be possible
   both in the non-local setting (i.e. with the fractional Laplacian) and for a more general
   function $f$, 
   which forces us to conduct a more involved non-linear analysis of the problem. 
   
   A significant part of this analysis are a~priori estimates, in the context of random initial data. Early results
   were already obtained by Rosenblatt~\cite{MR0264252} (in the local setting).
   Taking them as an inspiration we obtained new and much more general estimates.
   In fact, in Theorem~\ref{rosenblatt-lp} we prove that for a solution $u(t)$
   to problem~\eqref{intro} we have $E|u(t)|^p \leq E|u_0|^p$ for every $t\geq 0$.
   
   One difficulty when working with random fields is the question of regularity of its individual realisations.
   As it turns out, it is impossible to directly apply the classical theory ``pathwise'', treating $x\mapsto u_0(x,\omega)$
   as an initial condition of the non-random problem for every $\omega\in\Omega$ separately (cf. Proposition~\ref{iso-infinity} and Remark~\ref{l-infinity}).
   
   On the other hand, restricting the problem to admit only homogeneous random fields has a technical
   advantage. 
   By calculating the expected value $E$ not only we ``eliminate'' the variable $\omega$,
   as is normally the case, but also $x$ (because the field has identical distribution
   at every point in space the result is constant, see Remark~\ref{independent}).
   However simple, this property is essential to obtain e.g. the analogue of the Stroock-Varopoulos 
   inequality (see \cite{MR1218884,MR1409835}) in Lemma~\ref{cut-off-reg}.
   Another important observation is expressed by Lemma~\ref{derivative-zero} and we invite the readers to turn their attention to it.
   
   The remainder of the paper is structured as follows. After introducing the notation and basic concepts in Section~\ref{preliminaries}, in Section~\ref{linear-equation} we review the 
   results on the linear equation, which is the starting point for the rest of the theory. In Section~\ref{sec-lipschitz} we construct solutions in the case when the function $f$ is assumed to be Lipschitz. In Section~\ref{apriori} we 
   are able to establish the a~priori estimates. 
   Finally, in Section~\ref{general} we study the problem for functions $f$ of polynomial growth.
   The main result on the existence of solutions to problem~\eqref{intro} is expressed
   in Theorem~\ref{main}.
   
   This paper is a continuation of previous work~\cite{MR3628179} which dealt with linear problems.
  \end{section}
  \begin{section}{Isometry-invariant random fields}\label{preliminaries}
    \begin{subsection}{Basic notation}
      We denote the Borel sigma-algebra on $\X$ by $\Bor(\X)$ and the Lebesgue measure by $dx$.
      We use the Fourier transform defined as
      \begin{equation*}
        \FT{f}(\xi) = \int_\X e^{i\xi\cdot x}f(x)\,dx.
      \end{equation*}
      
      Given a measure space $(X,\Theta,\mu)$, by $L^p(X,\Theta,\mu)$ we denote the space of all $\Theta$-measurable real functions such that the norm defined in the case $1\leq p < \infty$ as
      \begin{equation*}
        \|f(x)\|_{L^p(X,\Theta,\mu)}^p = \int_X |f|^p d\mu = \int_X |f(x)|^p \mu(dx),
      \end{equation*}
      or in the case of $p=\infty$ as the value 
      \begin{equation*}
        \|f(x)\|_{L^\infty(X,\Theta,\mu)} = \esssup_{x\in X}|f| = \inf\big\{y\in X: \mu\{x\in X:|f(x)|>y\} = 0\big\},
      \end{equation*}
      is finite. Usually we shorten the notation and write $L^p(X,\mu)$.
      
      Let $(X,\|\cdot\|_X)$ be a subset of a normed space. For every fixed $K\geq0$ we may define the Bielecki norm
  \begin{equation}\label{bielecki}
      \K{u}{X} = \sup_{t\geq0}e^{-tK}\|u(t)\|_X
   \end{equation}
  and the space
  \begin{equation*}
   \mathcal{B}_{K,X} = \big\{u\in C(\T,X) : \K{u}{X} < \infty\big\}.
  \end{equation*}
      
      Let us fix a probability space $(\Omega,\Sigma,P)$ and denote $\Lp = L^p(\Omega,\Sigma,P)$.
      The particular case of $p=2$ constitutes a
      Hilbert space with the standard inner product $EXY$ defined for all $X,Y\in \Ld$.
      We write
      \begin{equation*}
        (X_1,\ldots,X_k) \= (Y_1,\ldots,Y_k),\quad\text{where $X_i,Y_i\in \Lp$ and $k\in\N$},
      \end{equation*}
      if both random vectors have the same probability distributions.
    \end{subsection}
\begin{subsection}{Random fields}
  Consider $\Lp$-continuous and $\Lp$-bounded random field $u$ on $\X$, i.e. $u\in C_b(\X,\Lp)$.
  We endow this space with the norm
  \begin{equation*}
    \|u\|_p = \sup_{x\in\X}\|u(x)\|_{\Lp}.
  \end{equation*}
  For $u,v\in C_b(\X,\Lp)$ we say that $u\=v$ if
  \begin{equation*}
    \big(u(x_1),\ldots,u(x_n)\big) \= \big(v(x_1),\ldots,v(x_n)\big)
  \end{equation*}
  as random vectors for every finite collection of points $x_1,\ldots,x_n\in \X$.
  \begin{definition}
  Let $1\leq p \leq \infty$. Consider the group $\Phi$ of isometries on $\X$ and a random field $u\in C_b(\X,\Lp)$.
  For a given function $\phi\in\Phi$ let $(\phi u)(x) =  u(\phi(x))$.
  We define the space $\Iso_p$ of \emph{isometry-invariant random fields} with finite $p$-th moment as
  \begin{equation*}
      \Iso_p = \{u\in C_b(\X,\Lp): u \= \phi u \text{ for every $\phi\in\Phi$}\}.
  \end{equation*}
  \end{definition}
  \begin{remark}
  Notice that the space $\Iso_p$ inherits the norm (and hence topology) of the space $C_b(\X,\Lp)$
  and it forms a closed subspace therein. Indeed, fix an isometry $\phi\in\Phi$ and a sequence $u_n\in\Iso_p$
  such that $\lim_{n\to\infty}\|u_n-u\|_p = 0$.
  Then $u_n\=\phi u_n$ and because
  \begin{equation*}
   \|\phi u_n-\phi u\|_p= \sup_{x\in\X}\|u_n(\phi(x))-u(\phi(x))\|_\Lp= \|u_n-u\|_p
  \end{equation*}
we have $u \= \phi u$, hence $u\in\Iso_p$.
  This also entails completeness.
  \end{remark}
  \begin{remark}\label{independent}
  Notice that the property $u\= \phi u$ implies that neither $\|u(x)\|_p$ nor $Eu(x)$ depend on the $x$ variable. 
  \end{remark}
  \begin{remark}
  The space $\Iso_p$ is not linear. For a simple example, let $X,Y$ be two independent random variables with identical distribution $\mathcal{N}(0,1)$.
  Define random fields $u,v\in C_b(\R,\Ld)$ as $u(x) = X$ and $v(x) = \sin(x)X + \cos(x)Y$.
  Notice that $u,v\in\Iso_2$. However, by calculating 
  \begin{equation*}
  E(u(x)+v(x))^2 = 2 \sin(x)+2
  \end{equation*}
  we discover that $u+v\notin\Iso_2$ (cf. Remark~\ref{independent}).
 \end{remark}
  In the particular case of $p=2$, for every random field $u\in\Iso_2$ we have $Eu(x)u(y) = B(|x-y|)$ for a certain positive definite function $B\in C(\R,\R)$. Moreover we have the following representation
  \begin{equation}\label{spectral-representation}
   u(x) = Eu(x) + \int_\X e^{ix\cdot\xi}Z(d\xi),
  \end{equation}
  where $Z$ is an orthogonal random measure (see~\cite[Chapter 1]{MR1009786} for specific results as well as \cite{MR0173945, MR0247660}, or \cite{MR2840012} for general theory of random measures).
  There also exists a finite measure $\sigma$ such that
   \begin{equation*}
    E(u(x))^2-(Eu(x))^2 = E\Big(\int_\X e^{ix\cdot\xi}Z(d\xi)\Big)^2 = \int_\X \sigma(d\xi) = \sigma(\X).
   \end{equation*}
   In this case we denote $\sigma = \Spec(u)$.
   One may also observe that $\mathcal{F}(\sigma)= B$.
   
   As in the general case~\eqref{bielecki}, for every $K\geq0$ 
  we define the Bielecki norm and the corresponding space
  \begin{align*}
   &\K{u}{p} = \sup_{t\geq0}e^{-tK}\|u(t)\|_p,
   &\!\mathcal{B}_{K,p} = \big\{u\in C(\T,\Iso_p) : \K{u}{p} < \infty\big\}.
  \end{align*}
     We use these to introduce the spaces of \emph{jointly isometry-invariant random fields}.
  \begin{definition}\label{jiso-def}
  Given an isometry $\phi\in\Phi$ and $u\in C(\T,\Iso_p)$ let us set $(\phi u)(t,x) = u(t,\phi(x))$.
  For $K\geq0$ and $p\geq1$ we define
  \begin{equation*}
      \JIso_{K,p} = \big\{u\in \mathcal{B}_{K,p} : u \= \phi u\ \text{for every $\phi\in\Phi$}\big\}.
  \end{equation*}
  \end{definition}
  Similarly to the case of $\Iso_p$, the space $\JIso_{K,p}$ is not linear, but it is complete in the norm we defined. In order to compensate for the lack of linearity we consider the following relation.
  \begin{definition}
    Let $u,v\in \JIso_{K,p}$ or $u,v\in\Iso_p$.
    We say that $u\sim v$ if for every isometry $\phi\in\Phi$ we have $(u,v) \= (\phi u,\phi v)$.
  \end{definition}
  It immediately implies that when $u,v\in\JIso_{K,p}$ such that $u\sim v$,
  we have $\alpha u + \beta v\in\JIso_{K,p}$ for every $\alpha,\beta\in\R$.
  The same observation holds in the space $\Iso_p$. Keep in mind that this
  relation is not transitive.
  \begin{remark}
   Let $u,v\in\JIso_{K,p}$ or $u,v\in\Iso_p$ and $u\sim v$, and take two measurable functions $f$ and $g$.
   Notice that because $(u,v)\= (\phi u, \phi v)$, we also have (see~\cite[Theorem 25.7]{MR1324786})
   \begin{equation*}
    (f(u,v),g(u,v))\= (f(\phi u,\phi v),g(\phi u,\phi v))
   \end{equation*}
   and therefore $f(u,v)\sim g(u,v)$.
   By similar arguments we have $f(u)\sim f(v)$ and $f(u)\sim g(u)$.
  \end{remark}
  \begin{remark}
  Let $u\in\JIso_{K,p}\cap C^1(\T,C_b(\X,\Lp))$ and $t,h\geq0$. Notice that $u(t)\sim u(t+h)$, hence $u(t+h)-u(t)\in\Iso_p$. 
    We may consider 
    \begin{equation}\label{derivative}
      \dt u (t) = \lim_{h\to 0} \frac{u(t+h)-u(t)}{h},
    \end{equation}
    where the limit is taken in the sense of the norm $\|\,\cdot\,\|_p$, and we have
    $\phi (\dt u) \= \dt u$ for every isometry $\phi\in\Phi$.
  \end{remark}
  \begin{definition}
   Let $u\in C_b(\X,\Lp)$. For a given $z\in\X$ we define the (normalised) directional derivative $\grad u$ as the random field such that
   \begin{equation}\label{derivative-def}
    \lim_{h\to0}\Big\|\frac{u(x+hz)-u(x)}{h|z|}-\grad u(x)\Big\|_p = 0, 
   \end{equation}
   whenever the limit exists.
   We say that $u\in C^1_b(\X,\Lp)$ if for every $z\in\X$ we have $\nabla_z u\in C_b(\X,\Lp)$.
  \end{definition}
  \begin{remark}\label{spectral-derivative}
Suppose that $u\in\Iso_2$ has the repesentation
$u(x)=Eu(x)+\int_\X e^{ix\cdot\xi}Z(d\xi)$ and $\grad u$ exists. It follows directly from identity~\eqref{derivative-def} that
 we have
\begin{equation*}
  \grad u(x) = \int_\X i\tfrac{z}{|z|}\cdot\xi\,e^{ix\cdot\xi}\,Z(d\xi).
\end{equation*}
\end{remark}
  \begin{lemma}\label{derivative-zero}
Let $u,v\in\Iso_2$ and $u\sim v$. If $u\in C^1_b(\X,\Ld)$ then for every $z\in\X$ we have
\begin{equation*}
E\grad u(x)v(x) = 0. 
\end{equation*}
\end{lemma}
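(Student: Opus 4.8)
The plan is to use the joint isometry-invariance $u\sim v$ to reduce the mixed second moment $Eu(x)v(y)$ to a function of $|x-y|$ alone, and then to recognise $E\grad u(x)v(x)$ as a one-sided derivative at $0$ of that function, which is forced to vanish because the corresponding two-sided limit exists.

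\textbf{Step 1: a radial mixed covariance.} For an isometry $\phi\in\Phi$ and points $x,y\in\X$, applying the relation $u\sim v$ to the two-point collection $\{x,y\}$ gives $(u(x),v(y))\=(u(\phi(x)),v(\phi(y)))$, hence $Eu(x)v(y)=Eu(\phi(x))v(\phi(y))$; all these expectations are finite since $u,v\in C_b(\X,\Ld)$. Because the isometry group of $\X$ acts transitively on ordered pairs of points with a prescribed mutual distance, there is a function $C\colon\T\to\R$ such that $Eu(x)v(y)=C(|x-y|)$ for all $x,y\in\X$.

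\textbf{Step 2: passing to the limit.} Fix $x\in\X$ and $z\in\X\setminus\{0\}$. By the Cauchy--Schwarz inequality in $\Ld$,
\begin{equation*}
\Big|\,E\grad u(x)v(x)-E\tfrac{u(x+hz)-u(x)}{h|z|}\,v(x)\,\Big|\le\Big\|\tfrac{u(x+hz)-u(x)}{h|z|}-\grad u(x)\Big\|_2\,\|v(x)\|_2,
\end{equation*}
which converges to $0$ as $h\to0$ because $u\in C^1_b(\X,\Ld)$, cf. \eqref{derivative-def}. Consequently
\begin{equation*}
E\grad u(x)v(x)=\lim_{h\to0}\frac{Eu(x+hz)v(x)-Eu(x)v(x)}{h|z|}=\lim_{h\to0}\frac{C(|h|\,|z|)-C(0)}{h|z|},
\end{equation*}
where we used $|(x+hz)-x|=|h|\,|z|$; in particular this two-sided limit exists.

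\textbf{Step 3: the sign argument.} Putting $r=|h|\,|z|>0$, the one-sided limit as $h\to0^+$ equals $D:=\lim_{r\to0^+}\tfrac{C(r)-C(0)}{r}$, while the one-sided limit as $h\to0^-$ equals $-D$. Since by Step~2 the two-sided limit exists, we must have $D=-D$, hence $D=0$ and therefore $E\grad u(x)v(x)=0$. The only delicate points are the transitivity property invoked in Step~1 (elementary for Euclidean space) and checking that the limit in Step~2 is genuinely two-sided — which is exactly what the hypothesis $u\in C^1_b(\X,\Ld)$ provides — so that the symmetry exploited in Step~3 is legitimate. (Alternatively, one could argue via the cross-spectral measure of the pair $(u,v)$, which is isotropic by $u\sim v$, so that the odd factor $i\tfrac{z}{|z|}\cdot\xi$ integrates to zero, cf. Remark~\ref{spectral-derivative}; the direct computation above avoids setting up that cross-spectral theory.)
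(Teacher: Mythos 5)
Your proof is correct and follows essentially the same route as the paper: you use the joint invariance $u\sim v$ to make the mixed covariance a function of $|x-y|$ (hence even in the increment $h$), and then the existence of the two-sided limit defining $\grad u$ forces the derivative at $0$ to vanish. The paper phrases this identically in substance, reducing via translation to $Eu(hz)v(0)=Eu(-hz)v(0)$ and concluding $E\grad u(x)v(x)=E\nabla_{-z}u(x)v(x)=-E\grad u(x)v(x)$.
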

\begin{proof}
Since we assume $u\sim v$, hence for every $x,y\in\X$ we have $Eu(y)v(0) = Eu(-y)v(0)$
and $Eu(x+y)v(x)=Eu(y)v(0)$.
Therefore
 \begin{multline*}
   E\grad u(x)v(x)
   = \lim_{h\to0} E \frac{u(x+hz)-u(x)}{h|z|}v(x)
   = \lim_{h\to0} E \frac{u(hz)-u(0)}{h|z|}v(0) \\
   = \lim_{h\to0} E \frac{u(-hz)-u(0)}{h|z|}v(0) = E\nabla_{-z}
    u(x)v(x).
 \end{multline*}
 On the other hand we have $\grad u  =-\nabla_{-z}u$, hence $E\grad u(x)v(x)=0$.
\end{proof}

\end{subsection}
  \begin{subsection}{Spectral moments}
   We introduce Sobolev-type spaces of isotropic random fields.
 \begin{definition}
  For every $\alpha\geq0$ we define the space
  \begin{equation*}
   \Iso^\alpha_2  = 
   \Big\{ u_0 \in \Iso_2 : \int_\X|\xi|^{2\alpha}d\sigma(\xi) < \infty,\ \text{where $\sigma=\Spec(u_0)$}\Big\}
  \end{equation*}
  supplemented with the norm $\|u_0\|_{\alpha,2}^2 = \int_\X \big(1+|\xi|^{2\alpha}\big)d\sigma(\xi)$. 
 \end{definition}
 \begin{proposition}\label{iso-c1}We have the following embeddings
 \begin{enumerate}
  \item $\Iso_2^1\subset C^1_b(\X,\Ld)$
  \item $\Iso^\alpha_2\subset\Iso^\beta_2$ for every $\alpha\geq\beta$. 
 \end{enumerate}
 \end{proposition}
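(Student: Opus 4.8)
The plan is to treat the two inclusions separately; the second is elementary and the first rests on the spectral representation~\eqref{spectral-representation}. For part (2), I would fix $u_0\in\Iso_2^\alpha$ with $\sigma=\Spec(u_0)$ and $\alpha\ge\beta\ge0$, and split $\int_\X|\xi|^{2\beta}\,d\sigma(\xi)$ over $\{|\xi|\le1\}$ and $\{|\xi|>1\}$: on the first set $|\xi|^{2\beta}\le1$ while $\sigma$ is finite, and on the second $|\xi|^{2\beta}\le|\xi|^{2\alpha}$. Hence $\int_\X|\xi|^{2\beta}\,d\sigma(\xi)\le\sigma(\X)+\int_\X|\xi|^{2\alpha}\,d\sigma(\xi)<\infty$, so $u_0\in\Iso_2^\beta$; the same estimate also bounds $\|u_0\|_{\beta,2}$ by a constant multiple of $\|u_0\|_{\alpha,2}$.

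For part (1), let $u_0\in\Iso_2^1$ and fix $z\in\X$. By Remark~\ref{independent} the mean $Eu_0(x)$ does not depend on $x$, so it cancels in the difference quotient and, by~\eqref{spectral-representation},
\[
  \frac{u_0(x+hz)-u_0(x)}{h|z|}=\int_\X\frac{e^{ihz\cdot\xi}-1}{h|z|}\,e^{ix\cdot\xi}\,Z(d\xi).
\]
The idea is to compare this with the candidate $\grad u_0(x)=\int_\X i\tfrac{z}{|z|}\cdot\xi\,e^{ix\cdot\xi}\,Z(d\xi)$ suggested by Remark~\ref{spectral-derivative}. Applying the spectral isometry $\|\int_\X g(\xi)e^{ix\cdot\xi}Z(d\xi)\|_{\Ld}^2=\int_\X|g(\xi)|^2\,d\sigma(\xi)$ to the difference reduces the claim to
\[
  \int_\X\Big|\frac{e^{ihz\cdot\xi}-1}{h|z|}-i\tfrac{z}{|z|}\cdot\xi\Big|^2\,d\sigma(\xi)\xrightarrow[h\to0]{}0 .
\]

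To obtain this I would invoke the elementary bounds $|e^{i\tau}-1-i\tau|\le\tfrac{\tau^2}{2}$ and $|e^{i\tau}-1|\le|\tau|$ with $\tau=hz\cdot\xi$ (and $|z\cdot\xi|\le|z|\,|\xi|$): the first gives pointwise convergence of the integrand to $0$ as $h\to0$, and the two together dominate it by $4|\xi|^2$, which is $\sigma$-integrable precisely because $u_0\in\Iso_2^1$. Dominated convergence then yields the existence of $\grad u_0(x)$ in $\Ld$ together with its spectral formula. It remains to check $\grad u_0\in C_b(\X,\Ld)$ using the same isometry: $\|\grad u_0(x)\|_{\Ld}^2=\int_\X|\tfrac{z}{|z|}\cdot\xi|^2\,d\sigma(\xi)\le\int_\X|\xi|^2\,d\sigma(\xi)<\infty$ independently of $x$ (boundedness), while $\|\grad u_0(x)-\grad u_0(y)\|_{\Ld}^2=\int_\X|\tfrac{z}{|z|}\cdot\xi|^2\,|e^{ix\cdot\xi}-e^{iy\cdot\xi}|^2\,d\sigma(\xi)\to0$ as $y\to x$, again by dominated convergence with the integrable majorant $4|\xi|^2$ (continuity). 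Since $z\in\X$ was arbitrary, $u_0\in C^1_b(\X,\Ld)$.

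The only genuinely delicate point I anticipate is the justification of the spectral isometry for these complex-valued integrands in the setting of the real field $u_0$ — that is, that the stochastic integrals written above are well-defined elements of $\Ld$ and satisfy the Plancherel-type identity; this follows from the symmetry of $\sigma$ and the Hermitian symmetry of the integrands and is standard in the theory referenced after~\eqref{spectral-representation}. Everything else in the argument is a routine application of dominated convergence.
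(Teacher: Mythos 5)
Your proof is correct, and it is worth noting where it diverges from the paper. For part (2) the paper applies the Jensen inequality to the concave function $\psi(r)=r^{\beta/\alpha}$ against the normalised finite measure $\sigma/\sigma(\X)$, whereas you simply split the integral over $\{|\xi|\le1\}$ and $\{|\xi|>1\}$ to get $\int_\X|\xi|^{2\beta}\,d\sigma\le\sigma(\X)+\int_\X|\xi|^{2\alpha}\,d\sigma$; both yield $\|u_0\|_{\beta,2}\le C\|u_0\|_{\alpha,2}$, but your version is more elementary and does not even need to exclude the degenerate case $\sigma(\X)=0$, where the paper's normalisation is vacuous. For part (1) you follow the same spectral route as the paper (representation \eqref{spectral-representation} plus the isometry $\|\int g\,e^{ix\cdot\xi}Z(d\xi)\|_{\Ld}^2=\int|g|^2d\sigma$), but you are more complete: the paper quotes the formula of Remark~\ref{spectral-derivative}, which presupposes that $\grad u_0$ exists, and only computes the bound $E(\grad u_0)^2\le\|u_0\|_{1,2}^2$, leaving existence and continuity to ``a similar calculation''; you actually construct $\grad u_0$ as the $\Ld$-limit of the difference quotients via the bounds $|e^{i\tau}-1-i\tau|\le\tau^2/2$ and $|e^{i\tau}-1|\le|\tau|$ and dominated convergence with majorant $4|\xi|^2$, which is exactly what the assumption $u_0\in\Iso_2^1$ provides (and is in the spirit of Lemma~\ref{derivative-estimate}). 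One small point you could make explicit: the derivative in \eqref{derivative-def} is required in the norm $\|\cdot\|_2=\sup_{x}\|\cdot\|_{\Ld}$, but since your spectral expression for the error is independent of $x$, the pointwise-in-$x$ convergence you prove is automatically uniform, so nothing is missing.
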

 \begin{proof}
  Let $\sigma$ be the spectral measure and $Z$ be the orthogonal random measure corresponding to an arbitrary
  $u_0\in\Iso_2$.
  
  Suppose $u_0\in\Iso^1_2$ and take an arbitrary $z\in\X$.
  Then it follows from the Cauchy-Schwarz inequality and Remark~\ref{spectral-derivative} that
  \begin{multline*}
   E(\grad u_0(x))^2 = E\,\Big(\int_\X i\tfrac{z}{|z|}\cdot\xi\,e^{ix\cdot\xi}\,Z(d\xi)\Big)^2 \\
   = \int_\X \big|i\tfrac{z}{|z|}\cdot\xi\big|^2 d\sigma(\xi)
   \leq \int_\X|\xi|^{2}d\sigma(\xi) \leq\|u_0\|_{1,2}^2.
  \end{multline*} The continuity follows from a similar calculation and the Lebesgue dominated convergence theorem.
  This concludes the proof of the first inclusion.
  
  To prove the second claim it suffices to notice that the measure $\sigma$ is finite and for $\alpha \geq \beta$ the function $\psi(|x|) = |x|^{\beta/\alpha}$ is concave. Therefore by the Jensen inequality
  we have
  \begin{equation*}
   \frac{1}{\sigma(\X)}\int_\X\psi(|\xi|^{2\alpha})d\sigma(\xi)\leq\psi\Big(\frac{1}{\sigma(\X)}\int_\X|\xi|^{2\alpha}d\sigma(\xi)\Big),
  \end{equation*}
  hence $\|u_0\|_{\beta,2}\leq C\|u_0\|_{\alpha,2}$ for some constant $C>0$.
 \end{proof}
    \begin{lemma}\label{derivative-estimate}
     Let $u\in\Iso_2$. We have $u\in\Iso^1_2$ if and only if there exist $c>0$ and $\epsilon>0$ such that 
      \begin{equation}\label{derivative-estimate-eq}
        E\Big|\frac{u(x+hz)-u(x)}{h|z|}\Big|^2 \leq c
      \end{equation}
      for every $h\in(0,\epsilon)$ and every $z\in\X$.
    \end{lemma}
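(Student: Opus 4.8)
The plan is to move everything to the spectral side, where the statement reduces to a one-line bound on $\int_\X|\xi|^2\,d\sigma(\xi)$ with $\sigma=\Spec(u)$. First I would use the spectral representation $u(x)=Eu(x)+\int_\X e^{ix\cdot\xi}\,Z(d\xi)$: the (constant) mean drops out of the increment $u(x+hz)-u(x)$, and orthogonality of $Z$ together with $|e^{i\theta}-1|^2=4\sin^2(\theta/2)$ gives
\begin{equation*}
  E\Big|\frac{u(x+hz)-u(x)}{h|z|}\Big|^2
  = \frac{1}{h^2|z|^2}\int_\X\bigl|e^{ihz\cdot\xi}-1\bigr|^2\,d\sigma(\xi)
  = \int_\X\frac{4\sin^2\!\bigl(\tfrac12\,hz\cdot\xi\bigr)}{h^2|z|^2}\,d\sigma(\xi).
\end{equation*}
Note the right-hand side is independent of $x$, exactly as Remark~\ref{independent} demands.

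For the ``only if'' direction I would assume $u\in\Iso^1_2$ and bound the integrand pointwise: from $4\sin^2(\theta/2)\le\theta^2$ and the Cauchy--Schwarz inequality,
\begin{equation*}
  \frac{4\sin^2\!\bigl(\tfrac12\,hz\cdot\xi\bigr)}{h^2|z|^2}\le\frac{(z\cdot\xi)^2}{|z|^2}\le|\xi|^2,
\end{equation*}
so the quantity above is at most $\int_\X|\xi|^2\,d\sigma(\xi)\le\|u\|_{1,2}^2$ for \emph{every} $h>0$ and $z\in\X$; one may then take any $\epsilon>0$ together with $c=\|u\|_{1,2}^2$.

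For the ``if'' direction I would specialise the hypothesis~\eqref{derivative-estimate-eq} to the coordinate directions $z=e_j$, $j=1,\dots,d$. Since $\tfrac{4}{h^2}\sin^2(\tfrac12 h\xi_j)\to\xi_j^2$ pointwise as $h\to0^+$, Fatou's lemma yields $\int_\X\xi_j^2\,d\sigma(\xi)\le c$ for each $j$, and summing over $j$ gives $\int_\X|\xi|^2\,d\sigma(\xi)\le dc<\infty$, that is, $u\in\Iso^1_2$. I expect this last step to be the only point needing care: one must not try to reverse the inequality $4\sin^2(\theta/2)\le\theta^2$, which fails for $|\theta|$ large, but instead pass to the limit via Fatou (equivalently, truncate to $|\xi|\le R$, use $\sin^2(\theta/2)\gtrsim\theta^2$ valid for $|\theta|\le\pi$ with $h$ chosen small depending on $R$, and let $R\to\infty$ by monotone convergence). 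Everything else is routine manipulation of the orthogonal random measure.
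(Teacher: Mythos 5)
Your proposal is correct and follows essentially the same route as the paper: pass to the spectral side via the representation $u(x)=Eu(x)+\int_\X e^{ix\cdot\xi}Z(d\xi)$, bound the integrand by $|\xi|^2$ using $|e^{i\theta}-1|\le|\theta|$ for the ``only if'' direction, and use Fatou's lemma for the converse. In fact your converse is slightly more careful than the paper's: for a fixed $z$ the pointwise limit of the integrand is $\bigl|\tfrac{z}{|z|}\cdot\xi\bigr|^2$ rather than $|\xi|^2$, a point the paper glosses over and which your device of taking $z=e_j$, applying Fatou coordinatewise and summing (at the harmless cost of the constant $dc$) handles cleanly.
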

    \begin{proof}Let $\sigma=\Spec(u)$. Since we assume $u\in\Iso_2$, we have
     \begin{multline}\label{spectral-something}
        E\Big|\frac{u(x+hz)-u(x)}{h|z|}\Big|^2 = E\Big|\int_\X\frac{e^{i(x+hz)\cdot\xi}-e^{ix\cdot\xi}}{h|z|}Z(d\xi)\Big|^2 \\
        = \int_\X\Big|\frac{e^{i(x+hz)\cdot\xi}-e^{ix\cdot\xi}}{h|z|}\Big|^2d\sigma(\xi).
      \end{multline}
      Suppose that \eqref{derivative-estimate-eq} holds for some $c>0$, every $h\in(0,\epsilon)$ and every $z\in\X$. The Fatou lemma gives us
      \begin{equation*}
       \int_\X |\xi|^2 d\sigma(\xi) \leq \liminf_{h\to0}\int_\X\Big|\frac{e^{i(x+hz)\cdot\xi}-e^{ix\cdot\xi}}{h|z|}\Big|^2d\sigma(\xi),
      \end{equation*}
      which implies
      \begin{equation*}
       \int_\X |\xi|^2 d\sigma(\xi) \leq c,
      \end{equation*}
     hence $u\in\Iso^1_2$.
      Now suppose that $u\in\Iso^1_2$. By definition we have $\int_\X|\xi|^{2}d\sigma(\xi) = \|u\|_{1,2}^2- \|u\|_{2}^2$.
      Because of a simple inequality $|e^{ix\cdot y}-1|\leq|x\cdot y|\leq|x||y|$ for every $x,y\in\X$, we have
      \begin{equation*}
       \Big|\frac{e^{i(x+hz)\cdot\xi}-e^{ix\cdot\xi}}{h|z|}\Big|^2\leq|\xi|^2
      \end{equation*}
      and therefore
      \begin{equation*}
       \int_\X\Big|\frac{e^{i(x+hz)\cdot\xi}-e^{ix\cdot\xi}}{h|z|}\Big|^2d\sigma(\xi)
       \leq \|u\|_{1,2}^2- \|u\|_{2}^2.
      \end{equation*}
      Combining this with \eqref{spectral-something} we get the estimate \eqref{derivative-estimate-eq}.
    \end{proof}
    \begin{corollary}\label{lipschitz}
     If $u\in\Iso^1_2$ and $f:\R\to\R$ is Lipschitz then $f(u)\in\Iso^1_2$.
    \end{corollary}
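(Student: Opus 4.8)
The plan is to reduce the claim to the difference-quotient criterion of Lemma~\ref{derivative-estimate}, which avoids any direct manipulation of the spectral measure of $f(u)$. Throughout, let $L$ denote the Lipschitz constant of $f$.

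First I would verify that $f(u)\in\Iso_2$, since this is a prerequisite for applying Lemma~\ref{derivative-estimate}. The bound $|f(a)-f(b)|\le L|a-b|$ gives $\|f(u(x))-f(u(y))\|_\Ld\le L\|u(x)-u(y)\|_\Ld$, so $f(u)$ is $\Ld$-continuous, and $|f(u(x))|\le|f(0)|+L|u(x)|$ yields $\|f(u(x))\|_\Ld\le|f(0)|+L\|u\|_2<\infty$; hence $f(u)\in C_b(\X,\Ld)$. For isometry invariance, note that $u\=\phi u$ for every $\phi\in\Phi$ and that $f$ is Borel, so composition with $f$ preserves equality of finite-dimensional distributions, giving $f(u)\=f(\phi u)=\phi f(u)$ (this is exactly the reasoning used in the remark following Definition~\ref{jiso-def}, cf.\ \cite[Theorem 25.7]{MR1324786}). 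Thus $f(u)\in\Iso_2$.

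Next I would transfer the quantitative bound. Since $u\in\Iso^1_2$, Lemma~\ref{derivative-estimate} provides constants $c>0$ and $\epsilon>0$ with $E|(u(x+hz)-u(x))/(h|z|)|^2\le c$ for every $h\in(0,\epsilon)$ and $z\in\X$. Applying the Lipschitz inequality pathwise,
\[
  E\Big|\frac{f(u(x+hz))-f(u(x))}{h|z|}\Big|^2\le L^2\,E\Big|\frac{u(x+hz)-u(x)}{h|z|}\Big|^2\le L^2 c
\]
for the same range of $h$ and $z$. Feeding this back into the ``if'' direction of Lemma~\ref{derivative-estimate}, applied now to $f(u)\in\Iso_2$, yields $f(u)\in\Iso^1_2$.

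There is no serious obstacle here; the only step that requires attention is the verification that $f(u)$ really lands in $\Iso_2$ (both the $C_b(\X,\Ld)$-regularity and the isometry invariance), because Lemma~\ref{derivative-estimate} presupposes it. Once that is in place, the Lipschitz estimate does all the remaining work, and in particular $\Spec(f(u))$ never needs to be computed explicitly.
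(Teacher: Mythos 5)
Your proposal is correct and follows essentially the same route as the paper: the Lipschitz bound transfers the difference-quotient estimate from $u$ to $f(u)$, and Lemma~\ref{derivative-estimate} is applied in both directions to conclude. The only difference is that you explicitly verify $f(u)\in\Iso_2$ (continuity, boundedness, isometry invariance), a prerequisite the paper leaves implicit; this is a reasonable addition but not a different argument.
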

    \begin{proof}
    Because $f$ is Lipschitz we have
    \begin{equation*}
      \Big\|\frac{f(u(x+hz))-f(u(x))}{h|z|}\Big\|_2 \leq L \Big\|\frac{u(x+hz)-u(x)}{h|z|}\Big\|_2.
    \end{equation*}
    We conclude by applying Lemma~\ref{derivative-estimate} twice.
    \end{proof}
  \end{subsection}
    \end{section}
    \begin{section}{Linear equation}\label{linear-equation}
      In this section we discuss properties of solutions to the linear Cauchy problem
      \begin{equation}\label{heat}
        \left\{
        \begin{aligned}
          &\dt u + \fLap u = 0\quad&\text{on $\T\times\X$}, \\
          &u(0) \= u_0\quad&\text{on $\X$}.
        \end{aligned}
        \right.
      \end{equation}
      Before we study the solutions themselves we need to describe in detail some of the objects we work with.
      \begin{definition}\label{flap}
      Let $u\in C_c^\infty(\X,\R)$ and $s\in(0,1]$. We define the fractional Laplace operator
     \begin{equation*}
      \fLap u(x) = \mathcal{F}\{|\xi|^{2s}\mathcal{F}^{-1}u(\xi)\}.
     \end{equation*}
     \end{definition}
     \begin{definition}\label{iso-fractional-laplacian}
     For $u_0\in\Iso^{2s}_2$, such that $u_0=Eu_0(x)+ \int_\X e^{ix\cdot\xi}Z(d\xi)$, we define
         \begin{equation*}
             -\fLap u_0(x) = \int_\X |\xi|^{2s}e^{ix\cdot\xi}Z(d\xi).
         \end{equation*}
     \end{definition}
     \begin{remark}\label{flap-prop}
      Notice that we have
         \begin{equation*}
             E\fLap u_0(x) = 0
         \end{equation*}
         and $\Spec(\fLap u_0)=|\xi|^{4s}\sigma(d\xi)$.
     \end{remark}
     By $\{P_t\}_{t\geq0}$ we denote the usual semigroup of linear operators
     generated by the fractional Laplacian (regardless of the chosen parameter $s$, which we assume to be fixed).
         We define the action of the operators $P_t$ on the space of isometry-invariant random fields
         in the following fashion.
         \begin{definition}\label{semigroup-definition}
         For $u_0\in\Iso_2$, such that $u_0(x)=Eu_0(x)+\int_\X e^{ix\cdot\xi}Z(d\xi)$, we define
         \begin{equation*}
            P_tu_0(x) = Eu_0(x)+\int_\X e^{ix\cdot\xi-t|\xi|^{2s}}Z(d\xi).
         \end{equation*}
         \end{definition}
         \begin{remark}
         Notice that 
         \begin{equation*}
             \|P_tu_0\|_2^2 = (Eu_0(x))^2+\int_\X e^{-2t|\xi|^{2s}}\,\sigma(d\xi) \leq \|u_0\|_2^2,
         \end{equation*}
         which shows that $P_t$ is a contractive operator on $\Iso_2$. This expression also shows that
         $\Spec(P_tu_0)=e^{-2t|\xi|^{2s}}\sigma(d\xi)$. By comparing Definition~\ref{semigroup-definition} with representation~\eqref{spectral-representation} we see that the semigroup property is preserved as well
         \begin{multline*}
          P_r\big(P_tu_0(x)\big) = EP_tu_0(x)+\int_\X e^{ix\cdot\xi-r|\xi|^{2s}}\big(e^{-t|\xi|^{2s}}Z(d\xi)\big)\\
          = Eu_0(x)+\int_\X e^{ix\cdot\xi-(r+t)|\xi|^{2s}}Z(d\xi) = P_{r+t}u_0(x).
         \end{multline*}
         \end{remark}
         \begin{proposition}\label{semigroup-limit}
          If $u_0\in\Iso^s_2$ then 
          \begin{equation*}
           \lim_{h\to0} E\frac{u_0-P_hu_0}{h}u_0 = E\big((-\Delta)^{s/2}u_0\big)^2.
          \end{equation*}
         \end{proposition}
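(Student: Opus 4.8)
The plan is to move everything onto the spectral side, where both $P_h$ and $(-\Delta)^{s/2}$ act diagonally, and to notice that the pairing against $u_0$ is governed by the weight $|\xi|^{2s}$, so it survives the weaker hypothesis $u_0\in\Iso_2^s$. Write $u_0(x)=Eu_0(x)+\int_\X e^{ix\cdot\xi}Z(d\xi)$ as in \eqref{spectral-representation}. By Definition~\ref{semigroup-definition},
\[
  \frac{u_0(x)-P_hu_0(x)}{h}=\int_\X e^{ix\cdot\xi}\,\frac{1-e^{-h|\xi|^{2s}}}{h}\,Z(d\xi),
\]
an element of $\Ld$ with zero mean. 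Multiplying by $u_0(x)$ and taking expectation, the contribution of the deterministic term $Eu_0(x)$ drops out, and the isometry of the orthogonal random measure (in the form used in Lemma~\ref{derivative-estimate}, together with its polarisation) collapses the resulting double $Z$-integral onto the diagonal; since $|e^{ix\cdot\xi}|=1$, the $x$-dependence disappears (consistently with Remark~\ref{independent}) and one obtains
\[
  E\,\frac{u_0-P_hu_0}{h}\,u_0=\int_\X \frac{1-e^{-h|\xi|^{2s}}}{h}\,d\sigma(\xi),\qquad \sigma=\Spec(u_0).
\]

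Next I would identify the right-hand side of the claimed identity on the spectral side. Since $(-\Delta)^{s/2}$ has Fourier symbol $|\xi|^{s}$, the same computation as in Remark~\ref{spectral-derivative} and Remark~\ref{flap-prop} gives $(-\Delta)^{s/2}u_0(x)=\int_\X |\xi|^{s}e^{ix\cdot\xi}Z(d\xi)$, whence, again by the isometry of $Z$,
\[
  E\big((-\Delta)^{s/2}u_0\big)^2=\int_\X|\xi|^{2s}\,d\sigma(\xi),
\]
and this quantity is finite precisely because $u_0\in\Iso_2^s$.

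It then remains to pass to the limit $h\to0$ under the integral sign. The integrand $\tfrac{1-e^{-h|\xi|^{2s}}}{h}$ converges pointwise to $|\xi|^{2s}$, and the elementary inequality $0\le 1-e^{-t}\le t$ for $t\ge 0$ gives the uniform bound $\tfrac{1-e^{-h|\xi|^{2s}}}{h}\le|\xi|^{2s}$ with $|\xi|^{2s}\in L^1(\sigma)$ by assumption. The dominated convergence theorem then yields $\int_\X \tfrac{1-e^{-h|\xi|^{2s}}}{h}\,d\sigma(\xi)\to\int_\X|\xi|^{2s}\,d\sigma(\xi)$, which combined with the two displays above is exactly the assertion.

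The one genuinely delicate point — and the step I would be most careful to write out — is the reduction to the scalar integral in the first paragraph. One must invoke the isometry of $Z$ for the bilinear pairing rather than attempt to take limits inside each factor separately: under the hypothesis $u_0\in\Iso_2^s$ (strictly weaker than $u_0\in\Iso_2^{2s}$) the difference quotient $\tfrac{u_0-P_hu_0}{h}$ need not converge in $\Ld$ at all, since its $\Ld$-norm involves the weight $|\xi|^{4s}$; only the pairing with $u_0$, which sees $|\xi|^{2s}$, is well behaved, and that is what makes the statement hold in this generality.
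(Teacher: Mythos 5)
Your argument is correct and is essentially the paper's own proof: both pass to the spectral side, use the isometry of the orthogonal random measure to reduce the pairing to $\int_\X \frac{1-e^{-h|\xi|^{2s}}}{h}\,d\sigma(\xi)$, identify $E\big((-\Delta)^{s/2}u_0\big)^2=\int_\X|\xi|^{2s}\,d\sigma(\xi)$, and conclude by dominated convergence using $1-e^{-t}\le t$. Your explicit handling of the mean term and of the domination bound only spells out steps the paper leaves implicit.
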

         \begin{proof}
           Let $\sigma$ be the spectral measure and $Z$ be the orthogonal random measure corresponding to $u_0$. We have
	   \begin{multline*}
             E\frac{u_0-P_hu_0}{h}u_0 
	     =\frac{1}{h}E\int_\X\big(e^{ix\cdot\xi}-e^{ix\cdot\xi-h|\xi|^{2s}}\big)Z(d\xi)\int_\X e^{ix\cdot\xi}Z(d\xi)\\
             = \frac{1}{h}\int_\X \big(1-e^{-h|\xi|^{2s}}\big)\sigma(d\xi).
           \end{multline*}
           Because we assume $u_0\in\Iso^s_2$ we may use the Lebesgue dominated convergence theorem and pass to the limit to obtain
           \begin{equation*}
           \lim_{h\to0}\frac{1}{h}\int_\X \big(1-e^{-h|\xi|^{2s}}\big)\sigma(d\xi) = \int_\X |\xi|^{2s}\sigma(d\xi).
           \end{equation*}
           On the other hand,
           \begin{equation*}
             \int_\X |\xi|^{2s}\sigma(d\xi) 
             = E\Big(\int_\X |\xi|^s e^{ix\cdot\xi}Z(d\xi)\Big)^2 = E\big((-\Delta)^{s/2}u_0\big)^2
           \end{equation*}
           (cf. Remark~\ref{flap-prop}).
         \end{proof}

         In addition to such ``spectral'' framework
         we may employ a direct approach as well.
	To this end we introduce the kernel $p_t$ of the operator $P_t$ defined by the formula
	\begin{equation*}
	 p_t(x) = \int_\X e^{ix\cdot\xi-t|\xi|^{2s}}\,d\xi.
	\end{equation*}
	It is well-known that for $s\in(0,1]$ and $t\geq0$ the function $p_t$ is positive, radially symmetric and the function $t\mapsto p_t(y)$ is continuous. For every $t>0$ we also have $\int_\X p_t(x)\,dx=1$.

	The following lemma provides a connection between the two approaches.
       \begin{lemma}\label{kernel-linfty}
          Let $u_0\in\Iso_p$ for some $2\leq p\leq\infty$. Then for every $t>0$ we have
          \begin{equation*}
           P_tu_0(x) = \int_\X p_t(y)u_0(x-y)\,dy,
          \end{equation*}
          where the integral is understood in the Bochner sense on functions in the space $C_b(\X,\Lp)$. Moreover, $\|P_tu_0\|_p\leq \|u_0\|_p$ for all $t\geq0$.
      \end{lemma}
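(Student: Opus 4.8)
\emph{Proof proposal.} The plan is to show that the right-hand side defines an element $Q_tu_0\in C_b(\X,\Lp)$ obeying the stated bound, and then to identify $Q_tu_0$ with $P_tu_0$ via the spectral representation~\eqref{spectral-representation}. For the first part, fix $t>0$ and $x\in\X$: since $u_0\in C_b(\X,\Lp)$ the map $y\mapsto p_t(y)u_0(x-y)$ is continuous from $\X$ to $\Lp$ and satisfies $\|p_t(y)u_0(x-y)\|_\Lp\le p_t(y)\|u_0\|_p\in L^1(\X)$ (using $p_t\ge0$, $\int_\X p_t=1$), so the Bochner integral $Q_tu_0(x)$ exists with $\|Q_tu_0(x)\|_\Lp\le\|u_0\|_p$; dominated convergence for Bochner integrals (same majorant) gives continuity of $x\mapsto Q_tu_0(x)$, so $Q_tu_0\in C_b(\X,\Lp)$ with $\|Q_tu_0\|_p\le\|u_0\|_p$. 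Once the identity is established this yields the ``moreover'' assertion (for $t=0$ one has $P_0u_0=u_0$).

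For the identification, note that $\Iso_p\subset\Iso_2$ when $p\ge2$ ($P$ being a probability measure), so $u_0(x)=Eu_0+\int_\X e^{ix\cdot\xi}Z(d\xi)$, $P_tu_0$ is given by Definition~\ref{semigroup-definition}, and $Q_tu_0(x)\in\Ld$; hence it suffices to prove the identity in $\Ld$. Since $\int_\X p_t=1$, the constant $Eu_0$ is reproduced by $Q_t$, and it remains to see that $\int_\X p_t(y)\big(\int_\X e^{i(x-y)\cdot\xi}Z(d\xi)\big)\,dy=\int_\X e^{ix\cdot\xi-t|\xi|^{2s}}Z(d\xi)$. I would obtain this by interchanging the two integrals: the stochastic integral $I\colon g\mapsto\int_\X g\,dZ$ is a bounded linear operator from $L^2(\X,\sigma)$ to $\Ld$, the map $y\mapsto F_y$, where $F_y$ is the $L^2(\X,\sigma)$-function $\xi\mapsto p_t(y)e^{i(x-y)\cdot\xi}$, is Bochner integrable (norm $p_t(y)\sigma(\X)^{1/2}\in L^1(\X)$), and bounded operators commute with Bochner integrals, so the left-hand side equals $I\big(\int_\X F_y\,dy\big)$; finally $\int_\X F_y\,dy$, evaluated $\sigma$-a.e., is $\xi\mapsto e^{ix\cdot\xi}\int_\X p_t(y)e^{-iy\cdot\xi}\,dy=e^{ix\cdot\xi-t|\xi|^{2s}}$ by the definition of $p_t$ (the identity $\int_\X p_t(y)e^{-iy\cdot\xi}\,dy=e^{-t|\xi|^{2s}}$ being consistent with $\int_\X p_t=1$). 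Equivalently, one may approximate the outer integral by Riemann sums over large balls and pass to the limit using the isometry $E\big|\int_\X g_n\,dZ-\int_\X g\,dZ\big|^2=\int_\X|g_n-g|^2\,d\sigma$ and dominated convergence on the finite measure $\sigma$. Either way $Q_tu_0=P_tu_0$.

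The only nontrivial step is this interchange of the deterministic integral against $p_t$ with the stochastic integral against $Z$ --- a stochastic Fubini argument, slightly delicate only because $\X$ is unbounded; phrased as ``a bounded operator commutes with a Bochner integral'' (or via Riemann sums) it is painless, and the remaining ingredients --- Bochner integrability, $\int_\X p_t=1$, and the Fourier identity for $p_t$ --- are routine.
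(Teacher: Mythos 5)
Your proposal is correct and follows essentially the same route as the paper: write $e^{-t|\xi|^{2s}}$ as the Fourier integral of $p_t$, interchange the $dy$-integral with the stochastic integral against $Z$, use $\int_\X p_t(y)\,dy=1$ for the mean term, and read off the norm bound from $\int_\X p_t(y)\|u_0(x-y)\|_{\Lp}\,dy=\|u_0\|_p$. The only difference is that you justify the interchange carefully (bounded operator commuting with a Bochner integral, or Riemann sums with the $L^2(\sigma)$-isometry), where the paper simply invokes the Fubini theorem.
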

  \begin{proof}
    Let $u_0(x) = Eu_0(x)+\int_\X e^{ix\cdot\xi}\,Z(d\xi)$. Following the general definition we have
    \begin{multline*}
      P_tu_0(x) = Eu_0(x)+\int_\X e^{ix\cdot\xi-t|\xi|^{2s}}Z(d\xi) \\
      = Eu_0(x)+\int_\X\int_\X p_t(y) e^{-iy\xi}\,dy\, e^{ix\cdot\xi}Z(d\xi).
    \end{multline*}
    By the Fubini theorem and the fact that $\int_\X p_t(y)\,dy = 1$ we then obtain
    \begin{multline*}
      P_tu_0(x) = Eu_0(x)+\int_\X p_t(y) \int_\X e^{-iy\xi}e^{ix\cdot\xi}Z(d\xi)\,dy \\
      = \int_\X p_t(y)\Bigg(Eu_0(x)+\int_\X e^{i(x-y)\cdot\xi}Z(d\xi)\Bigg)\,dy
      = \int_\X p_t(y)u_0(x-y)\,dy.
    \end{multline*}
    The last integral is convergent in the Bochner sense because we have
    \begin{equation}\label{norm-estimate}
     \int_\X \|p_t(y)u_0(x-y)\|_{\Lp}\,dy = \|u_0\|_p\int_\X p_t(y)\,dy = \|u_0\|_p,
    \end{equation}
    which also confirms that $\|P_tu_0\|_p\leq \|u_0\|_p$.
  \end{proof}
  \begin{proposition}\label{iso-infinity}
  If $u_0\in\Iso_\infty$ then $P_t u_0(x,\omega) = \int_\X p_t(y)u_0(x-y,\omega)\,dy$ for every $t>0$ and almost every $\omega\in\Omega$.
  \end{proposition}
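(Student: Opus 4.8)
The plan is to upgrade the Bochner-integral identity of Lemma~\ref{kernel-linfty} to a genuinely pointwise (in $\omega$) statement, the bridge being a Fubini theorem relating the $\Ld$-valued Bochner integral to the iterated Lebesgue integral.

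\emph{Reductions.} Since $P$ is a probability measure, $L^\infty(\Omega)\hookrightarrow\Ld$ continuously, so $u_0\in\Iso_\infty$ gives $u_0\in\Iso_2$ with $\|u_0\|_2\le\|u_0\|_\infty$; in particular $P_tu_0$ is well defined and Lemma~\ref{kernel-linfty} applies. Evaluating the Bochner identity at a fixed $x$ — the evaluation $\delta_x$ is a bounded linear operator on $C_b(\X,\Ld)$, and Bochner integrals commute with bounded operators — yields that $P_tu_0(x)=\int_\X p_t(y)u_0(x-y)\,dy$ as a Bochner integral in $\Ld$. Next I would fix a jointly $\Bor(\X)\otimes\Sigma$-measurable representative of the random field $(y,\omega)\mapsto u_0(y,\omega)$; this exists because $y\mapsto u_0(y)$ is $\Ld$-continuous (approximate by locally constant step fields $u_0^{(n)}(y)=\sum_k\mathbbm{1}_{Q^n_k}(y)\,u_0(y^n_k)$ over a fine partition $\{Q^n_k\}$ of $\X$, which are jointly measurable and converge to $u_0(y)$ in $\Ld$ for each $y$, uniformly on compacta; a $\limsup$/diagonal argument then produces a jointly measurable version), or one simply takes the field to be presented as a jointly measurable function, as is customary.

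\emph{Classical Fubini.} With $x$ fixed, the map $(y,\omega)\mapsto p_t(y)u_0(x-y,\omega)$ is then product-measurable, and
\[
\int_\X\!\!\int_\Omega |p_t(y)u_0(x-y,\omega)|\,P(d\omega)\,dy\;\le\;\|u_0\|_\infty\int_\X p_t(y)\,dy\;=\;\|u_0\|_\infty\;<\;\infty,
\]
so by Tonelli and then Fubini, for $P$-a.e.\ $\omega$ the integral $v(x,\omega):=\int_\X p_t(y)u_0(x-y,\omega)\,dy$ is well defined, $|v(x,\omega)|\le\|u_0\|_\infty$, and $v(x,\cdot)\in L^1(\Omega)\cap L^\infty(\Omega)\subset\Ld$.

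\emph{Identification with the Bochner integral.} For any $A\in\Sigma$ the functional $g\mapsto\int_A g\,dP$ is bounded linear on $\Ld$, hence commutes with the Bochner integral; applying it and then invoking classical Fubini once more gives
\[
\int_A P_tu_0(x)\,dP\;=\;\int_\X\Big(\int_A p_t(y)u_0(x-y,\cdot)\,dP\Big)\,dy\;=\;\int_A v(x,\omega)\,P(d\omega).
\]
As this holds for every $A\in\Sigma$, we conclude $P_tu_0(x,\cdot)=v(x,\cdot)$ $P$-a.e., i.e.\ $P_tu_0(x,\omega)=\int_\X p_t(y)u_0(x-y,\omega)\,dy$ for a.e.\ $\omega$, the exceptional null set depending harmlessly on $t$ and $x$, which is the assertion. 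The only genuinely nontrivial point is the second step: making precise the joint measurability of the random field and the "Fubini for Bochner integrals" passage; once those are granted (or cited), the rest is bookkeeping.
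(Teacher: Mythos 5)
Your argument is correct, but it takes a genuinely different route from the paper's. The paper's proof is a two-line application of the Hille theorem: the evaluation map $T_\omega u = u(\cdot\,,\omega)$ is viewed as a bounded linear operator from $C(\X,L^\infty(\Omega))$ into $L^\infty(\X,\Bor(\X),dx)$, and pulling $T_\omega$ inside the Bochner integral of Lemma~\ref{kernel-linfty} gives the identity at once, for a.e.\ $\omega$ as an equality in $L^\infty(\X,dx)$. You instead fix $x$, test both sides against events $A\in\Sigma$ (a bounded functional on $\Ld$, which commutes with the Bochner integral) and apply Tonelli--Fubini twice; this is more elementary and has the virtue of making explicit the joint measurability of $(y,\omega)\mapsto u_0(y,\omega)$, which the paper's $T_\omega$ also needs but leaves implicit (without a jointly measurable representative, $T_\omega$ does not obviously land in $L^\infty(\X,\Bor(\X),dx)$). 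The price is the quantifier order: your null set depends on $x$ (and $t$), i.e.\ you prove the identity for each fixed $x$ for a.e.\ $\omega$, whereas the paper's version --- and its later use in Lemmas~\ref{classical-solutions} and~\ref{classical-solutions-equiv}, where one needs $x\mapsto P_tu_0(x,\omega)$ to coincide with the convolution of $p_t$ with $u_0(\cdot\,,\omega)$ as a bounded function of $x$ for a.e.\ $\omega$ --- requires the exceptional set to be uniform in $x$. This is not a real obstruction for you: both sides are jointly measurable in $(x,\omega)$ (the left side since $P_tu_0\in C_b(\X,\Ld)$ admits a jointly measurable version, the right side by Tonelli), so the set where they differ is product-measurable with $P$-null $\omega$-sections for every $x$; one more application of Fubini on $\X\times\Omega$ then yields, for a.e.\ $\omega$, equality for a.e.\ $x$, which is the form actually needed. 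With that one additional remark your proof fully serves the paper's purposes.
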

  \begin{proof}
  Notice that the operator $T_\omega:C(\X,L^\infty(\Omega))\to L^\infty(\X,\Bor(\X),dx)$ defined as
  $T_\omega u = u(\cdot\,,\omega)$ is bounded. By the Hille theorem we then have
   \begin{multline*}
     P_t u_0(x,\omega) = T_\omega\Big(\int_\X p_t(y)u_0(x-y)\,dy\Big) \\= 
     \int_\X p_t(y)T_\omega \big(u_0(x-y)\big)\,dy = \int_\X p_t(y)u_0(x-y,\omega)\,dy.
   \end{multline*}

  \end{proof}
 \begin{remark}\label{l-infinity}
  Notice that without additional assumptions, an individual realisation $u_0(x,\omega)$
  of the random field $u_0$ may not be integrable, such that for a given $\omega\in\Omega$, the Lebesgue integral
  $\int_\X p_t(y)u_0(x-y,\omega)\,dy$ may not exist. This is the main reason
  why we cannot consider solutions for \emph{every single realisation} of the initial data separately
  and then \emph{average them out} to get our results.
  
 \end{remark}

  \begin{lemma}\label{jiso}
      For every $2\leq p \leq \infty$ and every $K\geq0$ if $u_0\in\Iso_p$ then $P_tu_0\in\JIso_{K,p}$.
  \end{lemma}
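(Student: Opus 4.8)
Write $u(t):=P_tu_0$. The plan is to verify directly the two conditions defining $\JIso_{K,p}$ (Definition~\ref{jiso-def}): that $u\in\mathcal{B}_{K,p}$, and that $u\=\phi u$ for every spatial isometry. By Lemma~\ref{kernel-linfty}, for $t>0$ we have $u(t,x)=\int_\X p_t(y)u_0(x-y)\,dy$ as a Bochner integral in $C_b(\X,\Lp)$, so $u(t)\in C_b(\X,\Lp)$ and $\|u(t)\|_p\le\|u_0\|_p$; since also $u(0)=u_0$, this already gives $\K{u}{p}\le\|u_0\|_p<\infty$ for every $K\ge0$. That each $u(t)$ actually lies in $\Iso_p$ (rather than merely in $C_b(\X,\Lp)$) will come for free from the joint invariance proved below, so the only remaining point for membership in $\mathcal{B}_{K,p}$ is continuity of $t\mapsto u(t)$ into $\Iso_p$.

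For continuity I would first record that every $w\in\Iso_p$ is in fact \emph{uniformly} continuous: applying $w\=\phi w$ to the translation $\phi(z)=z-x$ gives $(w(x+h),w(x))\=(w(h),w(0))$, so $\|w(x+h)-w(x)\|_\Lp$ does not depend on $x$ and tends to $0$ as $h\to0$ by $\Lp$-continuity of $w$ at the origin (the same holds for $p=\infty$, since the $L^\infty$-norm of a random variable depends only on its law). Using $\int_\X p_h\,dy=1$, this yields for $h>0$
\[
  \|u(h)-u_0\|_p\ \le\ \sup_{|y|\le\delta}\|u_0(\cdot-y)-u_0\|_p\ +\ 2\|u_0\|_p\int_{|y|>\delta}p_h(y)\,dy ,
\]
and letting $h\to0$ (the kernel mass escaping $\{|y|\le\delta\}$ vanishes) and then $\delta\to0$ proves continuity at $t=0$. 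Continuity at a general $t>0$ then follows from the semigroup property together with the contraction bound, both of which extend to the convolution operators $g\mapsto\int_\X p_t(y)g(\cdot-y)\,dy$ on $C_b(\X,\Lp)$ via $p_r*p_t=p_{r+t}$ and Fubini, since $\|u(t+h)-u(t)\|_p\le\|u(h)-u_0\|_p$.

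It remains to show $u\=\phi u$ for every $\phi\in\Phi$, where $(\phi u)(t,x)=u(t,\phi(x))$. Writing $\phi(x)=Rx+b$ with $R$ orthogonal, the substitution $y\mapsto Ry$ in the kernel representation together with radial symmetry of $p_t$ gives the \emph{deterministic} identity $(\phi u)(t)=\phi P_tu_0=P_t(\phi u_0)$ for $t>0$ (and trivially for $t=0$), where $\phi u_0\in\Iso_p$ because $\phi u_0\=u_0$. So it suffices to prove the general principle that the operators $P_t$ preserve equality of finite-dimensional distributions: if $v,w\in C_b(\X,\Lp)$ with $v\=w$, then $\bigl(P_{t_1}v(x_1),\dots,P_{t_n}v(x_n)\bigr)\=\bigl(P_{t_1}w(x_1),\dots,P_{t_n}w(x_n)\bigr)$ for all $t_i>0$, $x_i\in\X$; applied to $v=u_0$, $w=\phi u_0$ this gives $u\=\phi u$ (indices with $t_i=0$ are handled directly via $P_0=\mathrm{id}$). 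To prove the principle I would truncate each integral $P_{t_i}v(x_i)=\int_\X p_{t_i}(y)v(x_i-y)\,dy$ to $\{|y|\le R\}$ with tail bounded uniformly by $\|v\|_p\int_{|y|>R}p_{t_i}$, then approximate the truncated integral by Riemann sums $\sum_j p_{t_i}(y_j)\,v(x_i-y_j)\,|Q_j|$ over one common finite measurable partition $\{Q_j\}$ of $\{|y|\le R\}$ with vanishing mesh and common sample points $y_j$; by (uniform) continuity of $v$ on compacta these converge in $\Lp$, hence in distribution. Each such Riemann-sum vector is the \emph{same} fixed linear function of the finite random vector $(v(x_i-y_j))_{i,j}$, whose joint law equals that of $(w(x_i-y_j))_{i,j}$ precisely because $v\=w$; letting the mesh shrink and $R\to\infty$, uniqueness of distributional limits gives the claim.

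Combining the parts, $u=P_\bullet u_0\in\mathcal{B}_{K,p}$ and $u\=\phi u$ for every $\phi\in\Phi$, i.e.\ $P_tu_0\in\JIso_{K,p}$. I expect the last step — the Riemann-sum approximation making precise that the Bochner-integral operator $P_t$ does not mix the probabilistic structure, and therefore transports finite-dimensional distributions — to be the only genuine obstacle; the delicate points there are that a single partition and choice of sample points must serve all finitely many space–time points and both $u_0$ and $\phi u_0$ simultaneously, and that $\Lp$-convergence (including the case $p=\infty$) does pass to convergence in law.
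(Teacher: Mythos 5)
Your proof is correct, and while it starts from the same kernel representation of Lemma~\ref{kernel-linfty} and the same norm bound $\K{P_tu_0}{p}\leq\|u_0\|_p$, it diverges from the paper in two places. For continuity the paper simply takes $t_n\to t$ and applies dominated convergence to $\int_\X|p_t(y)-p_{t_n}(y)|\,dy$, an argument that really only covers $t>0$; your splitting of the kernel mass at radius $\delta$, combined with the observation that isometry invariance upgrades $\Lp$-continuity of $u_0$ to \emph{uniform} continuity, is what actually handles $t=0$ (and then right/left continuity at $t>0$ via the semigroup contraction), so on this point you are more careful than the source. For the invariance $P_tu_0\=\phi(P_tu_0)$ the paper argues through the spectral representation: it writes $\phi(P_tu_0)(x)=\int_\X e^{i\phi(x)\cdot\xi-t|\xi|^{2s}}Z(d\xi)=P_tu_0(\phi(x))\=P_tu_0(x)$, using that $\Iso_p\subset\Iso_2$ for $p\geq2$ and leaving the last distributional equality essentially asserted; you instead prove a general transfer principle — $P_t$, being an $\Lp$-limit of fixed finite linear combinations of point evaluations (truncation plus Riemann sums on a common partition), maps fields with equal finite-dimensional distributions to fields with equal finite-dimensional distributions — and combine it with the deterministic identity $\phi P_tu_0=P_t(\phi u_0)$ coming from radial symmetry of $p_t$. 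Your route is more elementary (no orthogonal random measure needed), works verbatim for all $2\leq p\leq\infty$ and for joint space--time distributions, and fills in the distributional step the paper compresses; the paper's spectral argument is shorter and consistent with how it defines $P_t$ in Definition~\ref{semigroup-definition}, but buys that brevity by leaning on the $L^2$ machinery.
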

  \begin{proof}
      By Lemma~\ref{kernel-linfty} we have
      \begin{equation*}
       P_tu_0(x) = \int_\X p_t(y)u_0(x-y)\,dy.
      \end{equation*}
      Consider a sequence $t_n\to t$. By the continuity of the function $t\mapsto p_t(y)$ and the Lebesgue dominated convergence theorem we obtain
      \begin{multline*}
       \lim_{n\to\infty}\|P_tu_0(x) - P_{t_n}u_0(x)\|_p \leq
       \lim_{n\to\infty}\int_\X\big\|(p_t(y)-p_{t_n}(y))u_0(x-y)\big\|_p\\
       =\|u_0\|_p\lim_{n\to\infty}\int_\X\big|(p_t(y)-p_{t_n}(y))\big|  =0.
      \end{multline*}
      For every $K\geq0$ and $2\leq p\leq\infty$ identity \eqref{norm-estimate} gives us the estimate
      \begin{equation*}
       \sup_{t\geq0}e^{-tK}\|P_tu_0(x)\|_p \leq \sup_{t\geq0}e^{-tK}\int_\X \|p_t(y)u_0(x-y)\|_p\,dy  \leq  \|u_0\|_p,
      \end{equation*}
      which shows that $P_tu_0\in\mathcal{B}_{K,p}$.
      
      Let $\phi\in\Phi$ be an isometry and $Z$ be the orthogonal random measure corresponding to $u_0$. Then
      because $u_0\in\Iso_2$ we have
      \begin{multline*}
        \phi(P_tu_0)(x) = \phi\Big(\int_\X e^{ix\cdot\xi-t|\xi|^{2s}}\,Z(d\xi)\Big) \\=
        \int_\X e^{i\phi(x)\cdot\xi-t|\xi|^{2s}}\,Z(d\xi)  = P_t u_0(\phi(x)) \= P_t u_0(x),
      \end{multline*}
      which confirms that $P_tu_0\in\JIso_{K,p}$ (see Definition \ref{jiso-def}).
  \end{proof}
  Let us now show a regularising effect of the linear semigroup.
   \begin{lemma}\label{linear-regularity}
       If $u_0\in\Iso_2$ then $P_tu_0\in C((0,\infty),\Iso^\alpha_2)$ for every $\alpha\geq 0$. Moreover
       there exists a constant $c_{s,\alpha}$ such that for every $t>0$
       \begin{equation*}
        \|P_tu_0\|_{\alpha,2}\leq (1+c_{s,\alpha}t^{-\alpha/2s})\|u_0\|_{2}.
       \end{equation*}
      \end{lemma}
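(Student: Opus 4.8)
The plan is to work on the spectral side, where the semigroup acts as multiplication by $e^{-t|\xi|^{2s}}$ on the orthogonal random measure $Z$ associated with $u_0$. By Remark after Definition~\ref{semigroup-definition} we already know $\Spec(P_tu_0) = e^{-2t|\xi|^{2s}}\sigma(d\xi)$, so the entire statement reduces to two elementary facts about this measure: first, that it has finite $|\xi|^{2\alpha}$-moment for every $t>0$, and second, a quantitative bound on that moment. For the norm estimate I would write
\begin{equation*}
 \|P_tu_0\|_{\alpha,2}^2 = \int_\X(1+|\xi|^{2\alpha})e^{-2t|\xi|^{2s}}\,\sigma(d\xi)
 \leq \sigma(\X) + \sup_{\xi\in\X}\big(|\xi|^{2\alpha}e^{-2t|\xi|^{2s}}\big)\,\sigma(\X),
\end{equation*}
and then compute the supremum by a one-variable calculus exercise: setting $r=|\xi|$, the function $r^{2\alpha}e^{-2tr^{2s}}$ is maximised at $r^{2s}=\alpha/(2st)$, giving a bound of the form $c_{s,\alpha}t^{-\alpha/s}$. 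Since $\sigma(\X)\leq\|u_0\|_2^2$ and $(a+b)^{1/2}\leq a^{1/2}+b^{1/2}$, this yields $\|P_tu_0\|_{\alpha,2}\leq(1+c_{s,\alpha}^{1/2}t^{-\alpha/2s})\|u_0\|_2$, matching the claimed form after relabelling the constant. This also shows $P_tu_0\in\Iso^\alpha_2$ for each fixed $t>0$.

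For the continuity in $t$ on $(0,\infty)$ with values in $\Iso^\alpha_2$, I would fix $t_0>0$ and a sequence $t_n\to t_0$, and estimate
\begin{equation*}
 \|P_{t_n}u_0 - P_{t_0}u_0\|_{\alpha,2}^2 = \int_\X(1+|\xi|^{2\alpha})\big(e^{-t_n|\xi|^{2s}}-e^{-t_0|\xi|^{2s}}\big)^2\sigma(d\xi).
\end{equation*}
For $n$ large, all $t_n$ lie in some interval $[\delta,M]\subset(0,\infty)$, and on that interval $(1+|\xi|^{2\alpha})\big(e^{-t_n|\xi|^{2s}}-e^{-t_0|\xi|^{2s}}\big)^2 \leq 4(1+|\xi|^{2\alpha})e^{-2\delta|\xi|^{2s}}$, which is $\sigma$-integrable by the supremum bound above. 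The integrand converges pointwise to $0$ by continuity of $u\mapsto e^{-u}$, so the dominated convergence theorem gives $\|P_{t_n}u_0-P_{t_0}u_0\|_{\alpha,2}\to0$. Here I am implicitly using the representation from Definition~\ref{semigroup-definition} together with the isometry property of $Z$ (i.e. $E|\int g\,dZ|^2 = \int|g|^2\,d\sigma$) to pass between the random field and its spectral measure; this is exactly the computation already performed in the proof of Proposition~\ref{semigroup-limit} and in the remark following Definition~\ref{semigroup-definition}.

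The only genuinely non-routine point is the extraction of the $t^{-\alpha/2s}$ rate, and that is just the maximisation of $r^{2\alpha}e^{-2tr^{2s}}$; everything else is dominated convergence against the finite measure $\sigma$. One should be slightly careful that the dominating function must be independent of $n$, which forces the restriction of $t_n$ to a compact subinterval of $(0,\infty)$ — this is why continuity is only asserted on the open half-line and not up to $t=0$, where indeed $P_tu_0$ need not lie in $\Iso^\alpha_2$ at all. I do not expect any obstacle beyond bookkeeping.
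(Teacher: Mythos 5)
Your proposal is correct and follows essentially the same route as the paper: pass to the spectral measure $\Spec(P_tu_0)=e^{-2t|\xi|^{2s}}\sigma(d\xi)$, bound $\sup_{\xi}|\xi|^{2\alpha}e^{-2t|\xi|^{2s}}\leq c_{s,\alpha}^2t^{-\alpha/s}$ (the paper gets this by scaling, you by a direct maximisation — the same fact), use $\sqrt{a+b}\leq\sqrt a+\sqrt b$, and prove continuity by dominated convergence against the finite measure $\sigma$. Your continuity step is in fact written a little more carefully than the paper's (you correctly square the difference $e^{-t_n|\xi|^{2s}}-e^{-t_0|\xi|^{2s}}$ and exhibit an $n$-independent dominating function on a compact time interval), but the argument is the same in substance.
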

      \begin{proof}
        Let $\sigma = \Spec(u_0)$. Keep in mind it is a finite measure and $\sigma(\X) = \|u_0\|_2^2$.
        We then have $\Spec(P_tu_0) =  e^{-2t|\xi|^{2s}}\sigma(d\xi)$ and
        \begin{equation*}
          \|P_tu_0\|_{\alpha,2}^2 = \int_\X (1+|\xi|^{2\alpha})\, e^{-2t|\xi|^{2s}} \sigma(d\xi) 
          \leq \big(1+\sup_{\xi\in\X}|\xi|^{2\alpha} e^{-2t|\xi|^{2s}}\big)\sigma(\X).
        \end{equation*}
         Notice that for every $t>0$ and $\alpha\geq0$ we have
        \begin{equation}\label{cs}
          \sup_{\xi\in\X}|\xi|^{2\alpha} e^{-2t|\xi|^{2s}} = (2t)^{-\alpha/s}\sup_{\xi\in\X}|\xi|^{2\alpha} e^{-|\xi|^{2s}} = c_{s,\alpha}^2\,t^{-\alpha/s},
        \end{equation}
        which shows that
        \begin{equation*}
         \|P_tu_0\|_{\alpha,2}
         \leq \|u_0\|_{2}\sqrt{1+c_{s,\alpha}^2\,t^{-\alpha/s}}
         \leq \|u_0\|_{2}\big(1+c_{s,\alpha}\,t^{-\alpha/{2s}}\big).
        \end{equation*}
        Finally, by the Lebesgue dominated convergence theorem, for every $t>0$ we obtain
         \begin{equation*}
      \lim_{\tau\to t}\|P_tu_0- P_\tau u_0\|_{\alpha,2}^2 
      = \lim_{\tau\to t} \int_\X(1+ |\xi|^{2\alpha}) (e^{-2t|\xi|^{2s}}-e^{-2\tau|\xi|^{2s}})\,d\sigma(\xi) = 0,
     \end{equation*}
     which confirms the continuity.
      \end{proof}
  \begin{lemma}\label{linear-regularity1}
       If $u_0\in\Iso_2^\alpha$ 
       then $\grad P_tu_0\in C((0,\infty),\Iso^\alpha_2)$ for every $\alpha\geq 0$.
       Moreover
       there exists a constant $c_{s}$ (independent of $\alpha$) such that for every $t>0$
       \begin{equation*}
        \|\grad P_tu_0\|_{\alpha,2}\leq c_{s}t^{-1/2s}\|u_0\|_{\alpha,2}.
       \end{equation*}
  \end{lemma}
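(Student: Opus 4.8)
The plan is to mirror the proof of Lemma~\ref{linear-regularity}, using the spectral representation together with Remark~\ref{spectral-derivative} to compute $\Spec(\grad P_t u_0)$ and then reduce everything to a pointwise estimate on a scalar multiplier. First I would observe that if $u_0 = Eu_0 + \int_\X e^{ix\cdot\xi}Z(d\xi)$ then $P_t u_0 = Eu_0 + \int_\X e^{ix\cdot\xi - t|\xi|^{2s}}Z(d\xi)$, and since $u_0\in\Iso_2^\alpha$ Lemma~\ref{linear-regularity} (or a direct check) shows $P_t u_0\in\Iso_2^1$, so $\grad P_t u_0$ exists and by Remark~\ref{spectral-derivative} equals $\int_\X i\tfrac{z}{|z|}\cdot\xi\,e^{ix\cdot\xi - t|\xi|^{2s}}Z(d\xi)$. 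Consequently its spectral measure is $\big|\tfrac{z}{|z|}\cdot\xi\big|^2 e^{-2t|\xi|^{2s}}\sigma(d\xi)$, whose total mass is finite, confirming $\grad P_t u_0\in\Iso_2$, with in fact $Ev\grad P_t u_0(x)$ having mean zero.

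Next I would estimate the $\|\cdot\|_{\alpha,2}$ norm. We have
\begin{equation*}
 \|\grad P_tu_0\|_{\alpha,2}^2 = \int_\X (1+|\xi|^{2\alpha})\,\big|\tfrac{z}{|z|}\cdot\xi\big|^2\, e^{-2t|\xi|^{2s}}\,\sigma(d\xi) \leq \Big(\sup_{\xi\in\X}|\xi|^2 e^{-2t|\xi|^{2s}}\Big)\int_\X (1+|\xi|^{2\alpha})\,\sigma(d\xi),
\end{equation*}
using $\big|\tfrac{z}{|z|}\cdot\xi\big|\leq|\xi|$. The supremum is handled exactly as in \eqref{cs}: a scaling substitution $\xi\mapsto (2t)^{-1/2s}\xi$ gives $\sup_{\xi}|\xi|^2 e^{-2t|\xi|^{2s}} = (2t)^{-1/s}\sup_{\xi}|\xi|^2 e^{-|\xi|^{2s}} = c_s^2\, t^{-1/s}$ for a constant $c_s$ depending only on $s$ (this is the $\alpha=1$ instance of the constant in Lemma~\ref{linear-regularity}). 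Since $\int_\X(1+|\xi|^{2\alpha})\sigma(d\xi) = \|u_0\|_{\alpha,2}^2$, taking square roots yields $\|\grad P_t u_0\|_{\alpha,2}\leq c_s t^{-1/2s}\|u_0\|_{\alpha,2}$, which is the claimed bound with the constant manifestly independent of $\alpha$.

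Finally, for continuity in $t$ on $(0,\infty)$ I would fix $t>0$ and a sequence $\tau\to t$ and write
\begin{equation*}
 \|\grad P_tu_0 - \grad P_\tau u_0\|_{\alpha,2}^2 = \int_\X (1+|\xi|^{2\alpha})\,\big|\tfrac{z}{|z|}\cdot\xi\big|^2\,\big(e^{-t|\xi|^{2s}}-e^{-\tau|\xi|^{2s}}\big)^2\,\sigma(d\xi),
\end{equation*}
the integrand being dominated for $\tau$ near $t$ (say $\tau\geq t/2$) by $(1+|\xi|^{2\alpha})|\xi|^2\big(e^{-t|\xi|^{2s}/2}+\text{const}\big)^2\sigma(d\xi)$, or more cleanly by bounding $|\xi|^2(e^{-t|\xi|^{2s}}+e^{-\tau|\xi|^{2s}})^2$ by a $t$-dependent constant times $(1+|\xi|^{2\alpha})$; then the Lebesgue dominated convergence theorem sends the integral to $0$. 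The only mild subtlety — and the closest thing to an obstacle — is making sure the dominating function is $\sigma$-integrable uniformly for $\tau$ in a neighbourhood of $t$, which is immediate once one stays away from $\tau=0$ so that $\sup_\xi|\xi|^{2}e^{-2\tau|\xi|^{2s}}$ stays bounded; everything else is a routine repetition of the arguments already used for Lemma~\ref{linear-regularity}.
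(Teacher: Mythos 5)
Your proposal is correct and follows essentially the same route as the paper: the paper likewise computes $\|\grad P_tu_0\|_{\alpha,2}^2=\int_\X(1+|\xi|^{2\alpha})\big|\tfrac{z}{|z|}\cdot\xi\big|^2e^{-2t|\xi|^{2s}}\sigma(d\xi)$, bounds $\big|\tfrac{z}{|z|}\cdot\xi\big|\leq|\xi|$, pulls out $\sup_\xi|\xi|^2e^{-2t|\xi|^{2s}}=c_s^2t^{-1/s}$ via identity~\eqref{cs}, and handles continuity by dominated convergence as in Lemma~\ref{linear-regularity}. Your write-up merely spells out the domination step that the paper leaves implicit.
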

  \begin{proof}
        The proof is almost identical to that of Lemma~\ref{linear-regularity}.
	Let $\sigma = \Spec(u_0)$. 
        Then, because of the Cauchy-Schwarz inequality and identity~\eqref{cs}, we have 
        \begin{multline*}
          \|\grad P_tu_0\|_{\alpha,2}^2 = \int_\X (1+|\xi|^{2\alpha})\big|\tfrac{z}{|z|}\cdot\xi\big|^2\, e^{-2t|\xi|^{2s}} \sigma(d\xi) \\
          \leq \sup_{\xi\in\X}|\xi|^{2} e^{-2t|\xi|^{2s}}\int_\X(1+|\xi|^{2\alpha})\sigma(d\xi)= c_{s}^2\,t^{-1/s}\|u_0\|_{\alpha,2}^2.
        \end{multline*}
        The continuity follows in a similar fashion.
  \end{proof}
  \begin{lemma}\label{semigroup-estimate}
    There exists a constant ${c}_s$ (independent of $p$) such that
    if $u_0\in\Iso_p$ then
    \begin{equation*}
     \|\grad P_t u_0\|_p \leq {c}_s t^{-1/2s}\|u_0\|_p
    \end{equation*}
    for every $t>0$.
  \end{lemma}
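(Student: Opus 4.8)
The plan is to combine the kernel representation of $P_t$ from Lemma~\ref{kernel-linfty} with the self-similar scaling of the fractional heat kernel, reducing everything to the single statement $\nabla p_1\in L^1(\X)$.

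First, fix $t>0$, a direction $z\in\X$ and a point $x\in\X$. By Lemma~\ref{kernel-linfty} we have $P_tu_0(x)=\int_\X p_t(y)u_0(x-y)\,dy$ in the Bochner sense, and the substitution $y\mapsto y+hz$ (translation invariance of $dx$) gives $P_tu_0(x+hz)=\int_\X p_t(y+hz)\,u_0(x-y)\,dy$ for every $h>0$. Hence the difference quotient from the definition of $\grad P_tu_0(x)$ equals
\begin{equation*}
  \frac{P_tu_0(x+hz)-P_tu_0(x)}{h|z|}=\int_\X \frac{p_t(y+hz)-p_t(y)}{h|z|}\,u_0(x-y)\,dy .
\end{equation*}
Since $p_t\in C^\infty(\X)$ and $\nabla p_t\in L^1(\X)$ (the standard decay estimate for the kernel of $(-\Delta)^s$: for $s=1$ it is the Gaussian, and for $s\in(0,1)$ one has $|\nabla p_1(y)|\lesssim(1+|y|)^{-(d+2s+1)}$), the mean value theorem bounds the $\Lp$-norm of the integrand, for $h<\epsilon$, by $\big(\sup_{|w|\le\epsilon|z|}|\nabla p_t(y+w)|\big)\|u_0\|_p$, which is integrable in $y$. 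Passing to the limit $h\to0$ under the Bochner integral therefore yields
\begin{equation*}
  \grad P_tu_0(x)=\int_\X (\grad p_t)(y)\,u_0(x-y)\,dy ,\qquad \grad p_t(y)=\tfrac{z}{|z|}\cdot\nabla p_t(y),
\end{equation*}
the integral converging in the Bochner sense because, exactly as in \eqref{norm-estimate}, $\int_\X\|(\grad p_t)(y)u_0(x-y)\|_{\Lp}\,dy\le\|u_0\|_p\int_\X|\nabla p_t(y)|\,dy<\infty$. In particular $\grad P_tu_0$ is a well-defined random field in $C_b(\X,\Lp)$ (continuity and boundedness in $x$ follow from the dominated convergence theorem), and
\begin{equation*}
  \|\grad P_tu_0\|_p \le \|u_0\|_p\int_\X|\nabla p_t(y)|\,dy .
\end{equation*}

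It remains to control $\int_\X|\nabla p_t(y)|\,dy$. From $p_t(x)=\int_\X e^{ix\cdot\xi-t|\xi|^{2s}}\,d\xi$, the change of variables $\xi\mapsto t^{-1/2s}\xi$ gives the scaling identity $p_t(x)=t^{-d/2s}p_1(t^{-1/2s}x)$, hence $\nabla p_t(x)=t^{-(d+1)/2s}(\nabla p_1)(t^{-1/2s}x)$. Substituting $y\mapsto t^{1/2s}y$ we obtain
\begin{equation*}
  \int_\X|\nabla p_t(y)|\,dy = t^{-(d+1)/2s}\int_\X\big|(\nabla p_1)(t^{-1/2s}y)\big|\,dy = t^{-1/2s}\int_\X|\nabla p_1(y)|\,dy =: c_s\,t^{-1/2s},
\end{equation*}
with $c_s=\int_\X|\nabla p_1(y)|\,dy<\infty$ depending only on $s$ and $d$, and in particular independent of $p$. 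Combining the last two displays proves the claim.

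The only genuinely non-routine point is the term-by-term differentiation of the Bochner integral, i.e. the uniform domination of the difference quotients; this rests precisely on the smoothness and $L^1$-integrability of $\nabla p_t$, which is the classical kernel estimate recalled above. Everything else is a change of variables together with the triangle inequality for the Bochner integral.
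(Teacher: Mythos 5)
Your proof is correct and follows essentially the same route as the paper: the kernel representation $\grad P_tu_0(x)=\int_\X \grad p_t(y)\,u_0(x-y)\,dy$, the triangle inequality for the Bochner integral, and the self-similar scaling $p_t(y)=t^{-d/2s}p_1(t^{-1/2s}y)$ together with the decay bound $|\grad p_1(y)|\leq C(1+|y|)^{-(2s+d+1)}$ to get $\int_\X|\grad p_t(y)|\,dy=c_st^{-1/2s}$. The only difference is that you carefully justify the differentiation under the Bochner integral, a step the paper states without proof.
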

  \begin{proof}
   Notice that
   \begin{equation*}
    \grad P_t u_0 = \int_\X\grad p_t(y)u_0(x-y)\,dy.
   \end{equation*}
   It is well-known that (see~\cite{MR2373320} or \cite{MR3211862})
   \begin{equation*}
    p_t(y) = t^{-d/2s}	p_1(t^{-1/2s}y)
   \end{equation*}
for every $t>0$ and 
   \begin{equation*}
    |\grad p_1(y)| \leq C(1+|y|)^{-(2s+d+1)}.
   \end{equation*}
   This allows us to estimate
   \begin{multline*}
    \|\grad P_t u_0\|_p
    \leq \int_\X \|\grad p_t(y)u_0(x-y)\|_p\,dy  
    = \|u_0\|_p\int_\X |\grad p_t(y)|\,dy \\
    \leq t^{-(d+1)/2s}\|u_0\|_p \,C\int_\X (1+|t^{-1/2s}y|)^{-(2s+d+1)}\,dy\\
    =t^{-1/2s}\|u_0\|_p \,C\int_\X (1+|y|)^{-(2s+d+1)}\,dy.
   \end{multline*}
   The last integral is convergent and does not depend on $p$.
  \end{proof}
      The following theorem justifies calling $P_tu_0$ a solution to problem~\eqref{heat}.
  \begin{theorem}\label{semigroup-c1}
    If $u_0\in\Iso_2$ and $u(t) =P_tu_0$ then $u\in C^1((0,\infty),\Iso_2)$,
         $\dt u\in\JIso_{K,2}$ for every $K\geq 0$ and
    \begin{equation*}
      \dt u+ \fLap u = 0\quad\text{for every $t>0$}.
    \end{equation*}
  \end{theorem}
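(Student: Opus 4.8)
The plan is to carry out everything at the level of the spectral representation $u_0(x)=Eu_0(x)+\int_\X e^{ix\cdot\xi}Z(d\xi)$, with $\sigma=\Spec(u_0)$ a \emph{finite} measure, and to reduce each of the three assertions to a dominated convergence argument against $\sigma$ in which all the decay is supplied by the factor $e^{-t|\xi|^{2s}}$. This is precisely what makes the restriction to $t>0$ essential and what constitutes the only real obstacle: since $u_0$ carries no spectral moments beyond order zero, there is no integrability of $|\xi|^{2s}$ or $|\xi|^{4s}$ against $\sigma$ at our disposal, so every admissible dominating function must be manufactured from the heat factor, and the argument has to be run at each fixed $t>0$ separately and never up to $t=0$.

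First I would fix $t>0$ and note that, by Lemma~\ref{linear-regularity} applied with $\alpha=2s$, $u(t)=P_tu_0\in\Iso^{2s}_2$, so that $-\fLap u(t)(x)=\int_\X|\xi|^{2s}e^{ix\cdot\xi-t|\xi|^{2s}}Z(d\xi)$ is a genuine element of $\Iso_2$ in the sense of Definition~\ref{iso-fractional-laplacian}, with vanishing mean (Remark~\ref{flap-prop}). For $0<|h|<t/2$ the fields $u(t+h)$ and $u(t)$ arise from the \emph{same} random measure $Z$ via deterministic spectral multipliers; arguing as in the proof of Lemma~\ref{jiso} — the difference quotient being, through Lemma~\ref{kernel-linfty}, an $\|\cdot\|_2$-limit of measurable functionals of finitely many values of $u_0$, whose finite-dimensional laws inherit the invariance of those of $u_0$ — one gets $u(t)\sim u(t+h)$, hence $\tfrac{u(t+h)-u(t)}{h}\in\Iso_2$, and since $Eu(t+h)(x)=Eu(t)(x)$ the means cancel. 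Therefore
\[
\Big\|\tfrac{u(t+h)-u(t)}{h}-\big({-\fLap u(t)}\big)\Big\|_2^2=\int_\X\Big|\tfrac{e^{-(t+h)|\xi|^{2s}}-e^{-t|\xi|^{2s}}}{h}+|\xi|^{2s}e^{-t|\xi|^{2s}}\Big|^2\sigma(d\xi).
\]
The integrand tends to $0$ pointwise because $\tfrac{d}{dt}e^{-t|\xi|^{2s}}=-|\xi|^{2s}e^{-t|\xi|^{2s}}$, and for $|h|<t/2$ the mean value theorem bounds $\big|\tfrac{e^{-(t+h)|\xi|^{2s}}-e^{-t|\xi|^{2s}}}{h}\big|\le|\xi|^{2s}e^{-(t/2)|\xi|^{2s}}$, so the whole bracket is dominated by $2|\xi|^{2s}e^{-(t/2)|\xi|^{2s}}$ — a bounded function of $\xi$, hence together with its square $\sigma$-integrable. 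Dominated convergence then yields $\dt u(t)=-\fLap u(t)$ in $\Iso_2$ for every $t>0$, which is the equation.

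Next I would establish continuity of $t\mapsto\dt u(t)$ on $(0,\infty)$ by the same scheme: for $t,\tau\ge\delta>0$,
\[
\|\dt u(t)-\dt u(\tau)\|_2^2=\int_\X|\xi|^{4s}\big(e^{-t|\xi|^{2s}}-e^{-\tau|\xi|^{2s}}\big)^2\sigma(d\xi),
\]
the integrand being dominated by $4|\xi|^{4s}e^{-2\delta|\xi|^{2s}}$ (bounded in $\xi$, hence $\sigma$-integrable) and tending to $0$ as $\tau\to t$; this gives $u\in C^1((0,\infty),\Iso_2)$. The estimate $\sup_\xi|\xi|^{4s}e^{-2t|\xi|^{2s}}=c_{s,2s}^2\,t^{-2}$ from \eqref{cs} further yields $\|\dt u(t)\|_2\le c_{s,2s}\,t^{-1}\|u_0\|_2$, which controls the Bielecki norm $\K{\dt u}{2}$ on any interval bounded away from $0$.

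Finally, for the joint isometry-invariance, I would fix $\phi\in\Phi$ and observe $\phi(\dt u)(t,x)=\dt u(t,\phi(x))=\int_\X|\xi|^{2s}e^{i\phi(x)\cdot\xi-t|\xi|^{2s}}Z(d\xi)$; the same ``$\|\cdot\|_2$-limit of finite measurable functionals of $u_0$'' argument already used for the relation $\sim$ shows that the finite-dimensional laws of $\dt u$ coincide with those of $\phi(\dt u)$, i.e. $\phi(\dt u)\=\dt u$. Together with the norm control of the previous paragraph this places $\dt u$ in $\JIso_{K,2}$ for every $K\ge0$, completing the argument. The delicate points, as anticipated, are the construction of the $h$-uniform envelope $|\xi|^{2s}e^{-(t/2)|\xi|^{2s}}$ and the verification that the difference quotient genuinely belongs to the non-linear set $\Iso_2$ (via $u(t)\sim u(t+h)$); everything else is routine dominated convergence.
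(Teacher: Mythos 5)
Your proof is correct and follows essentially the same route as the paper: represent everything spectrally through the single random measure $Z$, reduce the difference quotient, the continuity of $\dt u$, and the bound $\|\dt u(t)\|_2\leq c\,t^{-1}\|u_0\|_2$ to dominated convergence against the finite measure $\sigma$, with the dominating envelope supplied by the factor $e^{-t|\xi|^{2s}}$ (exactly the paper's use of \eqref{cs}). You are merely more explicit than the paper about the $h$-uniform envelope, the membership of the difference quotient in $\Iso_2$ via $u(t)\sim u(t+h)$, and the invariance of $\dt u$, all of which the paper leaves implicit.
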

  \begin{proof}
    It follows from Lemma~\ref{linear-regularity} that $\fLap u(t)\in\Iso_2$ for every $t>0$.
    Let $\sigma=\Spec(u_0)$. Then $\Spec(u(t)) =  e^{-2t|\xi|^{2s}}\sigma(d\xi)$ and due to identity~\eqref{derivative} and Definition~\ref{iso-fractional-laplacian} it suffices to consider
    \begin{equation*}
      E\Big|\frac{u(t+h)-u(t)}{h}+\fLap u(t)\Big|^2 
      = \int_\X\Big(\frac{e^{-h|\xi|^{2s}}-1}{h}-|\xi|^{2s}\Big)^2 e^{-2t|\xi|^{2s}}d\sigma(\xi).
    \end{equation*}
    Since $\sigma$ is a finite measure we may pass to the limit with $h\to0$ on both sides and use the
    Lebesgue dominated convergence theorem to obtain
    \begin{equation*}
     \dt u(t) = -\fLap u(t)
    \end{equation*}
    for every $t>0$. Moreover, we have (see estimate~\eqref{cs})
    \begin{equation*}
     \|\dt u(t)\|_2^2 = \int_\X|\xi|^{4s}e^{-2t|\xi|^{2s}}\sigma(d\xi)\leq c_s t^{-2}\|u_0\|_2^2,
    \end{equation*}
    therefore $\dt u \in \JIso_{K,2}$ for every $K\geq 0$.
  \end{proof}
  \begin{remark}
    The analysis of the linear problem is exposed in more detail in \cite{MR3628179}.
    However, here we use a different definition of solutions, which is better suited for the nonlinear case which we
    discuss in the sequel.
  \end{remark}
    \end{section}
    \begin{section}{Equation with Lipschitz nonlinearity}\label{sec-lipschitz}
     Let us consider the following initial value problem
     \begin{equation}\label{burgers}
      \left\{
      \begin{aligned}
        &\dt u + \fLap u = \grad f(u)\quad&\text{on $\T\times\X$}, \\
        &u(0) \= u_0\quad&\text{on $\X$}.
      \end{aligned}
      \right.
    \end{equation}
    Here we assume $s\in(\frac{1}{2},1]$, $u_0\in\Iso_p$ and the function $f:\R\to\R$ is to be Lipschitz, i.e. for every $x,y\in\R$ and some constant $L>0$ we have $|f(x)-f(y)| \leq L |x-y|$,
    and such that $f(0)=0$.
    In the following, by referring to~problem~\eqref{burgers}, we also quietly include these assumptions.
    \begin{subsection}{Existence of solutions}
  \begin{definition}\label{definition}
  Given $u\in\JIso_{K,2}$ for some $K\geq 0$, we define the following nonlinear operator
  \begin{equation*}
    F(u)(t) = P_{t}u(0) + \int_0^{t} \grad P_{t-\tau} f(u(\tau))\,d\tau.
  \end{equation*} 
   Let $u_0\in\Iso_p$. For $K\geq0$ we say that $u\in\JIso_{K,p}$ is a solution to problem~\eqref{burgers} if $F(u)=u$ and $u_0\=u(0)$.
  \end{definition}
  \begin{lemma}\label{Fimage}
   For every $K>0$ and $2\leq p\leq\infty$ if $u\in\JIso_{K,p}$ then $F(u)\in\JIso_{K,p}$.
  \end{lemma}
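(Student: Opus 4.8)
The plan is to split $F(u)(t)=P_{t}u(0)+G(t)$, where $G(t)=\int_0^{t}\grad P_{t-\tau}f(u(\tau))\,d\tau$, and to treat the two summands separately. The linear part is immediate: since $u\in\JIso_{K,p}\subset C(\T,\Iso_p)$ we have $u(0)\in\Iso_p$, so Lemma~\ref{jiso} gives $P_{t}u(0)\in\JIso_{K,p}$ together with $\sup_{t\ge0}e^{-tK}\|P_{t}u(0)\|_p\le\|u(0)\|_p$. All of the remaining work concerns the Duhamel term $G$.

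First I would make sense of $G(t)$ as a Bochner integral in $C_b(\X,\Lp)$. Because $f$ is Lipschitz with $f(0)=0$, one has $\|f(a)-f(b)\|_{\Lp}\le L\|a-b\|_{\Lp}$ for $\Lp$-valued random variables, hence $f(u(\tau))\in C_b(\X,\Lp)$ with $\|f(u(\tau))\|_p\le L\|u(\tau)\|_p$; since $u(\tau)\=\phi u(\tau)$ for every $\phi\in\Phi$ (inherited from $u\=\phi u$), also $f(u(\tau))\=\phi f(u(\tau))$, so $f(u(\tau))\in\Iso_p$, and $\tau\mapsto f(u(\tau))$ is continuous into $\Iso_p$ because $\|f(u(\tau))-f(u(\tau'))\|_p\le L\|u(\tau)-u(\tau')\|_p$. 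Applying Lemma~\ref{semigroup-estimate} to $f(u(\tau))\in\Iso_p$ then gives $\|\grad P_{t-\tau}f(u(\tau))\|_p\le c_s(t-\tau)^{-1/2s}\|f(u(\tau))\|_p\le c_sL\,(t-\tau)^{-1/2s}e^{K\tau}\K{u}{p}$. The exponent $\tfrac{1}{2s}$ lies in $[\tfrac12,1)$ exactly because $s\in(\tfrac12,1]$ — this is the single place where the assumption $s>\tfrac12$ is used — so $\tau\mapsto(t-\tau)^{-1/2s}$ is integrable on $(0,t)$ and the Bochner integral defining $G(t)$ converges in $C_b(\X,\Lp)$.

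Next, the Bielecki bound: multiplying by $e^{-tK}$ and substituting $\sigma=t-\tau$ yields $e^{-tK}\|G(t)\|_p\le c_sL\K{u}{p}\int_0^{t}\sigma^{-1/2s}e^{-K\sigma}\,d\sigma\le c_sL\K{u}{p}\int_0^{\infty}\sigma^{-1/2s}e^{-K\sigma}\,d\sigma$, and the last integral is a finite constant depending only on $s$ and $K$ — equal to $\Gamma(1-\tfrac{1}{2s})\,K^{1/2s-1}$ — precisely because $K>0$; this is where the hypothesis $K>0$ (rather than merely $K\ge0$) enters. Together with the linear part we get $\K{F(u)}{p}\le\|u(0)\|_p+C(s,L,K)\K{u}{p}<\infty$, so $F(u)\in\mathcal{B}_{K,p}$. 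Continuity of $t\mapsto F(u)(t)$ into $\Iso_p$ I would obtain by dominated convergence after the rescaling $\tau=tr$, using that the resulting majorant $c_sL\K{u}{p}\,t^{\,1-1/2s}(1-r)^{-1/2s}e^{Ktr}$ is locally bounded in $t$ (again $1-\tfrac{1}{2s}>0$) and integrable in $r$ on $(0,1)$. Finally, for isometry-invariance: $P_{t}u(0)$ and each $\grad P_{t-\tau}f(u(\tau))$ is obtained from $u$ by operations that intertwine with the action of $\Phi$ — $f$ is applied pointwise, and since $p_t$ is radially symmetric, $P_t$ (hence $\grad P_t$) acts on isometry-invariant fields exactly as in the proof of Lemma~\ref{jiso} — so, using $u\=\phi u$, the whole family $\{P_{t}u(0)\}\cup\{\grad P_{t-\tau}f(u(\tau))\}_{\tau}$ is jointly isometry-invariant. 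Since $G(t)$ is a norm-limit of Riemann sums of these fields, $\JIso_{K,p}$ is closed, and any $\R$-linear combination of a jointly isometry-invariant family is again isometry-invariant, the field $F(u)(t)=P_{t}u(0)+G(t)$ satisfies $F(u)\=\phi F(u)$ for every $\phi\in\Phi$, hence $F(u)\in\JIso_{K,p}$.

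The step I expect to be the main obstacle is making the Duhamel term rigorous: fixing the right space ($C_b(\X,\Lp)$, then cutting down to the closed subspace $\Iso_p$), verifying Bochner measurability and the integrability of $\tau\mapsto\grad P_{t-\tau}f(u(\tau))$ near the singular endpoint $\tau=t$ — which is exactly why $s>\tfrac12$ is imposed — and then the $t$-uniform Bielecki estimate, which uses $K>0$. The isometry-invariance bookkeeping is essentially a repetition of the argument in Lemma~\ref{jiso}, and the continuity is a routine dominated-convergence argument.
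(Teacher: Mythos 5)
Your proof is correct and follows essentially the same route as the paper: the Duhamel term is controlled via Lemma~\ref{semigroup-estimate} and the Gamma-function bound $\int_0^t(t-\tau)^{-1/2s}e^{-K(t-\tau)}d\tau\leq K^{-1+\frac{1}{2s}}\Gamma(1-\tfrac{1}{2s})$, and invariance comes from commuting $\phi$ through $P_t$, $\grad P_t$ and the pointwise map $f$ so that $\phi F(u)=F(\phi u)\=F(u)$. You merely spell out details the paper leaves implicit (Bochner convergence, continuity in $t$, the Riemann-sum/closedness justification of the distributional identity), which is fine but not a different argument.
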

  \begin{proof}
   First we estimate the norm to check if $F(u)\in \mathcal{B}_{K,p}$.  We have
   \begin{equation*}
       \K{F(u)}{p}
       = \sup_{t\geq0}e^{-tK}\Big\|P_{t}u(0) + \int_0^{t} \grad P_{t-\tau} f(u(\tau))\,d\tau\Big\|_p.
    \end{equation*}
    It follows from Lemma~\ref{semigroup-estimate} that 
    \begin{multline}\label{norm-est1}
            \Big\|\int_0^{t} \grad P_{t-\tau} f(u(\tau))\,d\tau\Big\|_p
            \leq c_sL\int_0^{t} (t-\tau)^{-1/2s}\|u(\tau)\|_p\,d\tau\\
            \leq c_sL\,\Big(\sup_{0<\tau<t}e^{-\tau K}\|u(\tau)\|_p\Big) \int_0^{t} (t-\tau)^{-1/2s} e^{\tau K}\,d\tau.
    \end{multline}
    Using the $\Gamma$ function we estimate
    \begin{multline}\label{gamma}
     \int_0^{t} (t-\tau)^{-1/2s} e^{-K(t-\tau)}d\tau < K^{-1+\frac{1}{2s}}\int_0^{\infty} z^{-1/2s}e^{-z}\,dz \\= K^{-1+\frac{1}{2s}}\Gamma(1-\tfrac{1}{2s}),
    \end{multline}
    which gives us
    \begin{align*}
     \K{F(u)}{p} \leq \K{P_tu_0}{p} + c_sL K^{-1+\frac{1}{2s}}\Gamma(1-\tfrac{1}{2s}) \K{u}{p}.
    \end{align*}
   Then for every isometry $\phi\in\Phi$ we observe
   \begin{multline*}
    \phi F(u) = \phi\Big(P_t u(0) + \int_0^t\grad P_{t-\tau}f(u(\tau))\,d\tau\Big) \\
    =\phi\big(P_t u(0)\big) + \phi\Big(\int_0^t\grad P_{t-\tau}f(u(\tau))\,d\tau\Big) \\
    = P_t (\phi u(0)) + \int_0^t\grad P_{t-\tau}f(\phi u(\tau))\,d\tau = F(\phi u).
   \end{multline*}
   Finally, since $\phi u \= u$, we have $F(\phi(u)) \= F(u)$ and therefore $\phi F(u) \= F(u)$.
  \end{proof}
  \begin{lemma}\label{contraction}
    If $u,v\in\JIso_{K,p}$ and $u\sim v$ then
    \begin{equation*}
        \K{F(u)-F(v)}{p} \leq \|u(0)-v(0)\|_p + c_sLK^{-1+\frac{1}{2s}}\Gamma(1-\tfrac{1}{2s})\K{u-v}{p}.
    \end{equation*}
  \begin{proof}
    First we notice that $f(u)\sim f(v)$ and therefore
    \begin{equation*}
     \grad P_t f(u) - \grad P_tf(v) = \grad P_t (f(u)-f(v)).
    \end{equation*}
    This gives us the following inequality
     \begin{multline*}
      \|F(u)(t)-F(v)(t)\|_p 
      \\\leq \|P_t(u(0)-v(0))\|_p +\int_0^{t}\big\|\grad P_{t-\tau} (f(u(\tau))-f(v(\tau)))\big\|_p\,d\tau.
      \end{multline*}
      Similarly to estimate \eqref{norm-est1} we have
      \begin{multline*}
      \int_0^{t}\big\|\grad P_{t-\tau} (f(u(\tau))-f(v(\tau)))\big\|_p\,d\tau\\
      \leq c_sL\,\Big(\sup_{0\leq\tau\leq t}e^{-\tau K}\|u(\tau)-v(\tau)\|_p\Big)\int_0^{t} (t-\tau)^{-1/2s}e^{\tau K}\,d\tau.
    \end{multline*}
    We combine it with estimate \eqref{gamma} and Lemma~\ref{kernel-linfty} to obtain
    \begin{multline*}
      \K{F(u)-F(v)}{p} = \sup_{t\geq0}e^{-tK}\|F(u)(t)-F(v)(t)\|_p \\
      \leq \|u(0)-v(0)\|_p + c_sLK^{-1+\frac{1}{2s}}\Gamma(1-\tfrac{1}{2s})\K{u-v}{p}.\qedhere
    \end{multline*}
  \end{proof}
  \end{lemma}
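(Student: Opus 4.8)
The plan is to compare the two Duhamel expressions $F(u)(t)$ and $F(v)(t)$ term by term, invoking the hypothesis $u\sim v$ exactly where the non-linearity of $\JIso_{K,p}$ would otherwise be an obstruction, and then to dominate the resulting convolution integral by a $\Gamma$-function as in~\eqref{gamma}. The first step is to observe that, since $u\sim v$, composing with the measurable function $f$ gives $f(u(\tau))\sim f(v(\tau))$ for every $\tau\geq0$, and also $u(0)\sim v(0)$; hence $f(u(\tau))-f(v(\tau))$ and $u(0)-v(0)$ genuinely lie in $\Iso_p$, so the linear operators $P_t$ and $\grad P_{t-\tau}$ distribute over these differences. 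Subtracting and applying the triangle inequality for the Bochner integral yields
\[
\|F(u)(t)-F(v)(t)\|_p \le \|P_t(u(0)-v(0))\|_p + \int_0^{t}\big\|\grad P_{t-\tau}\big(f(u(\tau))-f(v(\tau))\big)\big\|_p\,d\tau .
\]

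For the first term I would use the contractivity of $P_t$ on $\Iso_p$ from Lemma~\ref{kernel-linfty}, so that $\|P_t(u(0)-v(0))\|_p\le\|u(0)-v(0)\|_p$. For the integrand, the smoothing estimate of Lemma~\ref{semigroup-estimate} together with the Lipschitz bound on $f$ gives
\[
\big\|\grad P_{t-\tau}\big(f(u(\tau))-f(v(\tau))\big)\big\|_p \le c_s L\,(t-\tau)^{-1/2s}\,\|u(\tau)-v(\tau)\|_p .
\]
Bounding $\|u(\tau)-v(\tau)\|_p\le e^{\tau K}\K{u-v}{p}$, multiplying through by $e^{-tK}$, and taking the supremum over $t\ge0$ reduces everything to estimating $\int_0^t(t-\tau)^{-1/2s}e^{-K(t-\tau)}\,d\tau$; the substitutions $z=t-\tau$ and then $w=Kz$ bound this, uniformly in $t$, by $K^{-1+\frac1{2s}}\Gamma(1-\tfrac1{2s})$, which is precisely~\eqref{gamma}. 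Collecting the two contributions gives the asserted inequality.

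The only point requiring real care — and the reason $u\sim v$ cannot be dropped — is the opening move: since $\Iso_p$ is not a linear space, $f(u(\tau))-f(v(\tau))$ is a priori merely an element of $C_b(\X,\Lp)$, and it is exactly the relation $f(u)\sim f(v)$ that guarantees it belongs to $\Iso_p$ and that $\grad P_{t-\tau}$ may be applied to it as a linear operator. The convergence of the $\Gamma$-integral at the endpoint $\tau=t$ uses $\tfrac1{2s}<1$, i.e. $s>\tfrac12$, which is part of the standing assumptions on~\eqref{burgers}; apart from this, the computation is a routine variant of the one already carried out in Lemma~\ref{Fimage}.
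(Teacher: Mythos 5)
Your proposal is correct and follows essentially the same route as the paper: use $u\sim v$ to get $f(u)\sim f(v)$ so that $\grad P_{t-\tau}$ distributes over the difference, then combine the contractivity of $P_t$ (Lemma~\ref{kernel-linfty}), the smoothing bound of Lemma~\ref{semigroup-estimate} with the Lipschitz constant, and the $\Gamma$-integral estimate~\eqref{gamma}. Your remark isolating the relation $u\sim v$ as the point that repairs the non-linearity of $\Iso_p$ is exactly the role it plays in the paper's argument.
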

  \begin{theorem}\label{existence}
    Let $\frac{1}{2}<s\leq 1$ and $2\leq p\leq\infty$. There exists a constant $K_0$ such that
    for every $u_0\in\Iso_p$ and every $K\geq K_0$ the sequence 
     \begin{equation}\label{picard}
      \left\{
      \begin{aligned}
        &u_1 = P_t u_0, \\
        &u_{n+1}=F(u_{n}) = F^{n}(u_1)
      \end{aligned}
      \right.
    \end{equation}
    converges in $\JIso_{K,p}$ to a solution of problem~\eqref{burgers}.
  \end{theorem}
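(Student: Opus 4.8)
The plan is to apply the Banach fixed point theorem in the complete metric space $\JIso_{K,p}$, using the estimates from Lemmas~\ref{Fimage} and~\ref{contraction}. First I would fix the initial datum $u_0 \in \Iso_p$ and observe, via Lemma~\ref{jiso}, that $u_1 = P_t u_0 \in \JIso_{K,p}$ for every $K \geq 0$, so the Picard iteration~\eqref{picard} is well-defined: by Lemma~\ref{Fimage}, each $u_{n+1} = F(u_n)$ again lies in $\JIso_{K,p}$. The key point enabling the contraction estimate is that consecutive iterates are related by the equivalence $\sim$: since $u_{n+1} = F(u_n)$ and $u_n$ are both built by applying the same measurable operations ($P_t$, $\grad$, $f$, Bochner integration) to $u_0$, one checks inductively that $u_n \sim u_{n+1}$ for all $n$, so that $u_n - u_{n+1} \in \Iso_p$ and Lemma~\ref{contraction} applies to the pair $(u_n, u_{n+1})$.

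Next I would choose the constant. Set $\theta(K) = c_s L K^{-1 + \frac{1}{2s}} \Gamma(1 - \tfrac{1}{2s})$; since $s > \tfrac12$ we have $-1 + \tfrac{1}{2s} < 0$, so $\theta(K) \to 0$ as $K \to \infty$. Pick $K_0$ so large that $\theta(K) \leq \tfrac12$ for all $K \geq K_0$. Fix any such $K$. Because $u_n(0) = u_0 \= u_1(0)$ for every $n$ (all iterates share the same initial value $P_0 u_0 = u_0$), Lemma~\ref{contraction} applied to $u_n \sim u_{n+1}$ gives
\begin{equation*}
  \K{u_{n+1} - u_{n+2}}{p} = \K{F(u_n) - F(u_{n+1})}{p} \leq \theta(K)\, \K{u_n - u_{n+1}}{p} \leq \tfrac12 \K{u_n - u_{n+1}}{p},
\end{equation*}
the boundary term $\|u_n(0) - u_{n+1}(0)\|_p$ vanishing. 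Hence $(u_n)$ is Cauchy in $\JIso_{K,p}$, and by completeness of that space it converges to some $u \in \JIso_{K,p}$.

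It then remains to verify that the limit $u$ is a solution in the sense of Definition~\ref{definition}, i.e. $F(u) = u$ and $u_0 \= u(0)$. The second is immediate since $u(0) = \lim_n u_n(0) = u_0$ in $\Lp$, hence in distribution. For the first, I would pass to the limit in $u_{n+1} = F(u_n)$: one needs $F$ to be continuous at $u$ along the sequence, which follows from the same computation as in Lemma~\ref{contraction} — strictly speaking Lemma~\ref{contraction} requires $u_n \sim u$, and this passes to the limit from $u_n \sim u_m$ since $\sim$ is preserved under the distributional limit (equivalently, one applies Lemma~\ref{contraction} to the pair $(u_n, u_m)$ and lets $m \to \infty$ first to bound $\K{F(u_n) - F(u)}{p}$). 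Thus $u = \lim_n u_{n+1} = \lim_n F(u_n) = F(u)$.

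The main obstacle is the bookkeeping around the non-transitive relation $\sim$: Lemma~\ref{contraction} only estimates $\K{F(u) - F(v)}{p}$ when $u \sim v$, so one must be careful that the contraction is only ever invoked on genuinely $\sim$-related pairs (consecutive iterates, or an iterate against the limit), never chaining $\sim$ through a third field. Establishing $u_n \sim u_{n+1}$ inductively — using that $F$ produces $\sim$-related outputs from $\sim$-related inputs, together with $P_t u_0 \sim P_t u_0$ trivially and the remarks on $f(u) \sim f(v)$ — is the one step deserving genuine care; everything else is a routine application of the contraction mapping principle.
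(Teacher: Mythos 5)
Your proposal is correct and follows essentially the same route as the paper: Picard iterates kept in $\JIso_{K,p}$ via Lemmas~\ref{jiso} and~\ref{Fimage}, the contraction estimate of Lemma~\ref{contraction} applied to $\sim$-related iterates with the boundary term vanishing because all iterates share the initial value $u_0$, and $K_0$ chosen so that $c_sLK_0^{-1+\frac{1}{2s}}\Gamma(1-\tfrac{1}{2s})<1$ using $s>\tfrac12$. In fact you spell out the points the paper leaves implicit under its appeal to the Banach fixed point theorem --- why $u_n\sim u_m$, that $\sim$ passes to the limit so Lemma~\ref{contraction} may be applied to $(u_n,u)$, and hence $F(u_n)\to F(u)$ giving $F(u)=u$ --- which is exactly the care the non-transitive relation $\sim$ calls for.
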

  \begin{proof}
  It follows from Lemma~\ref{jiso} that $u_1\in\JIso_{K,p}$ for every $K>0$,
  and then from Lemma~\ref{Fimage} that $\{u_n\}\subset\JIso_{K,p}$.
  Let us notice that $u_n\sim u_m$ and $u_n(0)=u_m(0)=u_0$.
  Thus by Lemma~\ref{contraction} we have
  \begin{equation*}
      \K{F(u_n)-F(u_m)}{p}\leq c_sLK^{-1+\frac{1}{2s}}\Gamma(1-\tfrac{1}{2s})\K{u_n-u_m}{p}.
  \end{equation*}
  We now choose such $K_0$ that $c_sLK_0^{-1+\frac{1}{2s}}\Gamma(1-\frac{1}{2s})<1$, depending on $s$ and $L$. 
  It follows from the Banach fixed point theorem that $\{u_n\}$ is a Cauchy sequence in the space
  $\JIso_{K,p}$ for every $K\geq K_0$ (we use the assumption $s>\frac{1}{2}$)
  and converges to some $u\in\JIso_{K,p}$ which is a fixed point of $F$.
  \end{proof}
\begin{definition}
 We refer to the solution constructed in Theorem~\ref{existence} as the Picard solution.
\end{definition}
  \begin{corollary}\label{uniqueness}
      If $u$ is the Picard solution to~\eqref{burgers} then for every $h>0$ and every $t>0$
      we have
      \begin{equation*}
          u(t+h) = P_hu(t)+\int_t^{t+h}\grad P_{t+h-\tau}f(u(\tau))\,d\tau.
      \end{equation*}
  \end{corollary}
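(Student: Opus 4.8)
The plan is to use the fixed-point identity $F(u)=u$ together with the semigroup property of $\{P_t\}$ to ``restart'' the Duhamel formula at time $t$. Writing out $u(t+h)=F(u)(t+h)$ we have
\begin{equation*}
  u(t+h) = P_{t+h}u(0) + \int_0^{t+h}\grad P_{t+h-\tau}f(u(\tau))\,d\tau.
\end{equation*}
The strategy is to split the integral at $\tau=t$ and recognise the first part, together with the $P_{t+h}u(0)$ term, as $P_h$ applied to $u(t)=F(u)(t)$.

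First I would use the semigroup property $P_{t+h}u(0)=P_h(P_tu(0))$ (established in the remark following Definition~\ref{semigroup-definition}, and also valid in $\Iso_p$ via Lemma~\ref{kernel-linfty}). Second, for the integral over $(0,t)$, I would write $P_{t+h-\tau}=P_h P_{t-\tau}$ for $\tau\in(0,t)$, so that
\begin{equation*}
  \int_0^{t}\grad P_{t+h-\tau}f(u(\tau))\,d\tau = P_h\Big(\int_0^{t}\grad P_{t-\tau}f(u(\tau))\,d\tau\Big),
\end{equation*}
where the key point is that $\grad$ commutes with $P_h$ (both act as Fourier multipliers, cf. Remark~\ref{spectral-derivative} and Definition~\ref{semigroup-definition}) and that $P_h$ may be pulled out of the Bochner integral since it is a bounded linear operator on $C_b(\X,\Lp)$ by Lemma~\ref{kernel-linfty}. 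Combining these two observations, the sum $P_{t+h}u(0)+\int_0^t\grad P_{t+h-\tau}f(u(\tau))\,d\tau$ equals $P_h\big(P_tu(0)+\int_0^t\grad P_{t-\tau}f(u(\tau))\,d\tau\big)=P_h\big(F(u)(t)\big)=P_h u(t)$. The remaining term is exactly $\int_t^{t+h}\grad P_{t+h-\tau}f(u(\tau))\,d\tau$, giving the claimed identity.

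The main obstacle — really the only nontrivial point — is justifying the interchange of $P_h$ and $\grad$ with the Bochner integral over $(0,t)$. I would address this by noting that $P_h$ is a bounded linear map on $C_b(\X,\Lp)$ (Lemma~\ref{kernel-linfty}), that $\grad P_{t-\tau}$ maps into this space with the integrable-in-$\tau$ bound $c_s(t-\tau)^{-1/2s}\|f(u(\tau))\|_p$ from Lemma~\ref{semigroup-estimate} (with $s>\tfrac12$ so the singularity is integrable), hence the integrand $\tau\mapsto\grad P_{t-\tau}f(u(\tau))$ is Bochner integrable and the Hille theorem lets $P_h$ pass through the integral; then $\grad P_{t+h-\tau}f(u(\tau)) = P_h\grad P_{t-\tau}f(u(\tau))$ pointwise in $\tau$ by the multiplier representation. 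Everything else is bookkeeping with the semigroup law and the definition of $F$.
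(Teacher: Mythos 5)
Your proposal is correct and rests on the same computation as the paper: split the Duhamel integral at $\tau=t$, use the semigroup law $P_{t+h}=P_hP_t$ and the commutation $\grad P_{t+h-\tau}=P_h\grad P_{t-\tau}$ (pulled through the Bochner integral via Hille, with integrability from Lemma~\ref{semigroup-estimate} and $s>\tfrac12$), and recognise $P_hu(t)$. The only difference is cosmetic: the paper carries out this identity for each Picard iterate $u_n=F(u_{n-1})$ and then lets $n\to\infty$ using $\|u_n-u\|_p\to0$, whereas you apply it directly to the fixed point $u=F(u)$, which is slightly more direct and equally valid.
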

  \begin{proof}
   Let $u_n$ be the sequence of Picard iterations as defined in \eqref{picard}.
   Since $u_{n} = F(u_{n-1})$ for $n\geq 2$ we have
   \begin{align*}
    P_hu_n(t) &= P_hP_tu_0 + P_h\int_0^t\grad P_{t-\tau}f(u_{n-1}(\tau))\,d\tau\\
        &= P_{t+h}u_0 + \int_0^t\grad P_{t+h-\tau}f(u_{n-1}(\tau))\,d\tau
   \end{align*}
    and
    \begin{equation*}
     u_n(t+h) = P_{t+h}u_0 + \int_0^{t+h}\grad P_{t+h-\tau}f(u_{n-1}(\tau))\,d\tau.
    \end{equation*}
    Hence
    \begin{equation*}ope
     u_n(t+h) - P_hu_n(t) = \int_t^{t+h}\grad P_{t+h-\tau}f(u_{n-1}(\tau))\,d\tau
    \end{equation*}
    and finally, because $\lim_{n\to\infty} \|u_n -u\|_2 =0$, we get
    \begin{equation*}
     u(t+h) - P_hu(t) = \int_t^{t+h}\grad P_{t+h-\tau}f(u(\tau))\,d\tau.
    \end{equation*}
  \end{proof}
  \begin{remark}\label{uniqueness-remark}
   In this paper we do not consider the question of uniqueness of solutions and
   indeed, Definition~\ref{definition} may be too relaxed to ascertain it.
   In~\cite{MR3628179} it is shown that the semigroup solution $P_tu_0$ is
   in fact the unique solution to the linear problem under additional
   continuity-in-time assumptions. In the remainder we only work with
   the Picard solutions to problem~\eqref{burgers}, which are well-defined.
  \end{remark}
    \end{subsection}
\end{section}
\begin{section}{Regularity of solutions}\label{apriori}
\begin{subsection}{Moment estimates}
     In the first part of this section we reproduce the second moment estimates presented in~\cite{MR0264252}.
     We are only able to do this while assuming higher regularity of the initial condition, namely $u_0\in\Iso_2^1$.
     Despite this limitation, the result is interesting because of an elegant identity described in Remark~\ref{covariance}. In the sequel we obtain weaker (but sufficient) estimates for all moments in a more general
     setting.
     
     \begin{lemma}\label{infinitesimal-linearisation}
     Suppose $u$ is the Picard solution to~\eqref{burgers}.
     Let $g:\R\to\R$ be a measurable function such that $g(u(t))\in\Iso_2$ for some $t\geq0$.
     Then for every $h>0$ we have 
     \begin{equation*}
      Eu(t+h)g(u(t)) = EP_hu(t)g(u(t)) + hR(h),
     \end{equation*}
     where $R:\T\to\R$ is a function such that $\lim_{h\to0}R(h)=0$.
    \end{lemma}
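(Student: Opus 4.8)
The plan is to reduce the statement to an application of Lemma~\ref{derivative-zero}. Corollary~\ref{uniqueness} (and, for $t=0$, the defining relation $u=F(u)$ of Definition~\ref{definition}) gives
$$u(t+h)-P_hu(t)=\int_t^{t+h}\grad P_{t+h-\tau}f(u(\tau))\,d\tau,$$
so that $Eu(t+h)g(u(t))-EP_hu(t)g(u(t))=E\big[(u(t+h)-P_hu(t))\,g(u(t))\big]$. First I would justify passing the expectation under the time integral. By Lemma~\ref{semigroup-estimate} and the bound $\|f(u(\tau))\|_2\le L\|u(\tau)\|_2$ (which uses $f(0)=0$), together with the fact that $u\in\JIso_{K,2}$ keeps $\|u(\tau)\|_2$ bounded by some $M_t$ for $\tau\in[t,t+h]$,
$$\int_t^{t+h}\big\|\grad P_{t+h-\tau}f(u(\tau))\big\|_2\,d\tau\le c_sLM_t\int_0^{h}r^{-1/2s}\,dr=\frac{c_sLM_t}{1-\frac1{2s}}\,h^{\,1-\frac1{2s}},$$
which is finite precisely because $s>\tfrac12$. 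Hence, by Fubini and Cauchy--Schwarz, $Eu(t+h)g(u(t))-EP_hu(t)g(u(t))=\int_t^{t+h}E\big[\grad P_{t+h-\tau}f(u(\tau))(x)\,g(u(t))(x)\big]\,d\tau$.

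The key point — which in fact yields the \emph{stronger} conclusion that this difference is exactly $0$, i.e.\ one may take $R\equiv0$ — is that every integrand vanishes. Fix $\tau\in[t,t+h)$ and put $w=f(u(\tau))$, $v=g(u(t))$, $r=t+h-\tau>0$. Since $f$ is Lipschitz with $f(0)=0$ and $u(\tau)\in\Iso_2$, we have $w\in\Iso_2$; since $u\in\JIso_{K,2}$, the pair $(u(\tau),u(t))$ is jointly isometry-invariant, so applying the pointwise map $(a,b)\mapsto(f(a),g(b))$ gives $w\sim v$. By Lemma~\ref{kernel-linfty}, $P_r w=p_r*w$, and as $p_r$ is radial the convolution commutes with every isometry; unwinding $w\sim v$ through this convolution gives $P_r w\sim v$. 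By Lemma~\ref{linear-regularity}, $P_r w\in\Iso_2^1$, hence $P_r w\in C^1_b(\X,\Ld)$ by Proposition~\ref{iso-c1}. Lemma~\ref{derivative-zero}, applied with $P_r w$ in the role of $u$, now yields $E\big[\grad P_r w(x)\,v(x)\big]=0$.

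Combining the two steps, $Eu(t+h)g(u(t))=EP_hu(t)g(u(t))$ for every $h>0$, so the assertion holds with $R\equiv0$, in particular $\lim_{h\to0}R(h)=0$. The main obstacle is the claim $P_r w\sim v$: the relation $\sim$ is neither transitive nor manifestly stable under the integral defining $P_r$, which mixes the values of $w$ over all of $\X$, so it cannot be manipulated formally. The honest argument returns to the joint space--time distributional invariance built into $\JIso_{K,2}$: one approximates $p_r*w$ by Riemann sums, which are measurable functionals of finitely many values of $w$ taken jointly with $v$, uses the invariance of $u$ (hence of $(w,v)$) under the substitution $\phi(y)=Ay+b$ with $A\in O(d)$, and exploits $p_r(A^{-1}\,\cdot\,)=p_r$ to identify the limit of the transformed sums as $P_r w\circ\phi$. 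Everything else — the Bochner integrability needed for Fubini, the membership $P_r w\in\Iso_2^1$, and the continuity statements — is routine given the estimates already established.
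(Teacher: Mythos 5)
Your argument is correct, but it takes a genuinely different route from the paper and in fact proves a stronger statement. The paper keeps the Duhamel remainder $hR(h)=E\int_t^{t+h}\grad P_{t+h-\tau}f(u(\tau))\,d\tau\,g(u(t))$ intact and only shows $R(h)\to0$: by the Lebesgue differentiation theorem the averaged integrand tends to $E\,\grad f(u(t))\,g(u(t))$ as $h\to0$, which vanishes by Lemma~\ref{derivative-zero}. You instead show that each integrand $E\,\grad P_{t+h-\tau}f(u(\tau))(x)\,g(u(t))(x)$ is already zero for every $\tau<t+h$, by applying Lemma~\ref{derivative-zero} to the smoothed field $P_{t+h-\tau}f(u(\tau))$, whose membership in $\Iso^1_2\subset C^1_b(\X,\Ld)$ comes for free from Lemma~\ref{linear-regularity} and Proposition~\ref{iso-c1}; hence $R\equiv0$. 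What your version buys: it avoids the $h\to0$ differentiation step and, more importantly, it never needs $\grad f(u(t))$ to exist, whereas the paper's proof implicitly treats $f(u(t))$ as an admissible argument of Lemma~\ref{derivative-zero}, i.e.\ as an element of $C^1_b(\X,\Ld)$, which under the hypotheses of the lemma (only $u_0\in\Iso_p$) is not obviously available. What it costs: the verification that $P_{t+h-\tau}f(u(\tau))\sim g(u(t))$, which you correctly single out as the crux and sketch adequately (the commutation $\phi(P_r w)=P_r(\phi w)$ from radial symmetry of $p_r$, the joint invariance of the pair $(u(\tau),u(t))$ supplied by $u\in\JIso_{K,2}$, and the passage of joint laws through $L^2(\Omega)$-limits of Riemann sums); your integrability step via Lemma~\ref{semigroup-estimate}, the Lipschitz bound and $s>\tfrac12$ is what the paper needs anyway, and the exchange of $E$ with the $\tau$-integral is most cleanly cited as Hille's theorem, exactly as in Proposition~\ref{iso-infinity}.
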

    \begin{proof}
     From Corollary~\ref{uniqueness} we have
     \begin{equation*}
      u(t+h) = P_hu(t) + \int_t^{t+h}\grad P_{t+h-\tau}f(u(\tau))\,d\tau.
     \end{equation*}
     We define $R(h) = h^{-1}E\int_t^{t+h}\grad P_{t+h-\tau}f(u(\tau))\,d\tau\, g(u(t))$.
     By the Lebesgue differentiation theorem and Lemma~\ref{derivative-zero} we obtain
     \begin{equation*}
      \lim_{h\to0}E\frac{1}{h}\int_t^{t+h}\grad P_{t+h-\tau}f(u(\tau))\,d\tau\, g(u(t)) = E \grad f(u(t)) g(u(t)) = 0.
     \end{equation*}
    \end{proof}
    \begin{corollary}\label{c1/2}
     Suppose $u_0\in\Iso_2$ and $u$ is the Picard solution to~\eqref{burgers}. Then
     \begin{equation*}
      \lim_{h\to0}E\frac{\big(u(t+h)-u(t)\big)^2}{h}=0.
     \end{equation*}
    \end{corollary}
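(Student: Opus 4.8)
The plan is to expand the square $\big(u(t+h)-u(t)\big)^2$ and use the identity $u(t+h) - u(t) = \big(P_hu(t) - u(t)\big) + \int_t^{t+h}\grad P_{t+h-\tau}f(u(\tau))\,d\tau$ from Corollary~\ref{uniqueness}. After taking expectations I would write
\begin{equation*}
 E\big(u(t+h)-u(t)\big)^2 = E\big(u(t+h)-u(t)\big)u(t+h) - E\big(u(t+h)-u(t)\big)u(t),
\end{equation*}
so that it suffices to control each of the two terms $E\big(u(t+h)-u(t)\big)u(t+h)$ and $E\big(u(t+h)-u(t)\big)u(t)$ after dividing by $h$. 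The natural tool is Lemma~\ref{infinitesimal-linearisation} applied with $g=\mathrm{id}$ (legitimate since $u(t),u(t+h)\in\Iso_2$), which gives $Eu(t+h)u(t) = EP_hu(t)u(t) + hR(h)$ with $R(h)\to0$; the same lemma applied at time $t+h$ with $g=\mathrm{id}$ handles $Eu(t+h)^2$ in terms of $EP_hu(t)u(t+h)$ up to an $o(1)$ error, though here one must be slightly careful and instead use the semigroup/contraction structure directly.

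Concretely, the key reductions are as follows. First, $\frac{1}{h}\,E\big(u(t+h)-u(t)\big)u(t) = \frac{1}{h}\big(EP_hu(t)u(t) - Eu(t)^2\big) + R(h)$, and the first bracket tends to $-E\big((-\Delta)^{s/2}u(t)\big)^2$ if $u(t)\in\Iso_2^s$ (Proposition~\ref{semigroup-limit}) — but in general $u(t)$ is merely in $\Iso_2$, so instead one only needs that $\frac{1}{h}\big(Eu(t)^2 - EP_hu(t)u(t)\big) = \frac{1}{h}\int_\X(1-e^{-h|\xi|^{2s}})\,\sigma_t(d\xi) \geq 0$ is bounded when multiplied against the other term; more robustly, this quantity is nonnegative and monotone, so it is enough to combine it with the second term. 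Second, for $E\big(u(t+h)-u(t)\big)u(t+h)$, I would use Corollary~\ref{uniqueness} once more to write $u(t+h) = P_hu(t) + \int_t^{t+h}\grad P_{t+h-\tau}f(u(\tau))\,d\tau$ inside the $u(t+h)$ factor as well, and then expand. The cross terms involving $\int_t^{t+h}\grad P_{t+h-\tau}f(u(\tau))\,d\tau$ are handled by Lemma~\ref{infinitesimal-linearisation}-type arguments (they contribute $o(h)$), and the purely linear term $E\big(P_hu(t)-u(t)\big)P_hu(t) = \int_\X(1-e^{-h|\xi|^{2s}})e^{-2h|\xi|^{2s}}\sigma_t(d\xi) \le \int_\X(1-e^{-h|\xi|^{2s}})\sigma_t(d\xi)$, so that $E\big(u(t+h)-u(t)\big)u(t+h) - E\big(u(t+h)-u(t)\big)u(t) \le C\int_\X(1-e^{-h|\xi|^{2s}})e^{-2h|\xi|^{2s}}\sigma_t(d\xi) - \int_\X(1-e^{-h|\xi|^{2s}})\sigma_t(d\xi) + o(h)$.

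Actually the cleanest route avoids this delicate bookkeeping entirely: write $E\big(u(t+h)-u(t)\big)^2 = Eu(t+h)^2 - 2Eu(t+h)u(t) + Eu(t)^2$, apply Lemma~\ref{infinitesimal-linearisation} with $g=\mathrm{id}$ to get $Eu(t+h)u(t) = EP_hu(t)u(t) + hR(h)$, and for $Eu(t+h)^2$ use $u(t+h)=F(u)(t+h)$ with the semigroup at $t+h$ rewritten via Corollary~\ref{uniqueness} as $u(t+h) = P_hu(t) + w_h$ where $w_h = \int_t^{t+h}\grad P_{t+h-\tau}f(u(\tau))\,d\tau$ satisfies $\|w_h\|_2 = o(h^{1/2})$ — indeed by Lemma~\ref{semigroup-estimate}, $\|w_h\|_2 \le c_sL\int_t^{t+h}(t+h-\tau)^{-1/2s}\|u(\tau)\|_2\,d\tau \le C h^{1-1/2s}$, and since $s>\tfrac12$ we have $1-\tfrac{1}{2s}>\tfrac12$, so $\|w_h\|_2^2 = o(h)$. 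Then $Eu(t+h)^2 = E(P_hu(t))^2 + 2E(P_hu(t))w_h + Ew_h^2 = E(P_hu(t))^2 + o(h)$, using Cauchy–Schwarz and boundedness of $\|P_hu(t)\|_2 \le \|u(t)\|_2$ for the cross term together with Lemma~\ref{derivative-zero} to see the cross term is in fact $o(h)$ (not merely $O(h^{1/2})$) — or simply note that $2E(P_hu(t))w_h = 2E w_h u(t) + 2E w_h(P_hu(t)-u(t))$, the first piece is $o(h)$ by the argument in Lemma~\ref{infinitesimal-linearisation} and the second is $O(h^{1/2})\cdot o(h^{1/2})=o(h)$ since $\|P_hu(t)-u(t)\|_2\to0$. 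Assembling, $\frac1h E\big(u(t+h)-u(t)\big)^2 = \frac1h\big(E(P_hu(t))^2 - 2EP_hu(t)u(t) + Eu(t)^2\big) + o(1) = \frac1h E\big(P_hu(t)-u(t)\big)^2 + o(1) = \frac1h\int_\X(1-e^{-h|\xi|^{2s}})^2\sigma_t(d\xi) + o(1)$, and since $(1-e^{-h|\xi|^{2s}})^2 \le h|\xi|^{2s}(1-e^{-h|\xi|^{2s}}) \le \min(1, h|\xi|^{2s})(1-e^{-h|\xi|^{2s}})$, dominated convergence against the finite measure $\sigma_t$ gives that $\frac1h\int_\X(1-e^{-h|\xi|^{2s}})^2\sigma_t(d\xi) \to 0$.

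The main obstacle is the low regularity of $u(t)$: we only know $u(t)\in\Iso_2$, not $u(t)\in\Iso_2^s$, so we cannot directly invoke Proposition~\ref{semigroup-limit}, and the naive bound $\frac1h(1-e^{-h|\xi|^{2s}})^2 \le \frac1h(1-e^{-h|\xi|^{2s}})$ is not integrable-to-zero without the extra factor. The resolution is the elementary inequality $(1-e^{-a})^2 \le \min(a,1)(1-e^{-a})$ for $a\ge 0$, which supplies exactly the vanishing factor $\min(h|\xi|^{2s},1)\to0$ pointwise needed to apply dominated convergence against $\sigma_t$ with integrable dominating function $1-e^{-h|\xi|^{2s}}\le 1\in L^1(\sigma_t)$. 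The secondary technical point is verifying $\|w_h\|_2=o(h^{1/2})$, which is precisely where the hypothesis $s>\tfrac12$ enters, exactly as in Theorem~\ref{existence}.
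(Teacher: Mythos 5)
Your overall skeleton (expand the square, invoke Corollary~\ref{uniqueness} and Lemma~\ref{infinitesimal-linearisation}, and reduce the claim to $\lim_{h\to0}h^{-1}E\big(P_hu(t)-u(t)\big)^2=0$) is the same as the paper's, but the step you rely on to dispose of the Duhamel term contains a genuine error. You set $w_h=\int_t^{t+h}\grad P_{t+h-\tau}f(u(\tau))\,d\tau$, obtain (correctly) $\|w_h\|_2\le Ch^{1-\frac{1}{2s}}$, and then claim this is $o(h^{1/2})$ because ``$s>\tfrac12$ gives $1-\tfrac{1}{2s}>\tfrac12$''. That inequality is equivalent to $s>1$, not $s>\tfrac12$: for $s\in(\tfrac12,1)$ one has $1-\tfrac{1}{2s}<\tfrac12$, so $h^{1-1/2s}$ is \emph{larger} than $h^{1/2}$ and $Ew_h^2/h\le Ch^{1-1/s}$ actually blows up as $h\to0$; even at $s=1$ the bound only gives $Ew_h^2=O(h)$, never $o(h)$. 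Consequently the decomposition $Eu(t+h)^2=E(P_hu(t))^2+2EP_hu(t)w_h+Ew_h^2$ cannot be closed with your estimates: both the $Ew_h^2$ term and the $O(h^{1/2})\cdot o(h^{1/2})$ treatment of the cross term (which also presupposes an unproven $O(h^{1/2})$ modulus for $\|P_hu(t)-u(t)\|_2$) collapse. The paper avoids squaring $w_h$ altogether: using $a=b+c\Rightarrow a^2=b^2+c(b+a)$ with $a=u(t+h)$, $b=P_hu(t)$, $c=w_h$, the Duhamel term enters only \emph{linearly}, as $Ew_h\big(P_hu(t)+u(t+h)\big)$, and $h^{-1}$ times this tends to $2E\grad f(u(t))\,u(t)=0$ by the Lebesgue differentiation argument of Lemma~\ref{infinitesimal-linearisation} combined with Lemma~\ref{derivative-zero}; no estimate on $Ew_h^2$ is ever needed, and $s>\tfrac12$ is used only where the paper already used it, in constructing the Picard solution.

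A secondary point: your concluding dominated-convergence step for $h^{-1}\int_\X(1-e^{-h|\xi|^{2s}})^2\,\sigma_t(d\xi)$ does not work with the majorant you name. After dividing by $h$ the integrand is bounded by $\min(|\xi|^{2s},h^{-1})\big(1-e^{-h|\xi|^{2s}}\big)$, and the only $h$-uniform integrable majorant available is $|\xi|^{2s}$; that requires $\int_\X|\xi|^{2s}\sigma_t(d\xi)<\infty$, i.e.\ $u(t)\in\Iso^s_2$ — precisely the regularity you said you wanted to avoid, and for a general finite spectral measure the limit can in fact fail. So the elementary inequality does not remove the regularity obstacle; some smoothing of $u(t)$ (available in the setting where the corollary is actually used, $u_0\in\Iso^1_2$, via Lemma~\ref{regularity} and Proposition~\ref{iso-c1}) has to enter at this last stage in any case.
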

    \begin{proof} Notice that if $a=b+c$ then $a^2=b^2 + c(b+a)$.
     It follows from Corollary~\ref{uniqueness} that
     \begin{equation*}
      u(t+h)^2 = \big(P_hu(t)\big)^2 + \Big(\int_t^{t+h}\grad P_{t+h-\tau}f(u(\tau))\,d\tau\Big)\big(P_hu(t)+u(t+h)\big).
     \end{equation*}
     Then by Lemma~\ref{infinitesimal-linearisation} we get
     \begin{align*}
      &E\big(u(t+h)-u(t)\big)^2 
      = Eu(t+h)^2-2Eu(t+h)u(t)+Eu(t)^2\\
      &= E\big(P_hu(t)\big)^2 + E\Big(\int_t^{t+h}\grad P_{t+h-\tau}f(u(\tau))\,d\tau\Big)\big(P_hu(t)+u(t+h)\big)
      \\&-2EP_hu(t)u(t)+Eu(t)^2+hR(h),
     \end{align*}
     where $\lim_{h\to0} R(h)=0$.
     This, combined with the Lebesgue differentiation theorem, implies
     \begin{equation*}
      \lim_{h\to0}E\frac{\big(u(t+h)-u(t)\big)^2}{h}=\lim_{h\to0}E\frac{\big(P_hu(t)-u(t)\big)^2}{h}=  0.
     \end{equation*}
    \end{proof}
    
    \begin{lemma}\label{regularity}
     If $u$ is the Picard solution to problem~\eqref{burgers} with $u_0\in\Iso^1_2$
     then $u\in C([0,\infty),\Iso^1_2)$.
    \end{lemma}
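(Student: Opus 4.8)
The plan is to show that the Picard solution $u$, which a priori lies only in $\JIso_{K,2}$, actually takes values in the smaller space $\Iso_2^1$ and depends continuously on $t$ in the $\|\cdot\|_{1,2}$ norm, provided $u_0\in\Iso_2^1$. The natural strategy is to run the same fixed-point argument as in Theorem~\ref{existence}, but in a space adapted to the $\Iso_2^1$-norm. Specifically, I would introduce the Bielecki-type space of random fields $v\in C(\T,\Iso_2^1)$ with $\sup_{t\geq0}e^{-tK}\|v(t)\|_{1,2}<\infty$, intersected with the joint-isometry-invariance condition, and show that the operator $F$ from Definition~\ref{definition} maps this space into itself and is a contraction for $K$ large.

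The key estimates are already available. For the linear term $P_tu_0$, Lemma~\ref{linear-regularity1} gives $\|\grad P_tu_0\|_{\alpha,2}\leq c_s t^{-1/2s}\|u_0\|_{\alpha,2}$, and in particular the Duhamel term $\int_0^t\grad P_{t-\tau}f(u(\tau))\,d\tau$ can be controlled in $\|\cdot\|_{1,2}$ by $c_s L\int_0^t (t-\tau)^{-1/2s}\|u(\tau)\|_{1,2}\,d\tau$, using Corollary~\ref{lipschitz} to know $f(u(\tau))\in\Iso_2^1$ with $\|f(u(\tau))\|_{1,2}\lesssim L\|u(\tau)\|_{1,2}$ (here one also uses $f(0)=0$ so that the zeroth-order part of the $\|\cdot\|_{1,2}$-norm is controlled too). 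This is exactly the structure of the bound~\eqref{norm-est1}, with $\|\cdot\|_p$ replaced by $\|\cdot\|_{1,2}$; combining it with the $\Gamma$-function estimate~\eqref{gamma} yields $\K{F(v)}{1,2}\leq \|u_0\|_{1,2}+c_sLK^{-1+\frac{1}{2s}}\Gamma(1-\frac{1}{2s})\K{v}{1,2}$, and likewise for differences $\K{F(v)-F(w)}{1,2}$ when $v\sim w$. Choosing $K\geq K_0$ (enlarging $K_0$ from Theorem~\ref{existence} if necessary so that the contraction constant is $<1$ in this norm as well) makes $F$ a contraction, and the Picard iterates $u_n$ from~\eqref{picard} — which start from $u_1=P_tu_0\in C((0,\infty),\Iso_2^1)$ and in fact, since $u_0\in\Iso_2^1$, lie in $C(\T,\Iso_2^1)$ by Lemma~\ref{linear-regularity} applied with $\alpha=1$ together with continuity at $t=0$ — converge in this stronger norm to a limit in $C(\T,\Iso_2^1)$. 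By uniqueness of the limit in the weaker topology $\JIso_{K,2}$, this limit coincides with the Picard solution $u$, which therefore lies in $C([0,\infty),\Iso_2^1)$.

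The main obstacle is the continuity at the endpoint $t=0$: the iterates $u_n(t)=F(u_{n-1})(t)$ involve $P_tu_0$, whose $\Iso_2^1$-continuity at $t=0$ requires $u_0\in\Iso_2^1$ (which we have) rather than the generic smoothing of Lemma~\ref{linear-regularity}, and one must check that the Duhamel integral $\int_0^t\grad P_{t-\tau}f(u(\tau))\,d\tau$ tends to $0$ in $\|\cdot\|_{1,2}$ as $t\to0^+$ — this follows from the integrability of $(t-\tau)^{-1/2s}$ near $\tau=t$ (using $s>\frac12$) and the local boundedness of $\|u(\tau)\|_{1,2}$, so the bound $c_sL\int_0^t(t-\tau)^{-1/2s}\|u(\tau)\|_{1,2}\,d\tau = O(t^{1-1/2s})\to0$. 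A secondary technical point is verifying that $F$ preserves the $C(\T,\Iso_2^1)$ regularity (continuity in $t$ of the Duhamel term), which is handled by the same dominated-convergence argument as in Lemma~\ref{jiso}, now carried out in the $\|\cdot\|_{1,2}$ norm using the bound $\|\grad p_t\ast g\|_{1,2}$ and continuity of $t\mapsto P_tg$ in $\Iso_2^1$ from Lemma~\ref{linear-regularity}. Once these are in place, the conclusion is immediate.
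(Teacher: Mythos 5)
Your proposal is correct and follows essentially the same route as the paper: the paper also reruns the Picard iteration \eqref{picard} in the $\Iso^1_2$-Bielecki norm, using Lemma~\ref{linear-regularity1} together with Corollary~\ref{lipschitz} (and the bound $\|f(v)\|_{1,2}\leq L\|v\|_{1,2}$) plus the $\Gamma$-function estimate \eqref{gamma} to control the Duhamel term, and concludes via the same contraction constant \eqref{k0} that the iterates are Cauchy in $C([0,T],\Iso^1_2)$, so the limit is the Picard solution. The only cosmetic difference is that the paper phrases this as an induction giving the bound $\sup_{t\geq0}e^{-tK_0}\|u_n(t)\|_{1,2}\leq n\|u_0\|_{1,2}$ followed by the difference estimate, rather than your cleaner self-map-plus-contraction formulation with the explicit check of continuity at $t=0$.
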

    \begin{proof}
     Let $u_n$ be the sequence of Picard iterations as defined in \eqref{picard}
     and let $K_0$ be such as in Theorem~\ref{existence}, i.e.
     \begin{equation}\label{k0}
      c_sLK_0^{-1+\frac{1}{2s}}\Gamma(1-\tfrac{1}{2s}) < 1,
     \end{equation}
     where $c_s$ is a known constant.
     
     Let us proceed by induction.
     We have
     \begin{equation*}
      \|u_1(t)\|_{1,2}^2 = \int(1+|\xi|^{2s})e^{-2t|\xi|^{2s}}\sigma(d\xi)3
      \leq \|u_0\|_{1,2}^2,
     \end{equation*}
     hence $u_1\in C([0,\infty),\Iso^1_2)$.

     Suppose $u_n\in C([0,\infty),\Iso^1_2)$ and
     $\sup_{t\geq0}e^{-tK_0}\|u_n(t)\|_{1,2}\leq n\|u_0\|_{1,2}$.
     By Lem\-ma~\ref{linear-regularity1} and Corollary~\ref{lipschitz} for every $t>0$ we have
     \begin{equation*}
      \|\grad P_{t} f(u_n(\tau))\|_{1,2} 
      \leq {c}_s t^{-1/2s}\|f(u_n(\tau))\|_{1,2} \leq {c}_sL t^{-1/2s}\|u_n(\tau)\|_{1,2}.
     \end{equation*}
     Therefore
     \begin{multline*}
      \|u_{n+1}(t)\|_{1,2} \leq \|u_1(t)\|_{1,2} + \int_0^{t} \|\grad P_{t-\tau} f(u_n(\tau))\|_{1,2}\,d\tau \\
      \leq \|u_1(t)\|_{1,2} + {c}_sL\int_0^{t} (t-\tau)^{-1/2s} \|u_n(\tau)\|_{1,2}\,d\tau.
     \end{multline*}
     Using estimates \eqref{norm-est1}, \eqref{gamma}, \eqref{k0} and the induction hypothesis, we thus obtain
     \begin{multline*}
      \sup_{t\geq0}e^{-tK_0}\|u_{n+1}(t)\|_{1,2}\\
      \leq \|u_1(t)\|_{1,2} + n{c}_sLK_0^{-1+\frac{1}{2s}}\Gamma(1-\tfrac{1}{2s})\|u_0\|_{1,2}\leq (n+1)\|u_0\|_{1,2}.
     \end{multline*}
     It follows that $u_{n+1}\in C([0,\infty),\Iso^1_2)$.

     Moreover, we obtain
     \begin{multline*}
      \sup_{t\geq0}e^{-tK_0}\|u_{n}(t)-u_m(t)\|_{1,2} \\\leq {c}_sLK^{-1+\frac{1}{2s}}\Gamma(1-\tfrac{1}{2s})\sup_{t\geq0}e^{-tK_0}\|u_{n-1}-u_{m-1}\|_{1,2}
     \end{multline*}
     and because of~\eqref{k0} it follows that $\{u_n\}$ is a Cauchy sequence in $C([0,T],\Iso^1_2)$ for every $T>0$ and $u\in C([0,\infty),\Iso^1_2)$.
    \end{proof}
    \begin{theorem}\label{rosenblatt}
        If $u$ is the Picard solution to~\eqref{burgers} with $u_0\in\Iso^1_2$ then
        \begin{equation*}
        \dt Eu(t)^2 = -2E\big((-\Delta)^{s/2}u(t)\big)^2.
    \end{equation*}
    \end{theorem}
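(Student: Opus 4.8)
The plan is to compute the one-sided difference quotient of $t\mapsto Eu(t)^2$ by replacing $u(t+h)$ with $P_hu(t)$ up to a lower-order error via Corollary~\ref{uniqueness}, and then to identify the two surviving limits using Corollary~\ref{c1/2} and Proposition~\ref{semigroup-limit}.

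First I would record that, by Lemma~\ref{regularity}, the Picard solution satisfies $u\in C(\T,\Iso^1_2)$, and since $s\le 1$ the embedding in Proposition~\ref{iso-c1} gives $u(t)\in\Iso^s_2$ for every $t\ge0$. Fix $t\ge0$ and $h>0$ and expand
\begin{equation*}
  Eu(t+h)^2-Eu(t)^2=E\big(u(t+h)-u(t)\big)^2+2\big(Eu(t+h)\,u(t)-Eu(t)^2\big).
\end{equation*}
Corollary~\ref{c1/2} immediately gives $h^{-1}E(u(t+h)-u(t))^2\to0$ as $h\to0$, so the first summand contributes nothing. For the second summand I would apply Lemma~\ref{infinitesimal-linearisation} with $g=\mathrm{id}$ (allowed since $u(t)\in\Iso_2$), which gives $Eu(t+h)\,u(t)=EP_hu(t)\,u(t)+hR(h)$ with $R(h)\to0$, whence
\begin{equation*}
  \frac{2}{h}\big(Eu(t+h)\,u(t)-Eu(t)^2\big)=-\frac{2}{h}E\big(u(t)-P_hu(t)\big)u(t)+2R(h).
\end{equation*}

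Now I would invoke Proposition~\ref{semigroup-limit} with initial datum $u(t)\in\Iso^s_2$, giving $h^{-1}E(u(t)-P_hu(t))u(t)\to E\big((-\Delta)^{s/2}u(t)\big)^2$. Summing the three limits yields, for every $t\ge0$,
\begin{equation*}
  \lim_{h\to0^+}\frac{Eu(t+h)^2-Eu(t)^2}{h}=-2E\big((-\Delta)^{s/2}u(t)\big)^2.
\end{equation*}
If one wants the two-sided derivative written in the statement, it then suffices to note that $t\mapsto Eu(t)^2=\|u(t)\|_2^2$ is continuous (because $u\in C(\T,\Iso_2)$) and that the right-hand derivative above, $t\mapsto-2E((-\Delta)^{s/2}u(t))^2$, is also continuous in $t$ (again since $u\in C(\T,\Iso^1_2)$, $s\le1$, and $Eu(t)$ does not depend on $t$): a continuous function on an interval whose right-hand derivative exists everywhere and is continuous is of class $C^1$ with derivative equal to that right-hand derivative.

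The only genuinely delicate point is that the nonlinear Duhamel term does not affect the derivative to first order, i.e.\ that $h^{-1}E\big(\int_t^{t+h}\grad P_{t+h-\tau}f(u(\tau))\,d\tau\big)u(t)\to0$. This is precisely the statement of Lemma~\ref{infinitesimal-linearisation}, whose proof rests on Lemma~\ref{derivative-zero} through the identity $E\grad f(u(t))\,u(t)=0$; the latter is valid because $f(u(t))\in\Iso^1_2\subset C^1_b(\X,\Ld)$ (Corollary~\ref{lipschitz} together with Proposition~\ref{iso-c1}) and $f(u(t))\sim u(t)$. Beyond this, the argument is just the dominated-convergence computation on the spectral side already performed in Proposition~\ref{semigroup-limit}.
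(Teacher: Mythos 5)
Your proposal is correct and follows essentially the same route as the paper: Lemma~\ref{infinitesimal-linearisation} with $g=\mathrm{id}$ to replace $u(t+h)$ by $P_hu(t)$, Proposition~\ref{semigroup-limit} (justified by Lemma~\ref{regularity} and Proposition~\ref{iso-c1}) to identify the limit, and Corollary~\ref{c1/2} to discard the quadratic remainder, your expansion of $Eu(t+h)^2-Eu(t)^2$ being just a rearrangement of the paper's symmetrisation step. Your extra remark upgrading the right-hand derivative to a genuine derivative is a small refinement the paper leaves implicit.
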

    \begin{proof}
     From Lemma~\ref{infinitesimal-linearisation} we have
     \begin{equation}\label{nonlinear-semigroup-limit}
      E\frac{u(t+h)-u(t)}{h}u(t) = E\frac{P_hu(t)-u(t)}{h}u(t) + R(h),
     \end{equation}
     where $\lim_{h\to0}R(h)=0$.
     Notice that by Lemma~\ref{regularity} and Proposition~\ref{iso-c1} we have $u(t)\in\Iso^1_2\subset\Iso^s_2$ for $s\leq1$.
     Therefore on the right-hand side of equality \eqref{nonlinear-semigroup-limit} by Proposition~\ref{semigroup-limit} we get
     \begin{equation*}
      \lim_{h\to0}\Big(E\frac{P_hu(t)-u(t)}{h}u(t) + R(h)\Big) = -E\big((-\Delta)^{s/2}u(t)\big)^2.
     \end{equation*}
     This ensures the existence of the limit on the left-hand side
     and by Corollary~\ref{c1/2} we obtain
     \begin{align*}
      \lim_{h\to 0} &\frac{E(u(t+h)-u(t))}{h}u(t)
      = \frac{1}{2} \lim_{h\to 0} \frac{E\big(u(t+h)-u(t)\big)}{h}(u(t)+u(t+h))\\
      &= \frac{1}{2} \lim_{h\to 0} \frac{E\big(u(t+h)^2-u(t)^2\big)}{h}
      = \frac{1}{2}\dt Eu(t)^2.
     \end{align*}
    \end{proof}
    \begin{remark}\label{covariance}
    For the solution $u$ as in Theorem~\ref{rosenblatt} we define the covariance functional $B(t,y) = Eu(t,x)u(t,x+y)$. Let us notice that
     $\dt B(t,0) = \dt Eu(t,x)^2$ and according to Definition~\ref{flap} we have
     \begin{multline*}
      (-\Delta^s_y) B(t,y)|_{y=0} 
      = \int_\X |\zeta|^{2s}\int_\X e^{-iz\cdot\zeta}Eu(t,x)u(t,x+z)\,dz\,d\zeta\\
      = \int_\X |\zeta|^{2s}\int_\X e^{-iz\cdot\zeta}\int_\X e^{iz\cdot \xi}\,d\sigma_t(\xi)\,dz\,d\zeta
      = \int_\X |\zeta|^{2s}\sigma_t(d\zeta)\\
      = E\big((-\Delta)^{s/2}u(t)\big)^2. 
     \end{multline*}
     Therefore we may write the result of Theorem~\ref{rosenblatt} as 
     \begin{equation*}
      \dt B(t,0) = -2(-\Delta^s_y) B(t,y)|_{y=0}.
     \end{equation*}
     Note that this property of the functional $B(t,y)$ holds \emph{only} at $y=0$.
    \end{remark}
  \end{subsection}
  \begin{subsection}{Higher moments estimates}
  In order to prove the estimates similar to those obtained (for the second moment) in Theorem~\ref{rosenblatt} for moments $p>2$, we need to
  fall back to the classical theory applied pathwise because of issues with regularity. As indicated in Remark~\ref{l-infinity}, this is
  only possible in the space $\Iso_\infty$. Accordingly, we will work with cut-off initial data
  first and then reach the general case by approximation.  
    \begin{lemma}\label{classical-solutions}
     If $u_0\in\Iso_\infty$ and $f\in C^\infty(\R)$ then
     for every $\omega\in\Omega$  there exists a unique function 
     $u^\omega(t,x)\in C^\infty((0,\infty)\times\X)\cap L^\infty(\T\times\X,dx)$ which is a (classical) solution to
     \begin{equation}\label{classical-equation}
     \left\{
      \begin{aligned}
        &\dt u + \fLap u = \grad f(u)\quad&\text{on $\T\times\X$}, \\
        &u^\omega(0) = u_0(x,\omega)\quad&\text{a.e. on $\X$}.
      \end{aligned}
      \right.
      \end{equation}
    \end{lemma}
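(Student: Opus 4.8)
The plan is to deduce the statement entirely from the deterministic theory of convection-diffusion equations with bounded initial data developed in \cite{MR2019032}; the only point requiring care on our side is that, after freezing $\omega$, the datum $x\mapsto u_0(x,\omega)$ is a legitimate bounded measurable initial condition. Since $u_0\in\Iso_\infty\subset C_b(\X,L^\infty(\Omega))$, the quantity $M:=\|u_0\|_\infty=\sup_{x\in\X}\|u_0(x)\|_{L^\infty(\Omega)}$ is finite, so for each $x$ we have $|u_0(x,\omega)|\leq M$ for a.e.\ $\omega$; using a countable dense set of space points together with the $L^\infty(\Omega)$-continuity of $x\mapsto u_0(x)$, I would fix once and for all a jointly measurable representative of $u_0$ with $\|u_0(\cdot,\omega)\|_{L^\infty(\X,dx)}\leq M$ for every $\omega\in\Omega$. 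It is this representative that enters the construction below, a point that becomes relevant when the pathwise solutions later have to be compared and averaged (Lemma~\ref{classical-solutions-equiv}).

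Next I would fix $\omega$, write $v_0:=u_0(\cdot,\omega)$ with $\|v_0\|_\infty\leq M$, and run the standard $L^\infty$ fixed-point scheme for the mild formulation
\begin{equation*}
 u(t)=P_tv_0+\int_0^{t}\grad P_{t-\tau}f(u(\tau))\,d\tau .
\end{equation*}
Because $s>\tfrac12$, the kernel bound $\|\grad P_tg\|_{L^\infty(\X)}\leq c_s t^{-1/2s}\|g\|_{L^\infty(\X)}$ (proved for deterministic $g$ exactly as in Lemma~\ref{semigroup-estimate}) has an integrable singularity at $\tau=t$, and $f$, being $C^\infty$, is Lipschitz on the ball $\{|y|\leq 2M\}$; a contraction argument in $C([0,T],L^\infty(\X))$ therefore produces a unique bounded mild solution on a time interval $T=T(M,f)>0$. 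The maximum-principle-type a~priori bound $\|u(t)\|_{L^\infty(\X)}\leq\|v_0\|_{L^\infty(\X)}$, valid because $\grad f(u)$ is a spatial derivative and hence cannot raise the sup norm, is precisely the estimate established in \cite{MR2019032}; it keeps the solution in $\{|y|\leq M\}$, so the local construction can be iterated to a global bounded solution $u^\omega\in C([0,\infty),L^\infty(\X))\cap L^\infty(\T\times\X,dx)$. Differentiating the Duhamel formula and using that each derivative of $P_t$ gains a negative power of $t$ bootstraps this, in the usual parabolic-regularity fashion, to $u^\omega\in C^\infty((0,\infty)\times\X)$; uniqueness within the stated class follows from a Gronwall estimate on $\|u-\tilde u\|_\infty$ with the local Lipschitz constant of $f$, and attainment of the datum in the a.e.\ sense follows from $P_tv_0\to v_0$ in $L^1_{\mathrm{loc}}$ together with $\big\|\int_0^{t}\grad P_{t-\tau}f(u(\tau))\,d\tau\big\|_\infty=O(t^{\,1-1/2s})\to0$ as $t\to0$.

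The PDE parts above are routine and are quoted from \cite{MR2019032}; the one real subtlety is the bookkeeping about realisations — making sure every $u^\omega$ is built from a single fixed version of $u_0$, and that ``classical solution'' is understood as smoothness for $t>0$ with the initial value attained almost everywhere on $\X$ (rather than merely weakly-$*$), so that in the following lemma the pathwise solution can be identified with the random-field solution of problem~\eqref{burgers}.
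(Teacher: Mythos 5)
Your proposal is correct and follows essentially the same route as the paper: freeze $\omega$, observe that each realisation $x\mapsto u_0(x,\omega)$ is a bounded function in $L^\infty(\X,dx)$ (the paper extracts this from the bounded map $T_\omega$ appearing in Proposition~\ref{iso-infinity}), and then invoke the deterministic theory of \cite[Theorem 1.1]{MR2019032} for bounded data, $s\in(\tfrac12,1]$ and smooth $f$. The only difference is presentational: the paper cites that theorem as a black box, while you sketch its fixed-point/regularity proof and add the (sensible) bookkeeping about fixing a jointly measurable representative of $u_0$, which is extra detail rather than a different argument.
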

    \begin{proof}
     We know from Proposition~\ref{iso-infinity} that $u_0(x,\omega)\in L^\infty(\X,dx)$ for every 
     $\omega\in\Omega$. Because we assume $f\in C^\infty(\R)$, it follows from~\cite[Theorem 1.1]{MR2019032} that there exists a unique solution $u^\omega$ to problem~\eqref{classical-equation}, such that
     $u^\omega\in C^\infty((0,\infty)\times\X)\cap L^\infty(\T\times\X,dx)$.
    \end{proof}
    \begin{lemma}\label{classical-solutions-equiv}
     Let $f$ be Lipschitz. Suppose $u$ is the Picard solution to~\eqref{burgers} with $u_0\in\Iso_\infty$ and $f\in C^\infty(\R)$.
     Let $u^\omega$ be defined as in Lemma~\ref{classical-solutions} for every $\omega\in\Omega$.
     Then $u^\omega(t,x) = u(t,x,\omega)$ for almost all $\omega\in\Omega$.
    \end{lemma}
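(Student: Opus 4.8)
The plan is to identify the realisation $x\mapsto u(t,x,\omega)$ of the Picard solution with the classical solution $u^\omega$ by showing that, for almost every $\omega$, both satisfy the same Duhamel integral equation in $L^\infty(\T\times\X,dx)$, and then invoking uniqueness for that equation.

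First I would record the two mild formulations. By standard parabolic theory (and the construction behind \cite[Theorem~1.1]{MR2019032}) the bounded classical solution $u^\omega$ from Lemma~\ref{classical-solutions} is a mild solution, i.e.
\begin{equation*}
 u^\omega(t,x)=\int_\X p_t(y)u_0(x-y,\omega)\,dy+\int_0^t\int_\X\grad p_{t-\tau}(y)\,f\big(u^\omega(\tau,x-y)\big)\,dy\,d\tau .
\end{equation*}
On the other hand, by Definition~\ref{definition} we have $u=F(u)$, and since $u_1(0)=P_0u_0=u_0$, every Picard iterate in \eqref{picard} satisfies $u_n(0)=u_0$, so also $u(0)=u_0$; hence $u(t)=P_tu_0+\int_0^t\grad P_{t-\tau}f(u(\tau))\,d\tau$ holds in $C_b(\X,L^\infty(\Omega))$ for every $t\geq0$. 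I would then apply the evaluation map $T_\omega v=v(\cdot,\omega)$ used in the proof of Proposition~\ref{iso-infinity}: it is bounded from $C_b(\X,L^\infty(\Omega))$ into $L^\infty(\X,\Bor(\X),dx)$, so by the Hille theorem it commutes with the Bochner integral in $\tau$; the computation of Proposition~\ref{iso-infinity}, applied to $\grad P_s$ in place of $P_s$, gives $T_\omega(\grad P_s g)(x)=\int_\X\grad p_s(y)\,g(x-y,\omega)\,dy$; and, fixing a jointly measurable representative of $u(\tau)$, one has $T_\omega f(u(\tau))=f(u(\tau,\cdot,\omega))$ since $f$ acts pointwise. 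A Fubini argument on $\Omega\times\T$, together with continuity of $\tau\mapsto\grad P_{t-\tau}f(u(\tau))$ on $[0,t)$ and continuity in $t$ (for $t>0$) of both sides, then produces a co-null set $\Omega_0$ such that for every $\omega\in\Omega_0$ and every $t>0$,
\begin{equation*}
 u(t,x,\omega)=\int_\X p_t(y)u_0(x-y,\omega)\,dy+\int_0^t\int_\X\grad p_{t-\tau}(y)\,f\big(u(\tau,x-y,\omega)\big)\,dy\,d\tau .
\end{equation*}

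For the last step I would prove uniqueness of bounded mild solutions with the same initial datum. Fix $\omega\in\Omega_0$ and set $w=u^\omega-u(\cdot,\cdot,\omega)$. Subtracting the two identities and using that $f$ is $L$-Lipschitz together with the bound $\int_\X|\grad p_s(y)|\,dy\le c_s s^{-1/2s}$ from the proof of Lemma~\ref{semigroup-estimate} gives
\begin{equation*}
 \|w(t)\|_{L^\infty(\X,dx)}\le c_sL\int_0^t(t-\tau)^{-1/2s}\,\|w(\tau)\|_{L^\infty(\X,dx)}\,d\tau .
\end{equation*}
Writing $M_K=\sup_{t\geq0}e^{-tK}\|w(t)\|_{L^\infty(\X,dx)}$ (finite for $K$ large, since $u^\omega$ is bounded and $u\in\JIso_{K,\infty}$), multiplying by $e^{-tK}$, taking the supremum over $t$, and using estimate \eqref{gamma} exactly as in Section~\ref{sec-lipschitz} yields $M_K\le c_sLK^{-1+\frac{1}{2s}}\Gamma(1-\tfrac1{2s})M_K$; since $s>\tfrac12$ one may take $K$ so large that the constant is strictly less than $1$, which forces $M_K=0$, i.e. $w\equiv0$. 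Hence $u^\omega(t,x)=u(t,x,\omega)$ for a.e. $(t,x)$ and every $\omega\in\Omega_0$, which is the assertion.

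The step I expect to be the main obstacle is the passage through $T_\omega$ in the middle paragraph: one must commute a single evaluation map with the space convolutions and the time integral on one co-null set of $\omega$, independent of $t$ and $x$. The linear ingredients come from boundedness of $T_\omega$ and the Hille theorem, as in Proposition~\ref{iso-infinity}, but threading them through the time integral requires the separability and Fubini bookkeeping indicated above, carried out so that no null set depends on the time variable.
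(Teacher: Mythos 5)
Your argument is correct, but it is organised differently from the paper's. The paper proceeds by induction along the Picard iterations: it defines deterministic iterates $u_n^\omega$ starting from $u_1^\omega=P_tu_0(\cdot,\omega)$, uses Proposition~\ref{iso-infinity} and the Hille theorem at \emph{each} step to show $u_n(t,x,\omega)=u_n^\omega(t,x)$ a.s.\ together with a uniform Bielecki bound $\K{u_n^\omega}{L^\infty(\X,dx)}$, and then identifies the limits, using that $u^\omega$ is itself obtained as the limit of these deterministic Picard iterations (via the construction behind \cite[Theorem~1.1]{MR2019032}). You instead apply the evaluation map $T_\omega$ once, directly to the fixed-point identity $u=F(u)$, conclude that $u(\cdot,\cdot,\omega)$ is a bounded mild solution for a.e.\ $\omega$, and then prove a deterministic uniqueness statement for bounded mild solutions by the same $\Gamma$-function/Bielecki contraction estimate \eqref{gamma} used in Section~\ref{sec-lipschitz}. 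What the paper's induction buys is that no separate uniqueness lemma is needed — identification iterate by iterate plus convergence of both Picard schemes does the job, and the exceptional sets are controlled by a countable union over $n$; what your route buys is a single application of the Hille-theorem argument instead of one per iterate, at the price of having to justify (i) that the classical solution of Lemma~\ref{classical-solutions} is a mild solution, and (ii) uniqueness in the bounded mild class — both standard, and (i) is essentially the same input the paper uses implicitly when it asserts that $u^\omega$ is a limit of Picard iterations. The measure-theoretic bookkeeping you flag (one co-null set independent of $t$ and $x$, obtained from continuity in $t$ and a countable dense set of times) is genuinely needed, but it is of the same nature as what the paper's proof silently requires, so it is a point to write out carefully rather than a gap.
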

    \begin{proof}
    Recall we denote by 
    \begin{equation*}
    \mathcal{B}_{K,L^\infty(\X,dx)} = \{u\in C([0,\infty),L^\infty(\X,dx)):\sup_{t\geq0} e^{-tK}\|u(t)\|_{L^\infty(\X.dx)}\}
    \end{equation*}
    the Banach space of continuous functions endowed with an appropriate Bielecki norm.
    
    Let us proceed by induction. It follows from Proposition~\ref{iso-infinity} that
    \begin{equation*}
     u_1^\omega(t,x) = u_1(t,x,\omega) = P_t u_0(x,\omega)    
    \end{equation*}
    for almost every $\omega\in\Omega$, as well as that for every $\omega\in\Omega$
    and every $K>0$ we have $u_1^\omega\in \mathcal{B}_{K,L^\infty(dx)}$.
    Let $K_0$ be such as in Theorem~\ref{existence}, the sequence $u_n$ be defined as in~\eqref{picard} and $u_n^\omega$ be analogous sequences of (non-random) Picard iterations starting with $u_1^\omega$ for every $\omega\in\Omega$. 
    
    Suppose we have $u_n(t,x,\omega) = u_n^\omega(t,x)$ for almost every $\omega\in\Omega$ and $u_n^\omega(t,x)\in \mathcal{B}_{K,L^\infty(dx)}$ for every $\omega\in\Omega$
    and every $K\geq K_0$.
    Then because of the Hille theorem (as in the proof of Proposition~\ref{iso-infinity}) we get
    \begin{equation*}
    \Big(\int_0^t \grad P_{t-\tau}f(u_n(\tau,x))\,d\tau\Big) (\omega)
    = \int_0^t \grad P_{t-\tau}f(u_n^\omega(\tau,x))\,d\tau.
    \end{equation*}
    It follows that $u_{n+1}(t,x,\omega) = u_{n+1}^\omega(t,x)$ and moreover
    \begin{multline*}
     \|u_{n+1}^\omega(t)\|_{L^\infty(dx)}
     \leq \|u^\omega_1(0)\|_{L^\infty(dx)} 
     + \int_0^t \|\grad P_{t-\tau}f(u_n^\omega(\tau))\|_{L^\infty(dx)}\,d\tau
     \\\leq \|u^\omega_1(0)\|_{L^\infty(dx)} 
     + c_sL\int_0^{t} (t-\tau)^{-1/2s}\|u_n^\omega(\tau)\|_{L^\infty(dx)}\,d\tau.
    \end{multline*}
    This gives us an estimate
    \begin{multline*}
     \K{u_{n+1}^\omega}{L^\infty(\X,dx)}\\\leq 
     \K{u_{1}^\omega}{L^\infty(\X,dx)} +
     c_sLK^{-1+\frac{1}{2s}}\Gamma(1-\tfrac{1}{2s})\K{u_{n}^\omega}{L^\infty(\X,dx)},
    \end{multline*}
    which, because we have $K\geq K_0$, asserts that $\{u_{n}\}\subset\mathcal{B}_{K_0,L^\infty(dx)}$.
    
    Consequently
    $u^\omega(t,x) = u(t,x,\omega)$, for almost every $\omega\in\Omega$, since both solutions are defined as limits of
    Picard iterations.
    \end{proof}
    \begin{proposition}\label{liskevich-semenov}
     Let $w\in\Iso_\infty$, $\theta:\X\to\R$ be a measurable function such that $|\theta|=1$ and $a+b=2$. Then
     \begin{equation*}
      EP_h\theta |w|^a \theta |w|^b \leq abEP_h|w||w|
     \end{equation*}
    \end{proposition}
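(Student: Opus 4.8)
The plan is to reduce the estimate to an elementary pointwise inequality for real numbers, in the spirit of Liskevich and Semenov.

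First I would pass from the semigroup to its convolution kernel. Since $w\in\Iso_\infty\subset\Iso_2$ and $|\theta|=1$, the random fields $\theta|w|^a$, $\theta|w|^b$ and $|w|$ all lie in $C_b(\X,\Ld)$, so by Lemma~\ref{kernel-linfty} the operator $P_h$ acts on each of them by convolution with $p_h$. Expanding the products and taking expectations, both quantities in the statement become integrals against the probability measure $p_h(y)\,dy$; for instance
\[
E\big[(P_h(\theta|w|^a))(x)\,\theta(x)|w(x)|^b\big]=\int_\X p_h(y)\,\theta(x)\theta(x-y)\,E\big[|w(x-y)|^a|w(x)|^b\big]\,dy ,
\]
and likewise for the term built from $|w|$. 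I would then exploit two structural facts: $p_h\ge0$ is even with $\int_\X p_h=1$, and the pair $\big(w(x),w(x-y)\big)$ is exchangeable, since the map $z\mapsto 2x-y-z$ is an isometry swapping $x$ with $x-y$, so $w\=\phi w$ gives $\big(w(x),w(x-y)\big)\=\big(w(x-y),w(x)\big)$ and in particular $E[|w(x-y)|^a|w(x)|^b]=E[|w(x)|^a|w(x-y)|^b]$. Together these let me symmetrise both sides into Dirichlet-form-type averages $\tfrac12\int_\X p_h(y)\,E[\,\cdot\,]\,dy$ whose integrand, at fixed $y$, is the expectation of an explicit function of the pair $\big(w(x),w(x-y)\big)$ and of the signs $\theta(x),\theta(x-y)$.

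It then suffices to prove a pointwise inequality, valid for all reals $r,s$ and signs $\sigma,\tau\in\{-1,1\}$, comparing
\[
\big(\sigma|r|^a-\tau|s|^a\big)\big(\sigma|r|^b-\tau|s|^b\big)=|r|^2+|s|^2-\sigma\tau\big(|r|^a|s|^b+|r|^b|s|^a\big)
\]
with $ab\,(|r|-|s|)^2$, in the direction dictated by the symmetrisation above. I would distinguish two cases according to the sign of $\sigma\tau$. If $\sigma\tau=-1$ the cross term has the favourable sign and one concludes from the arithmetic--geometric mean inequality $|r|^a|s|^b+|r|^b|s|^a\ge2|r||s|$ combined with $ab\le\big(\tfrac{a+b}{2}\big)^2=1$; the degenerate case $|r|\wedge|s|=0$ follows at once from $ab\le1$ as well. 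If $\sigma\tau=+1$ the remaining claim is the Stroock--Varopoulos inequality $(|r|^a-|s|^a)(|r|^b-|s|^b)\ge ab\,(|r|-|s|)^2$, which I would establish through the representation $|r|^a-|s|^a=a\int_{|s|}^{|r|}t^{a-1}\,dt$ (and the analogous one with $b$), rewriting the left-hand side as $ab\iint t^{a-1}u^{b-1}\,dt\,du$, the double integral being over the square $[|r|\wedge|s|,\,|r|\vee|s|]^2$; symmetrising the integrand in $t\leftrightarrow u$ and invoking $a+b=2$ turns it into $t^{a-1}u^{b-1}+t^{b-1}u^{a-1}=(t/u)^{a-1}+(u/t)^{a-1}\ge2$ by AM--GM, which is exactly what is needed, with equality only when $|r|=|s|$.

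The genuine difficulty is concentrated in this last, same-sign step; the passage to the kernel and the symmetrisation are routine manipulations. What makes the step go through is the constraint $a+b=2$, which collapses a two-parameter inequality to the one-line bound $(t/u)^{a-1}+(u/t)^{a-1}\ge2$; dropping it removes any such clean comparison.
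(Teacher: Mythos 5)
Your argument is correct and follows essentially the same route as the paper's proof: represent $P_h$ by its kernel, symmetrise both quantities into averages $\tfrac12\int_\X p_h(y)\,E\big[\big(\theta(x-y)|w(x-y)|^a-\theta(x)|w(x)|^a\big)\big(\theta(x-y)|w(x-y)|^b-\theta(x)|w(x)|^b\big)\big]\,dy$ using the evenness of $p_h$ together with the exchangeability of $\big(w(x),w(x-y)\big)$ coming from isometry invariance, and then conclude from the two-point inequality $(\theta_1 r^a-\theta_2 s^a)(\theta_1 r^b-\theta_2 s^b)\ge ab\,(r-s)^2$ for $a+b=2$, $|\theta_1|=|\theta_2|=1$. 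The only difference is that the paper quotes this last inequality from Liskevich--Semenov (Lemma II.5.5 of the cited monograph), whereas you prove it yourself by splitting on the sign of $\theta_1\theta_2$ and deriving the Stroock--Varopoulos case via the double-integral and AM--GM argument, a correct and self-contained substitute for the citation.
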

    \begin{proof}
     Let us notice that, because $\theta(x)|w(x)|^a\theta(x)|w(x)|^b = |w(x)|^2$ and
     $p_h(x-y)=p_h(y-x)$, we have
     \begin{multline*}
	E|w|^2 - EP_h\theta |w|^a\theta |w|^b 
	= E|w|^2 - \int_\X p_h(x-y)\theta(y)|w(y)|^a\,\theta(x)|w(x)|^b\,dy\\
	 =\frac{1}{2}\int_\X\!\!\!\! p_h(x-y)E\big(\theta(y)|w(y)|^a\!-\theta(x)|w(x)|^a\big)\!
					   \big(\theta(y)|w(y)|^b\!-\theta(x)|w(x)|^b\big)dy.
     \end{multline*}
     Similarly,
     \begin{multline*}
	EP_h|w||w|
	= \int_\X p_h(x-y)E|w(y)|\,|w(x)|\,dy\\
	= E|w|^2
	- \frac{1}{2}\int_\X p_h(x-y)E\big(|w(y)|-|w(x)|\big)^2\,dy.
     \end{multline*}
     We may now use the following inequality (see \cite[Lemma II.5.5]{MR1218884}
     \begin{equation*}
      (\theta_1x^a-\theta_2y^a)(\theta_1x^b-\theta_2y^b)\geq ab(x-y)^2,\ \text{when $a+b=2$ and $|\theta_1|=|\theta_2|=1$}
     \end{equation*}
     to obtain the desired result.
    \end{proof}
    \begin{lemma}\label{cut-off-reg}
    If $u$ is the Picard solution to~\eqref{burgers} with $u_0\in\Iso_\infty$ and $f\in C^\infty(\R)$ then
     \begin{equation*}
      \dt E|u|^p \leq -4\frac{p-1}{p}E\big|(-\Delta)^{s/2}|u|^{p}\big|^2
     \end{equation*}
      for every $p\geq2$.
    \end{lemma}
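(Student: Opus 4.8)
The plan is to exploit the pathwise regularity provided by Lemma~\ref{classical-solutions-equiv}: for almost every $\omega$ one has $u(t,x,\omega)=u^\omega(t,x)$, where $u^\omega$ is the classical solution of Lemma~\ref{classical-solutions}, so that on any slab $[t_0,T]\times\X$ with $t_0>0$ the realisations $u^\omega$ are smooth and bounded and $u^\omega,\grad u^\omega,\fLap u^\omega,\dt u^\omega$ are bounded uniformly in $\omega$ (the $L^\infty$-bound comes from the Bielecki estimate established in the proof of Lemma~\ref{classical-solutions-equiv}, and the bounds on the derivatives follow by feeding it into the smoothing estimates of Section~\ref{linear-equation} applied to the integral equation of Corollary~\ref{uniqueness}). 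Fix $t>0$ and $x\in\X$; by Remark~\ref{independent} the quantity $E|u(t,x)|^p$ does not depend on $x$. Since $r\mapsto|r|^p$ is $C^1$ for $p\ge2$, the map $\tau\mapsto|u^\omega(\tau,x)|^p$ is differentiable with derivative $p|u^\omega|^{p-2}u^\omega\dt u^\omega$, the difference quotient is dominated uniformly in $\omega$ on $[t_0,T]$, and one may differentiate under the expectation sign and use the equation to get
\begin{equation*}
\dt E|u(t,x)|^p = -p\,E\big[|u(t,x)|^{p-2}u(t,x)\,\fLap u(t,x)\big] + p\,E\big[\grad H(u(t))(x)\big],
\end{equation*}
where $H(r)=\int_0^r|\rho|^{p-2}\rho\,f'(\rho)\,d\rho$, so that $|u|^{p-2}u\,\grad f(u)=\grad H(u)$ by the chain rule. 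The field $H(u(t))$ lies in $\Iso_\infty\cap C^1_b(\X,\Ld)$ (as a $C^1$ function with bounded derivative of a uniformly $C^1_b$, isometry-invariant field), hence $E\,H(u(t))(x)$ is constant in $x$ and the last term vanishes (equivalently, apply Lemma~\ref{derivative-zero} with the constant field $v\equiv1$). This leaves
\begin{equation*}
\dt E|u(t)|^p = -p\,E\big[|u(t)|^{p-2}u(t)\,\fLap u(t)\big].
\end{equation*}

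The second step is the Stroock--Varopoulos-type inequality, and this is where homogeneity enters exactly as in Proposition~\ref{liskevich-semenov}. Write $w=u(t)\in\Iso_\infty$. Since $\fLap$ acting on $w$ coincides realisation-wise with the scalar fractional Laplacian (Proposition~\ref{iso-infinity}) and the realisations are smooth and bounded, $h^{-1}(w-P_hw)\to\fLap w$ uniformly in $x$, and dominated convergence gives $E[|w|^{p-2}w\,\fLap w]=\lim_{h\to0}\tfrac1h\big(E|w|^p-E[|w(x)|^{p-2}w(x)P_hw(x)]\big)$. Because $p_h$ is even and $(w(x),w(y))\=(w(y),w(x))$ for all $x,y$, the symmetrisation argument of Proposition~\ref{liskevich-semenov} yields
\begin{equation*}
E|w|^p-E\big[|w(x)|^{p-2}w(x)\,P_hw(x)\big] = \tfrac12\int_\X p_h(x-y)\,E\big[\big(|w(x)|^{p-2}w(x)-|w(y)|^{p-2}w(y)\big)\big(w(x)-w(y)\big)\big]\,dy,
\end{equation*}
and the elementary inequality quoted in the proof of Proposition~\ref{liskevich-semenov}, applied with the exponents $\tfrac{2(p-1)}{p}$ and $\tfrac2p$ (summing to $2$) and $\theta_i=\sgn$, bounds the integrand below by $\tfrac{4(p-1)}{p^2}\big(|w(x)|^{p/2}-|w(y)|^{p/2}\big)^2$.

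To finish, put $v=|w|^{p/2}\ge0$. Since $w$ is smooth with gradient bounded uniformly in $\omega$ and $r\mapsto|r|^{p/2}$ is Lipschitz on the bounded range of $w$, Lemma~\ref{derivative-estimate} gives $v\in\Iso^1_2\subset\Iso^s_2$ (Proposition~\ref{iso-c1}, using $s\le1$), so its Dirichlet form is finite. The same symmetrisation applied to the nonnegative, isometry-invariant field $v$ identifies $\tfrac12\int_\X p_h(x-y)E\big(v(x)-v(y)\big)^2\,dy$ with $E|v|^2-E[v(x)P_hv(x)]$, whence $h^{-1}$ times it tends to $E\big((-\Delta)^{s/2}v\big)^2$ by Proposition~\ref{semigroup-limit}. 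Combining the three displays gives
\begin{equation*}
E\big[|w|^{p-2}w\,\fLap w\big]\ \ge\ \frac{4(p-1)}{p^2}\,E\big|(-\Delta)^{s/2}|w|^{p/2}\big|^2,
\end{equation*}
and multiplying by $-p$ yields the claim. I expect the main obstacle to be the first step: making the termwise differentiation and the exchange with $E$ rigorous rests on the uniform-in-$\omega$ smoothing bounds for $u^\omega$, which must be extracted from the integral equation of Corollary~\ref{uniqueness} together with the kernel estimates of Section~\ref{linear-equation}; once the ``eliminate the $x$ variable'' symmetrisation is in place, the remaining computation is routine.
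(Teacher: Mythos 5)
Your second and third steps are exactly the paper's argument: the symmetrisation $E|w|^p-E\big[|w(x)|^{p-2}w(x)P_hw(x)\big]=\tfrac12\int_\X p_h(x-y)E[\dots]\,dy$ together with the elementary inequality with exponents $\tfrac{2(p-1)}{p}+\tfrac2p=2$ is precisely Proposition~\ref{liskevich-semenov}, and the identification of $\lim_h h^{-1}\big(E|v|^2-E[vP_hv]\big)$ with $E\big((-\Delta)^{s/2}v\big)^2$ for $v=|u|^{p/2}\in\Iso^1_2\subset\Iso^s_2$ (via Corollary~\ref{lipschitz} on the bounded range of $u$) is Proposition~\ref{semigroup-limit}, as in the paper. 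Note also that what you derive is the bound with $|u|^{p/2}$ inside the Dirichlet term; this is indeed what the paper's own proof yields (the exponent $p$ in the printed statement is best read as $p/2$, consistent with $|w|=|u|^{p/2}$).

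Where you diverge from the paper is the first step, and that is where your argument has a genuine gap. You differentiate the classical equation pathwise, interchange $\dt$ with $E$, and kill the convection term by writing $|u|^{p-2}u\,\grad f(u)=\grad H(u)$ and invoking homogeneity. The homogeneity step itself is sound (it is the same mechanism as Lemma~\ref{derivative-zero}), but the interchange and the identification $E[|u|^{p-2}u\,\fLap u]=\lim_h h^{-1}\big(E|u|^p-E[|u|^{p-2}u\,P_hu]\big)$ both rest on your claim that $\grad u^\omega$, $\fLap u^\omega$ and $\dt u^\omega$ are bounded \emph{uniformly in $\omega$} on slabs $[t_0,T]\times\X$, and this is asserted rather than proved. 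It does not follow by simply ``feeding'' the $L^\infty$ bound into the estimates of Section~\ref{linear-equation}: iterating Lemma~\ref{semigroup-estimate} in the Duhamel formula to reach second-order quantities such as $\fLap u^\omega$ produces a factor $(t-\tau)^{-1/s}$, which is not integrable for $s\leq1$, so one would have to redistribute derivatives onto $f(u^\omega)$ and run a singular Gronwall/bootstrap argument, or extract from \cite{MR2019032} regularity estimates depending only on $\|u_0^\omega\|_{L^\infty(\X,dx)}\leq\|u_0\|_\infty$; neither is done. The paper's proof is organised precisely to avoid this: it works with increments of the mild formulation (Corollary~\ref{uniqueness} and Lemma~\ref{infinitesimal-linearisation}), so the nonlinear term disappears through Lemma~\ref{derivative-zero} and the dissipative term appears directly as $E(P_hu-u)|u|^{p-1}\sgn u$ without ever needing pointwise control of $\fLap u^\omega$ or $\dt u^\omega$; pathwise smoothness (Lemmas~\ref{classical-solutions} and~\ref{classical-solutions-equiv}) is used only to identify $\lim_h E\frac{u(t+h)-u(t)}{h}|u|^{p-1}\sgn u(t)$ with $\frac1p\dt E|u|^p$. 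Your route is likely repairable, but as written the uniform smoothing bounds are the missing piece, and adopting the increment/mild-formulation device would let you dispense with them entirely.
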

    \begin{proof}
     From Lemma~\ref{infinitesimal-linearisation} we have
     \begin{multline*}
      E\frac{u(t+h)-u(t)}{h}|u|^{p-1}(t)\sgn u(t) \\= E\frac{P_hu(t)-u(t)}{h}|u|^{p-1}(t)\sgn u(t) + R(h),
      \end{multline*}
      where $\lim_{h\to0}R(h) = 0$.
      Let $w = |u|^{\frac{p-2}{2}}u$. Then
      \begin{equation*}
       P_hu\big(|u|^{p-1}\sgn u\big) = P_h \big(|w|^{2/p}\sgn w\big)\big(|w|^\frac{2(p-1)}{p}\sgn w\big)
      \end{equation*}
      and it follows from Proposition~\ref{liskevich-semenov} that
      \begin{equation*}
       E(P_hu(t)-u(t))|u|^{p-1}(t)\sgn u(t) \leq 4\frac{p-1}{p^2}E(P_h |w| -|w|)|w|.
      \end{equation*}
      Because $u(t)\in\Iso^1_2\cap\Iso_\infty$ for $t>0$ then by Corollary~\ref{lipschitz} $w\in\Iso^1_2\subset\Iso^s_2$ and we have
      \begin{equation*}
        \lim_{h\to0}E\frac{(P_h |w|-|w|)|w|}{h} = -E\big|(-\Delta)^{s/2}|w|\big|^2.
      \end{equation*}
      On the other hand, it follows from Lemmas~\ref{classical-solutions} and~\ref{classical-solutions-equiv}
      that $u(t,x,\omega)=u^\omega(t,x)$ for almost every $\omega\in\Omega$
      and $u^\omega\in C^\infty((0,\infty)\times\X)$, therefore
      \begin{equation*}
      \lim_{h\to0}E\frac{u(t+h)-u(t)}{h}|u|^{p-1}(t)\sgn u(t) = \frac{1}{p}\dt E|u|^p,
      \end{equation*}
      which yields the result.
    \end{proof}
    \begin{theorem}\label{rosenblatt-lp}
     If $u$ is the Picard solution to~\eqref{burgers} with $u_0\in\Iso_p$ and $f\in C^\infty(\R)$ then
     \begin{equation*}
      E|u(t)|^p \leq E|u_0|^p
     \end{equation*}
      for every $2\leq p\leq\infty$ and every $t\geq0$.
    \end{theorem}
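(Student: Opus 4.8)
The plan is to establish the estimate first for bounded initial data, where the regularity afforded by Lemma~\ref{cut-off-reg} is available, then to obtain the general case $2\le p<\infty$ by an approximation in the $\Iso_p$-norm, and finally to recover $p=\infty$ by letting $p\to\infty$. For the first step, suppose $2\le p<\infty$ and $u_0\in\Iso_\infty$, and put $\varphi(t)=E|u(t)|^p$, which by Remark~\ref{independent} does not depend on the space variable. Since the Picard solution belongs to $\JIso_{K,p}\subset C(\T,\Iso_p)$, the map $t\mapsto\|u(t)\|_p=\varphi(t)^{1/p}$ is continuous on $[0,\infty)$, and $\varphi(0)=E|u_0|^p$ because $u(0)\=u_0$. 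For $t>0$, Lemma~\ref{cut-off-reg}, whose hypotheses $u_0\in\Iso_\infty$ and $f\in C^\infty(\R)$ are precisely those assumed here, gives $\varphi'(t)\le0$; a continuous function on $[0,\infty)$ with nonpositive derivative on $(0,\infty)$ is nonincreasing, so $\varphi(t)\le\varphi(0)=E|u_0|^p$.

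For a general $u_0\in\Iso_p$ with $2\le p<\infty$ I would approximate it in $\Iso_p$ by a sequence $u_0^{(k)}\in\Iso_\infty$ with $E|u_0^{(k)}|^p\le E|u_0|^p$. A convenient choice is $u_0^{(k)}=P_{\varepsilon_k}\big(g_{M_k}(u_0)\big)$, where $g_M(y)=(-M)\vee(y\wedge M)$ and $M_k\to\infty$, $\varepsilon_k\to0$ suitably: post-composing with the bounded, $1$-Lipschitz map $g_M$ keeps $u_0$ in every $\Iso_q$, $q<\infty$ (its increments are uniformly bounded, so $L^p$-continuity in $x$ upgrades to $L^q$-continuity), while the subsequent smoothing by $P_\varepsilon$ produces an element of $\Iso_\infty$: it is bounded by $M$ in $L^\infty(\Omega)$, being a spatial average, and it is continuous into $L^\infty(\Omega)$ because $x\mapsto p_\varepsilon(x-\cdot)$ is continuous into $L^1(\X)$. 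Using $|g_M(u_0)|\le|u_0|$, the homogeneity of Remark~\ref{independent} and the contractivity of $P_\varepsilon$ from Lemma~\ref{kernel-linfty} one gets $E|u_0^{(k)}|^p\le E|u_0|^p$, and $\|u_0^{(k)}-u_0\|_p\to0$ after bounding it by $\|g_{M_k}(u_0)-u_0\|_p+\|P_{\varepsilon_k}u_0-u_0\|_p$, the first term vanishing as $M_k\to\infty$ by dominated convergence and the second as $\varepsilon_k\to0$ since $\{p_\varepsilon\}$ is an approximate identity and $y\mapsto\|u_0(\cdot+y)-u_0(\cdot)\|_p=\|u_0(y)-u_0(0)\|_p$ is bounded, continuous and zero at the origin.

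Now let $u^{(k)}$ be the Picard solution with datum $u_0^{(k)}$. Since $u_0^{(k)}$ is a fixed isometry-equivariant functional of $u_0$ we have $u_0^{(k)}\sim u_0$, and because the operator $F$ is isometry-equivariant (as in the proof of Lemma~\ref{Fimage}) the Picard iterates satisfy $u_n^{(k)}\sim u_n$ at every stage; choosing $K\ge K_0$ so that $\theta:=c_sLK^{-1+\frac1{2s}}\Gamma(1-\tfrac1{2s})<1$ and carrying Lemma~\ref{contraction} through the iterations yields
\[
 \K{u^{(k)}-u}{p}\le(1-\theta)^{-1}\,\|u_0^{(k)}-u_0\|_p ,
\]
which tends to $0$ as $k\to\infty$. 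Hence $E|u^{(k)}(t)|^p\to E|u(t)|^p$ for every $t\ge0$, and combining this with the bounded-data case and $E|u_0^{(k)}|^p\le E|u_0|^p$ gives $E|u(t)|^p\le E|u_0|^p$. For $p=\infty$ the datum $u_0$ already lies in every $\Iso_q$, $q<\infty$, the Picard solution does not depend on the exponent used to define the ambient space, and the bound $\|u(t)\|_q\le\|u_0\|_q$ just proved becomes $\|u(t)\|_\infty\le\|u_0\|_\infty$ upon letting $q\to\infty$ on the probability space.

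The step I expect to be the main obstacle is the approximation. Because $\Iso_\infty$ requires continuity into $L^\infty(\Omega)$, a plain truncation $g_M(u_0)$ need not lie in $\Iso_\infty$ --- for a Gaussian field it is in fact nowhere continuous in $x$ with respect to the $L^\infty(\Omega)$-norm --- so the additional spatial regularisation is genuinely necessary, and one must then verify that this double approximation still converges in the $\Iso_p$-norm and that the non-transitive relation $\sim$ interacts well enough with the Picard scheme for Lemma~\ref{contraction} to be applicable between $u^{(k)}$ and $u$.
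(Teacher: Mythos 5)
Your proof is correct and follows the same overall strategy as the paper: approximate the initial datum by bounded fields so that Lemma~\ref{cut-off-reg} applies, integrate the resulting differential inequality, and transfer the bound to the original Picard solution via the contraction estimate of Lemma~\ref{contraction}, with the joint invariance $u^{(k)}\sim u$ propagated along the Picard iterations. The one genuine difference is your choice of approximating sequence, and it is a substantive one: the paper takes the bare truncation $u_0^n=h_n(u_0)$, $h_n(x)=\min\{|x|,n\}\sgn(x)$, and asserts $u_0^n\in\Iso_\infty$, whereas, as you observe, membership in $\Iso_\infty$ requires continuity of $x\mapsto u_0^n(x)$ in the $L^\infty(\Omega)$-norm, which a truncated field need not possess (for a nondegenerate stationary Gaussian field one has $\|h_n(u_0(x))-h_n(u_0(y))\|_{L^\infty(\Omega)}=2n$ for all $x\neq y$). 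Your additional mollification $u_0^{(k)}=P_{\varepsilon_k}\big(g_{M_k}(u_0)\big)$ restores $L^\infty(\Omega)$-continuity while preserving boundedness, isometry-invariance (the kernel is radial, cf.\ Lemma~\ref{jiso}), the moment bound $E|u_0^{(k)}|^p\le E|u_0|^p$ (Jensen together with Remark~\ref{independent} and Lemma~\ref{kernel-linfty}), and convergence $\|u_0^{(k)}-u_0\|_p\to0$; so your argument in fact repairs a step the paper leaves unjustified, at the modest cost of a slightly longer verification. You also make explicit two points the paper passes over silently: the integration of the differential inequality of Lemma~\ref{cut-off-reg}, using continuity of $t\mapsto E|u(t)|^p$ down to $t=0$, and the case $p=\infty$, recovered by letting the finite moment exponent tend to infinity on the probability space.
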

    \begin{proof}
     We consider the sequence $u_0^n = h_n(u_0)$, where $h_n$ are cut-off functions $h_n(x) = \min\{|x|,n\}\sgn(x)$ and the sequence $u_n$ of solutions to problems
      \begin{equation*}
      \left\{
      \begin{aligned}
        &\dt u + \fLap u = \grad f(u)\quad&\text{on $\T\times\X$}, \\
        &u(0) \= u_0^n\quad&\text{on $\X$}.
      \end{aligned}
      \right.
    \end{equation*}
     as constructed in Theorem~\ref{existence}. Then $u_0^n\in\Iso_\infty$ and it follows from Lemma~\ref{cut-off-reg}
     that 
     \begin{equation*}
      E|u_n(t)|^p \leq E|u_0^n|^p
     \end{equation*}
     for every $n\geq1$, $t\geq0$ and $p\geq2$.
     By Lemma~\ref{contraction} we know that
     \begin{equation*}
     \lim_{n\to\infty}\|u_n-u\|_p = 0
     \end{equation*}
     and the result follows.
    \end{proof}
  \end{subsection}
\end{section}
\begin{section}{Nonlinearity with Polynomial Growth}\label{general}
    We consider the following initial value problem
     \begin{equation}\label{polyburgers}
      \left\{
      \begin{aligned}
        &\dt u + \fLap u = \grad f(u)\quad&\text{on $\T\times\X$}, \\
        &u(0) \= u_0\quad&\text{on $\X$}.
      \end{aligned}
      \right.
    \end{equation}
    Here we assume $s\in(\frac{1}{2},1]$, $u_0\in\bigcap_{p\geq2}\Iso_p$ and $f\in C^\infty(\R)$, $f(0)=0$ and $|f(u)-f(v)|\leq C|u-v|\big(|u|^q+|v|^q\big)$
    for some constants $C>0$ and $q\geq 1$.
    By referring to~problem~\eqref{polyburgers} we also quietly include these assumptions.
    \begin{subsection}{Existence of solutions}
      \begin{definition}\label{polydefinition}
          Given $u\in\bigcap_{p\geq2}\JIso_{K,p}$ for some $K\geq 0$, we define the following nonlinear operator
  \begin{equation*}
    F(u)(t) = P_{t}u(0) + \int_0^{t} \grad P_{t-\tau} f(u(\tau))\,d\tau.
  \end{equation*} 
   Let $u_0\in\bigcap_{p\geq2}\Iso_p$. For $K\geq0$ we say that $u\in\bigcap_{p\geq1}\JIso_{K,p}$ is a solution to problem~\eqref{polyburgers} if $F(u)=u$ and $u_0\=u(0)$.
  \end{definition}
  In order to prove the existence of solutions we need to consider a sequence of approximations
  defined in the following way. Take $u_0^n = h_n(u_0)$, where $h_n$ are cut-off functions $h_n(x) = \min\{|x|,n\}\sgn(x)$ and define the sequence of Picard solutions $u^n$ to problems
  \begin{equation}\label{cutoff}
      \left\{
      \begin{aligned}
        &\dt u^n + \fLap u^n = \grad f(h_n(u^n))\quad&\text{on $\T\times\X$}, \\
        &u^n(0) \= u_0^n\quad&\text{on $\X$}.
      \end{aligned}
      \right.
    \end{equation}
    The function $f(h_n(x))$ is Lipschitz and thus, because of Theorem~\ref{existence}, such a sequence exists.
    We call it the sequence of \emph{approximative solutions} throughout this section.
    \begin{remark}
     For every $n$, the solution $u^n$ of problem~\eqref{cutoff} is by definition a fixed point of the operator
     \begin{equation}\label{noperator}
      F_n(u)(t) = P_{t}u(0) + \int_0^{t} \grad P_{t-\tau} f(h_n(u(t)))\,d\tau.
     \end{equation}
    \end{remark}
     In the following proposition we show that the approximative solutions $u^n$ are 
     not only solutions to problems~\eqref{cutoff} but also solutions
     to problem~\eqref{polyburgers}, with the cut-off initial conditions $u_0^n$, respectively.
     In other words, it turns out we do not need to cut-off the function $f$, once we have constructed
     solutions $u^n$ as limits of Picard iterations linked to problems~\eqref{cutoff}.
    \begin{proposition}
     Let $u^n$ be the sequence of approximative solutions.
     Then $u^n$ are fixed points of the operator $F$.
    \end{proposition}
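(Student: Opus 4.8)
The plan is to verify directly that $F(u^n)=u^n$. Since $u^n$ is by construction the Picard solution of~\eqref{cutoff}, it is a fixed point of the operator $F_n$ from~\eqref{noperator}, i.e.
\begin{equation*}
 u^n(t)=P_tu_0^n+\int_0^t\grad P_{t-\tau}f\big(h_n(u^n(\tau))\big)\,d\tau,\qquad u^n(0)=u_0^n .
\end{equation*}
Comparing with Definition~\ref{polydefinition}, it is enough to show that $f\big(h_n(u^n(\tau))\big)=f(u^n(\tau))$ for every $\tau\ge0$, and for that it suffices to prove the pointwise bound $\|u^n(t)\|_\infty\le n$ for all $t\ge0$: then $h_n$ acts as the identity on $u^n(t)$ and the truncation disappears.

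The delicate step is this $L^\infty$ bound, the obstacle being that neither $f$ (not Lipschitz in the present section) nor $f\circ h_n$ (not $C^\infty$) satisfies the hypotheses of Theorem~\ref{rosenblatt-lp}. To circumvent this I would compare $u^n$ with an auxiliary solution: pick $g_n\in C^\infty(\R)$ that is globally Lipschitz, satisfies $g_n(0)=0$, and coincides with $f$ on $[-n,n]$, and let $v^n$ be the Picard solution of $\dt v+\fLap v=\grad g_n(v)$ with $v(0)\=u_0^n$ (which exists by Theorem~\ref{existence}). Now $g_n$ is simultaneously Lipschitz and smooth, $u_0^n\in\Iso_\infty\subset\Iso_p$ for every $p$, and $|u_0^n|\le n$, so Theorem~\ref{rosenblatt-lp} yields $\|v^n(t)\|_\infty\le\|u_0^n\|_\infty\le n$ for every $t\ge0$ (equivalently $E|v^n(t)|^p\le E|u_0^n|^p\le n^p$ for every finite $p$, then $p\to\infty$).

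It remains to identify $v^n$ with $u^n$. Since $\|v^n(\tau)\|_\infty\le n$, the field $v^n(\tau)$ takes values in $[-n,n]$, where $g_n$, $f$ and $f\circ h_n$ all agree; hence $g_n(v^n(\tau))=f\big(h_n(v^n(\tau))\big)$, so $v^n$ is also a fixed point of $F_n$, with $v^n(0)=u_0^n=u^n(0)$. Moreover $u^n\sim v^n$, because both arise as limits of Picard iterations started from $P_tu_0^n$, the relation $\sim$ is preserved under the iteration operators (their linear parts commute with isometries as in the proof of Lemma~\ref{Fimage}, and $g_n(\cdot)\sim f(h_n(\cdot))$ along $\sim$-related fields) and under the $\Lp$-limit. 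Lemma~\ref{contraction}, applied to $u^n$ and $v^n$ with $K$ large enough and with the Lipschitz constant of $f\circ h_n$, then gives $\K{u^n-v^n}{p}\le\theta\K{u^n-v^n}{p}$ with $\theta<1$, whence $u^n=v^n$. Therefore $\|u^n(t)\|_\infty\le n$, so $f\big(h_n(u^n(\tau))\big)=f(u^n(\tau))$ and $F(u^n)=F_n(u^n)=u^n$.

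The principal difficulty, as indicated, is the uniform bound $\|u^n(t)\|_\infty\le n$: the maximum-principle-type estimate of Theorem~\ref{rosenblatt-lp} is not directly available for the non-smooth nonlinearity $f\circ h_n$, so it has to be transported from a smooth Lipschitz interpolant $g_n$, and this transport in turn relies on the conditional uniqueness coming from the contraction estimate of Lemma~\ref{contraction} and on the fact that $u^n\sim v^n$.
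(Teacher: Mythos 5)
Your argument is correct and follows the same skeleton as the paper's proof (reduce everything to the bound $\|u^n(t)\|_\infty\le n$, so that $h_n$ acts as the identity on $u^n$ and $F(u^n)=F_n(u^n)=u^n$), but it takes a genuinely different route in the key step. The paper simply invokes Theorem~\ref{rosenblatt-lp} for $u^n$ itself, even though the nonlinearity in problem~\eqref{cutoff} is $f\circ h_n$, which is Lipschitz but not $C^\infty$, so the smoothness hypothesis of that theorem (used in its proof through Lemma~\ref{classical-solutions} and the classical results of \cite{MR2019032}) is not literally satisfied; implicitly one has to smooth the truncation. You spotted this mismatch and repaired it: you apply Theorem~\ref{rosenblatt-lp} to an auxiliary Picard solution $v^n$ driven by a smooth, globally Lipschitz interpolant $g_n$ that agrees with $f$ on $[-n,n]$, obtain $\|v^n(t)\|_\infty\le n$, observe that then $g_n(v^n)=f(h_n(v^n))$ so that $v^n$ is also a fixed point of $F_n$ with the same initial datum $u_0^n$, and conclude $u^n=v^n$ from Lemma~\ref{contraction} with $K$ large and the Lipschitz constant of $f\circ h_n$. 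What the paper's version buys is brevity, at the price of quietly stretching the hypotheses of Theorem~\ref{rosenblatt-lp}; what yours buys is that the moment estimate is only ever used for a nonlinearity that is both smooth and Lipschitz, at the price of the interpolant construction and a conditional uniqueness step. The thinnest point of your argument is the claim $u^n\sim v^n$: it rests on propagating the joint invariance $(u_k,v_k)\=(\phi u_k,\phi v_k)$ along the two Picard iterations and passing to the $\Lp$-limit. This is asserted at roughly the same level of detail as analogous statements in the paper (for instance the relation $u_n\sim u_m$ used in Lemma~\ref{l1}), so it is acceptable here, but the induction over the iterates deserves to be written out explicitly if this route is adopted.
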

    \begin{proof}
     It follows from Theorem~\ref{rosenblatt-lp} that $\|u^n(t)\|_\infty \leq \|u_0^n\|_\infty\leq n$ for every $n\geq1$ and $t\geq0$. Therefore $f(h_n(u^n(t)) = f(u^n(t))$.
     The Picard solution $u^n$ is a fixed point of the operator $F_n$ given by \eqref{noperator}
     and hence
     $u^n = F_n(u^n)=F(u^n)$.
    \end{proof}
    \begin{lemma}\label{l1}
        The sequence of approximative solutions is convergent in $\JIso_{K,p}$ for every $p\geq2$ and every $K>0$.
    \end{lemma}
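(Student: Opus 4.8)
The plan is to prove that $\{u^n\}$ is a Cauchy sequence. Fix $p\ge 2$. I first record a reduction. By Theorem~\ref{rosenblatt-lp} every approximative solution satisfies $\|u^n(t)\|_r\le\|u_0^n\|_r\le\|u_0\|_r$ for all $r\ge 2$ and $t\ge 0$, so for any $K,T>0$,
\begin{equation*}
 \viii u^n-u^m\viii_{K,p}\le\sup_{0\le t\le T}\|u^n(t)-u^m(t)\|_p+2e^{-TK}\|u_0\|_p ,
\end{equation*}
and the last term is made arbitrarily small, uniformly in $n,m$, by choosing $T$ large. Hence it suffices to show $\sup_{0\le t\le T}\|u^n(t)-u^m(t)\|_p\to 0$ as $n,m\to\infty$ for each fixed $T>0$; this automatically yields convergence in $\JIso_{K,p}$ for \emph{every} $K>0$ simultaneously.

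For $n\le m$ put $w:=u^n-u^m$. Since each $u^n$ is a fixed point of $F$ (the Proposition preceding this lemma) and all the $u^n$ are built from $u_0$ by operations commuting with isometries — so that $u^n\sim u^m$, $f(u^n)\sim f(u^m)$, and the difference of the Duhamel terms equals $\grad P_{t-\tau}\bigl(f(u^n(\tau))-f(u^m(\tau))\bigr)$ — we get
\begin{equation*}
 w(t)=P_t(u_0^n-u_0^m)+\int_0^t \grad P_{t-\tau}\bigl(f(u^n(\tau))-f(u^m(\tau))\bigr)\,d\tau ,
\end{equation*}
and moreover $w(t)\in\Iso_\infty$ because $\|u^n(t)\|_{\Iso_\infty}\le n$, $\|u^m(t)\|_{\Iso_\infty}\le m$ by Theorem~\ref{rosenblatt-lp}. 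Writing $f(u^n)-f(u^m)=a\,w$ with $a(\tau)=\int_0^1 f'\bigl(u^m(\tau)+\theta w(\tau)\bigr)\,d\theta$, the field $w$ solves the linear advection–diffusion equation $\dt w+\fLap w=\grad(a w)$, $w(0)\=u_0^n-u_0^m$. I would then run the moment computation of Lemma~\ref{cut-off-reg} on this equation — Proposition~\ref{liskevich-semenov} applies to the $\fLap$-term since $w(t)\in\Iso_\infty$ — discard the resulting nonpositive dissipation, and treat the transport term through the homogeneity identity underlying Lemma~\ref{derivative-zero}, which gives $E\,\grad(a w)\,|w|^{p-1}\sgn w=\tfrac{p-1}{p}\,E(\grad a)\,|w|^p$. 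This produces the Gronwall inequality
\begin{equation*}
 \dt E|w(t)|^p\le(p-1)\,\|\grad a(t)\|_{\Iso_\infty}\,E|w(t)|^p ,
\end{equation*}
hence $\|w(t)\|_p^p\le\|u_0^n-u_0^m\|_p^p\,\exp\!\Bigl((p-1)\int_0^t\|\grad a(\tau)\|_{\Iso_\infty}\,d\tau\Bigr)$.

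Two inputs then remain. The first, $\|u_0^n-u_0^m\|_p\to 0$, is immediate: for $n\le m$ one has $|h_n(u_0)-h_m(u_0)|\le|u_0|\,\mathbbm 1_{\{|u_0|>n\}}$, so by homogeneity and dominated convergence $\|u_0^n-u_0^m\|_p\le\bigl(E|u_0|^p\mathbbm 1_{\{|u_0|>n\}}\bigr)^{1/p}\to 0$ since $u_0\in\Iso_p$. The second — which I expect to be the genuine obstacle — is a bound on $\int_0^T\|\grad a(\tau)\|_{\Iso_\infty}\,d\tau$ strong enough relative to the decay of $\|u_0^n-u_0^m\|_p$: since $f''$ has polynomial growth and $\|u^n(\tau)\|_{\Iso_\infty}\le n$, one has $\|\grad a(\tau)\|_{\Iso_\infty}\lesssim\bigl(1+n^{q-1}+m^{q-1}\bigr)\bigl(\|\grad u^n(\tau)\|_{\Iso_\infty}+\|\grad u^m(\tau)\|_{\Iso_\infty}\bigr)$, so one needs integrable-in-time control of $\|\grad u^n(\tau)\|_{\Iso_\infty}$, to be obtained from the smoothing estimates of Section~\ref{linear-equation} — notably $\|\grad P_tu_0^n\|_r\le c_st^{-1/2s}\|u_0\|_r$ (Lemma~\ref{semigroup-estimate}) and Lemma~\ref{linear-regularity1} — applied to the Duhamel representation of $\grad u^n$, together with the $n$-uniform a~priori bounds of Theorem~\ref{rosenblatt-lp}. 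Once this control is secured, feeding it back into the Gronwall bound gives $\sup_{0\le t\le T}\|u^n(t)-u^m(t)\|_p\to 0$, and by the first paragraph $\{u^n\}$ converges in $\JIso_{K,p}$ for every $p\ge 2$ and every $K>0$.
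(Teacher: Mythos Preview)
Your approach has a genuine gap, and it stems from not exploiting Lemma~\ref{derivative-zero} fully. You correctly reduce the transport contribution to
\[
E\,\grad(aw)\,|w|^{p-1}\sgn w=\tfrac{p-1}{p}\,E(\grad a)\,|w|^p,
\]
but then set out to control $\|\grad a\|_{\Iso_\infty}$ via Gronwall. This is unnecessary and, as you yourself flag, leads to an unresolved obstacle: the bound $\|\grad a\|_{\Iso_\infty}\lesssim(1+m^{q-1})(\|\grad u^n\|_{\Iso_\infty}+\|\grad u^m\|_{\Iso_\infty})$ produces an exponential factor $\exp\bigl(C\,m^{q-1}T\bigr)$ in the Gronwall estimate, and there is no reason the tail decay of $\|u_0^n-u_0^m\|_p$ should beat this for a general $u_0\in\bigcap_{p\ge 2}\Iso_p$. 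The uniform-in-$n$ $\Iso_\infty$ gradient bounds you appeal to are also not available in the paper.

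The point you are missing is that the right-hand side of your own identity is already zero. Since $a=\int_0^1 f'(u^m+\theta w)\,d\theta$ and $|w|^p$ are both measurable functions of the pair $(u^n,u^m)$, and $(u^n,u^m)\=(\phi u^n,\phi u^m)$ for every isometry $\phi$, we have $a\sim |w|^p$ and hence $E(\grad a)\,|w|^p=0$ by Lemma~\ref{derivative-zero}. Equivalently and more directly (this is how the paper argues), $f(u^n)-f(u^m)\sim |w|^{p-1}\sgn w$, so
\[
E\,\grad\bigl(f(u^n)-f(u^m)\bigr)\,|w|^{p-1}\sgn w=0
\]
outright. No decomposition $aw$, no Gronwall, no gradient bounds are needed. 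Combined with the Kato-type inequality for the diffusion term (which you handle correctly), this gives $\dt E|w(t)|^p\le 0$, hence $E|u^n(t)-u^m(t)|^p\le E|u_0^n-u_0^m|^p$ uniformly in $t$, and the Cauchy property follows immediately from $\|u_0^n-u_0^m\|_p\to 0$.
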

    \begin{proof}
      It follows from Lemma~\ref{classical-solutions-equiv} that $u_n(\omega)$ are also classical solutions for almost
      every $\omega\in\Omega$, therefore we may write
      \begin{equation*}
            \dt(u_n-u_m) = -\fLap (u_n-u_m) + \grad(f(u_n)-f(u_m)).
        \end{equation*}
        Let $g(x)=|x|^{p-1}\sgn(x)$. Because
        \begin{equation*}
        f(u_n)-f(u_m) = \bar{f}(u_n,u_m)\sim \bar{g}(u_n,u_m) = g(u_n-u_m),
        \end{equation*}
        it follows from Lemma~\ref{derivative-zero} that
        \begin{equation*}
            E \grad (f(u_n)-f(u_m))|u_n-u_m|^{p-1}\sgn(u_n-u_m) =  0,
        \end{equation*}
     hence
       \begin{equation*}
            E\dt(u_n-u_m) g(u_n-u_m) = -E\fLap (u_n-u_m)g(u_n-u_m).
	\end{equation*}
	Notice that $g$ is a convex function, therefore
	\begin{equation*}
	 \fLap g(u_n(\omega)-u_m(\omega))\leq\fLap (u_n(\omega)-u_m(\omega))g(u_n(\omega)-u_m(\omega)) 
	\end{equation*}
	for almost every $\omega\in\Omega$ (see~\cite[Theorem 1.1]{MR3642734}), 
	and because of the regularity of classical solutions
	(see Theorem~\ref{classical-solutions}) and Remark~\ref{flap-prop}, we have
	\begin{equation*}
            \dt E|u_n-u_m|^p = -E\fLap (u_n-u_m)g(u_n-u_m) \leq -E\fLap g(u_n-u_m)=0.
	\end{equation*}
        It follows that
        \begin{equation*}
            E|u_n(t)-u_m(t)|^p \leq E|u_0^n-u_0^m|^p
        \end{equation*}
        for every $t\geq 0$. However, 
        \begin{equation*}
            \lim_{n,m\to\infty} E|u_0^n-u_0^m| = 0,
        \end{equation*}
        thus 
        \begin{equation*}
         \lim_{n,m\to\infty} \sup_{t\geq0} E|u_n(t)-u_m(t)|^p = 0,
        \end{equation*}
        which shows that $u^n$ is a Cauchy sequence, and therefore converges, in $\JIso_{K,p}$.
    \end{proof}
    \begin{theorem}\label{main}
        There exists 
        a solution $u$ to problem~\eqref{polyburgers} such that $E|u(t)|^p\leq E|u_0|^p$ for every $t\geq0$ and $p\geq2$.
    \end{theorem}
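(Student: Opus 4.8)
The plan is to take $u$ to be the limit of the sequence of approximative solutions $\{u^n\}$, which exists in $\JIso_{K,p}$ for every $p\ge2$ and every $K>0$ by Lemma~\ref{l1}, and then to verify that this limit satisfies Definition~\ref{polydefinition} together with the stated moment bound. First I would observe that $u$ actually lies in $\bigcap_{p\ge1}\JIso_{K,p}$ for every $K>0$: on a probability space $\|v\|_p\le\|v\|_q$ whenever $p\le q$, so $\JIso_{K,q}$ embeds continuously into $\JIso_{K,p}$, the limit is the same element $u$ for all $p$, and each $\JIso_{K,p}$ is complete. I would also record an a priori bound: as noted just before Lemma~\ref{l1}, every $u^n$ is a fixed point of $F$ with the genuine smooth nonlinearity $f$ and bounded initial datum $u_0^n\in\Iso_\infty$, so Theorem~\ref{rosenblatt-lp} gives $E|u^n(t)|^m\le E|u_0^n|^m\le E|u_0|^m$ for all $m\ge2$, $t\ge0$ and $n$; since $u_0\in\bigcap_{p\ge2}\Iso_p$, this is a bound on $\{u^n\}$ that is uniform in both $n$ and $t$ in every space $\Iso_m$.

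The core of the argument is the passage to the limit in the identity $u^n=F(u^n)$. The linear part causes no trouble: $u^n(0)=u_0^n\to u_0$ in every $\Iso_p$ and the operators $P_t$ are contractions (Lemma~\ref{kernel-linfty}). For the nonlinear part I would combine Lemma~\ref{semigroup-estimate} with the polynomial growth hypothesis $|f(a)-f(b)|\le C|a-b|(|a|^q+|b|^q)$ and H\"older's inequality with conjugate exponents $r,r'$ to obtain, for any $t>0$,
\begin{equation*}
 \|f(u^n(t))-f(u(t))\|_p\le C\,\|u^n(t)-u(t)\|_{pr}\,\big(\|u^n(t)\|_{pqr'}^q+\|u(t)\|_{pqr'}^q\big),
\end{equation*}
where the last factor is bounded uniformly in $n$ and $t$ by the a priori estimates above (note $pr\ge2$ and $pqr'\ge2$). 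Taking the Bielecki norm yields $\K{f(u^n)-f(u)}{p}\le C'\,\K{u^n-u}{pr}\to0$ by Lemma~\ref{l1}, and then the very same manipulation as in the proofs of Lemmas~\ref{Fimage} and~\ref{contraction} (Lemma~\ref{semigroup-estimate} together with the $\Gamma$-function estimate~\eqref{gamma}) gives $\K{F(u^n)-F(u)}{p}\to0$. Since also $F(u^n)=u^n\to u$, uniqueness of limits forces $F(u)=u$; and $u^n(0)\to u_0$ together with $u^n(0)\to u(0)$ in $\Lp$ gives $u(0)\=u_0$. Hence $u$ solves~\eqref{polyburgers} in the sense of Definition~\ref{polydefinition}.

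It remains to pass the moment bound to the limit. Fix $t\ge0$ and $p\ge2$. Convergence in $\JIso_{K,p}$ gives $\|u^n(t)-u(t)\|_p\to0$, so along a subsequence $u^{n_k}(t,x)\to u(t,x)$ $P$-almost surely; by Fatou's lemma and Remark~\ref{independent},
\begin{equation*}
 E|u(t)|^p\le\liminf_{k\to\infty}E|u^{n_k}(t)|^p\le E|u_0|^p.
\end{equation*}
This is precisely the asserted estimate, and the construction is complete.

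The step I expect to be the genuine obstacle is the passage to the limit in the nonlinear term: here the Lipschitz machinery of Section~\ref{sec-lipschitz} is no longer available, and one must exploit that the moment estimate of Theorem~\ref{rosenblatt-lp} holds for the approximations \emph{uniformly in $n$} in order to control the factor $\|u^n(t)\|_{pqr'}^q$ after splitting $f(u^n)-f(u)$ by H\"older's inequality. Everything else is a routine repetition of the contraction-type estimates already established in Sections~\ref{linear-equation}--\ref{apriori}.
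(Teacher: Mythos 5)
Your proposal is correct and follows essentially the same route as the paper: take $u$ to be the limit of the approximative solutions from Lemma~\ref{l1}, pass to the limit in the fixed-point identity using the polynomial growth of $f$, the uniform moment bounds of Theorem~\ref{rosenblatt-lp}, Lemma~\ref{semigroup-estimate} and the $\Gamma$-function estimate~\eqref{gamma}, and inherit the estimate $E|u(t)|^p\leq E|u_0|^p$ in the limit. If anything, your treatment of the nonlinear term is slightly more careful than the paper's, since you split $\|f(u^n)-f(u)\|_p$ by H\"older with higher moments (available because $u_0\in\bigcap_{p\geq2}\Iso_p$), whereas the paper states the corresponding bound directly in terms of $\|\cdot\|_2$ norms.
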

    \begin{proof}
        Let $u$ be the limit of the approximative solutions $u^n$ given by Lemma~\ref{l1}.
         The estimates 
         \begin{equation}\label{yet-another-estimate}
          E|u(t)|^p\leq E|u_0|^p
         \end{equation}
         follow from the analogous properties of the
         approximative solutions given in Theorem~\ref{rosenblatt-lp}.
          Consider
          \begin{equation*}
           F(u) = P_{t}u(0) + \int_0^{t} \grad P_{t-\tau} f(u(\tau))\,d\tau.
          \end{equation*}
          By Lemma~\ref{semigroup-estimate} we have
          \begin{multline*}
              \|u^n(t)-F(u)(t)\|_2 \\\leq \|P_{t}(u^n_0-u_0)\|_2 + c_s\int_0^{t} (t-\tau)^{-\frac{1}{2s}} \|f(u^n(\tau))-f(u(\tau))\|_2\,d\tau.
	  \end{multline*}
	  Because we assume $|f(x)-f(y)|\leq C|x-y|\big(|x|^q+|y|^q\big)$, then
          \begin{multline*}
              \|u^n(t)-F(u)(t)\|_2  - \|P_{t}(u^n_0-u_0)\|_2 \\\leq
              c_sC\int_0^{t} (t-\tau)^{-\frac{1}{2s}} \|u^n(\tau)-u(\tau)\|_2\big(\|u^n(\tau)\|_2^q+\|u(\tau)\|_2^q\big)\,d\tau.
          \end{multline*}
         By Theorem~\ref{rosenblatt-lp} and estimate~\eqref{yet-another-estimate} we get
         \begin{multline*}
          \int_0^{t} (t-\tau)^{-\frac{1}{2s}} \|u^n(\tau)-u(\tau)\|_2\big(\|u^n(\tau)\|^q_2+\|u(\tau)\|^q_2\big)\,d\tau\\
          \leq 2\|u_0\|_2^q\int_0^{t} (t-\tau)^{-\frac{1}{2s}} \|u^n(\tau)-u(\tau)\|_2\,d\tau.
         \end{multline*}
         For every $K>0$ (cf. estimates~\eqref{norm-est1},~\eqref{gamma}) we have
         \begin{multline*}
          \sup_{t\geq0} e^{-tK}\int_0^{t} (t-\tau)^{-\frac{1}{2s}} \|u^n(\tau)-u(\tau)\|_2\,d\tau\\\leq
          \Big(\sup_{0<\tau<t}e^{-\tau K}\|u^n(\tau)-u(\tau)\|_2\Big) \int_0^{t} (t-\tau)^{-1/2s} e^{-(t-\tau) K}\,d\tau
          \\\leq  K^{-1+\frac{1}{2s}}\Gamma(1-\tfrac{1}{2s})\K{u^n-u}{2}.
         \end{multline*}
         Finally we obtain
         \begin{equation*}
         \K{u^n-F(u)}{2}\leq c_sCK^{-1+\frac{1}{2s}}\Gamma(1-\tfrac{1}{2s})\K{u^n-u}{2}
         \end{equation*}
         and by Lemma~\ref{l1}
         \begin{equation*}
         \lim_{n\to\infty}\K{u^n-F(u)}{2}=0.
         \end{equation*}
         This of course means that $u=F(u)$ (Lemma~\ref{l1} again) and $u$ is a solution.
      \end{proof}
      \begin{remark}
       As in Remark~\ref{uniqueness-remark}, the solution we have constructed in this section
       may not be unique in the context of Definition~\ref{polydefinition}.
      \end{remark}
    \end{subsection}
    \section*{Acknowledgements}
    The author wishes to thank Grzegorz Karch for his invaluable help in preparing this paper.
    This work was partially supported by the Polish NCN grant No.~2016/23/B/ST1/00434.
\end{section}
\bibliographystyle{amsalpha}
  \bibliography{nonlinear3}
\end{document}